\theoremstyle{plain}
\newtheorem{theorem}{Theorem}[section]
\newtheorem{lemma}[theorem]{Lemma}  
\newtheorem{theorema}{Theorem A\!\!}
\newtheorem{theoremb}{Theorem B\!\!}
\newtheorem{proposition}[theorem]{Proposition}
\newtheorem{corollary}[theorem]{Corollary}
\newcommand{\supp}{\mathop{\mathrm{supp}}\nolimits} 
\newcommand{\sgn}{\mathop{\mathrm{sgn}}\nolimits}
\theoremstyle{definition}
\numberwithin{equation}{section} 
\newtheorem{remark}[theorem]{Remark} 
\newtheorem{aexample}{Example\!\!}
\theoremstyle{remark}
\def\Xint#1{\mathchoice
{\XXint\displaystyle\textstyle{#1}}%
{\XXint\textstyle\scriptstyle{#1}}%
{\XXint\scriptstyle\scriptscriptstyle{#1}}%
{\XXint\scriptscriptstyle\scriptscriptstyle{#1}}%
\!\int}
\def\XXint#1#2#3{{\setbox0=\hbox{$#1{#2#3}{\int}$}
\vcenter{\hbox{$#2#3$}}\kern-.5\wd0}}
\def\dashint{\Xint-}
\title[ Sobolev spaces with non-isotropic dilations
  ]
{Sobolev spaces with non-isotropic dilations and 
 square functions of Marcinkiewicz type}  
\author{Shuichi Sato} 
\begin{document} 

\address{Department of Mathematics,
Faculty of Education, Kanazawa University, Kanazawa 920-1192, Japan}
\email{shuichipm@gmail.com}
\begin{abstract} 
We consider the weighted 
Sobolev spaces associated with non-isotropic dilations 
of Calder\'on-Torchinsky and characterize the spaces 
 by the square functions of Marcinkiewicz type including those defined 
with repeated uses of averaging operation. 
\end{abstract}
  \thanks{2020 {\it Mathematics Subject Classification.\/}
  Primary  42B25; Secondary 46E35. 
  \endgraf
  {\it Key Words and Phrases.} Littlewood-Paley functions,
  function of Marcinkiewicz,  Fourier multipliers, Sobolev spaces.  }
\thanks{The author is partly supported
by Grant-in-Aid for Scientific Research (C) No. 20K03651, Japan Society for the  Promotion of Science.} 
\maketitle 

\section{Introduction} \label{s1} 

Let $B(x,t)$ be a ball in $\Bbb R^n$ with radius $t$ centered at
 $x$ and for $0<\alpha<2$ let 
\begin{equation}\label{1.2}  
V_\alpha(f)(x)=\left(\int_0^\infty\left|f(x)-\dashint_{B(x,t)}f(y)\, dy
\right|^2\frac{dt}{t^{1+2\alpha}} \right)^{1/2},  
\end{equation}  
where $\dashint_{B(x,t)} f(y)\,dy$ denotes    
$|B(x,t)|^{-1}\int_{B(x,t)} f(y)\,dy$ and $|B(x,t)|$  
the Lebesgue measure. 
In \cite{AMV} the operator $V_1$ was used to characterize the Sobolev
 space $W^{1,p}(\Bbb R^n)$ as follows.    
\begin{theorema}\label{thma} Let $1<p<\infty$. Then, 
$f$ belongs to $W^{1,p}(\Bbb R^n)$ if and only if 
 $f\in L^p(\Bbb R^n)$ and $V_1(f)\in L^p(\Bbb R^n);$  
furthermore, 
$$\|V_1(f)\|_p\simeq \|\nabla f\|_p,   $$  
which  means that 
 there exist positive constants $c_1$, $c_2$ independent of $f$ 
 such that
$$c_1  \|V_1(f)\|_p\leq \|\nabla f\|_p \leq c_2\|V_1(f)\|_p.$$ 
\end{theorema}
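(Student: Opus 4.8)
The plan is to pass to the Fourier side and recognize $V_1$ as a Littlewood--Paley square function composed with Riesz transforms. Writing $\psi=|B(0,1)|^{-1}\mathbf 1_{B(0,1)}$ and $\psi_t(y)=t^{-n}\psi(y/t)$, the averaging operator is the convolution $\dashint_{B(x,t)}f=f*\psi_t$, so that $f-\dashint_{B(x,t)}f=f*(\delta-\psi_t)$ has Fourier transform $(1-\Psi(t|\xi|))\hat f(\xi)$, where $\Psi$ is the radial profile of $\widehat\psi$. Since $\psi$ is an $L^1$-normalized, radial, compactly supported function, $\Psi(0)=1$, $\Psi(s)=1-as^2+o(s^2)$ as $s\to0$ for some $a>0$, and $\Psi$ decays like a Bessel function, $|\Psi(s)|\lesssim s^{-(n+1)/2}$ as $s\to\infty$; moreover $\Psi$ is smooth, and the rescaled multipliers below satisfy Mikhlin--H\"ormander bounds uniformly in $t$.

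A first check, and the source of the constant, is the case $p=2$. By Plancherel,
\[
\|V_1(f)\|_2^2=\int_{\Bbb R^n}|\hat f(\xi)|^2\Big(\int_0^\infty|1-\Psi(t|\xi|)|^2\,t^{-3}\,dt\Big)\,d\xi ,
\]
and the substitution $s=t|\xi|$ turns the inner integral into $c_0|\xi|^2$ with $c_0=\int_0^\infty|1-\Psi(s)|^2\,s^{-3}\,ds$. The asymptotics above make $c_0$ finite and positive (the integrand is $\sim a^2 s$ near $0$ and $\sim s^{-3}$ near $\infty$), so $\|V_1(f)\|_2^2=c_0\,\|\,|\xi|\hat f\,\|_2^2\simeq\|\nabla f\|_2^2$. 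This is the model for the $L^p$ statement.

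For general $p$ I would set $\theta(s)=(1-\Psi(s))/s$, so that $t^{-1}(1-\Psi(t|\xi|))=|\xi|\,\theta(t|\xi|)$ and $V_1(f)(x)=\big(\int_0^\infty|G_t(x)|^2\,dt/t\big)^{1/2}$ with $\widehat{G_t}(\xi)=|\xi|\theta(t|\xi|)\hat f(\xi)$. The profile $\theta$ inherits $\theta(s)\sim as$ as $s\to0$ and $\theta(s)=O(s^{-1})$ as $s\to\infty$, hence vanishes at both ends — exactly the cancellation required of a Littlewood--Paley symbol. Using $|\xi|\hat f(\xi)=(2\pi i)^{-1}\sum_j(\xi_j/|\xi|)\,\widehat{\partial_j f}(\xi)$, I would write $G_t=(2\pi i)^{-1}\sum_j Q_tR_j(\partial_j f)$, where $R_j$ is the $j$th Riesz transform and $Q_t$ is the Fourier multiplier with symbol $\theta(t|\xi|)$. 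Since the Riesz transforms are bounded on $L^p$ for $1<p<\infty$ and the $H$-valued operator $g\mapsto\{Q_tg\}_{t>0}$ (with $H=L^2(dt/t)$) is bounded from $L^p$ into $L^p(H)$ by the vector-valued Calder\'on--Zygmund/Mikhlin theory — whose hypotheses hold because of the decay of $\Psi$ and its derivatives — this yields $\|V_1(f)\|_p\lesssim\|\nabla f\|_p$.

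For the reverse inequality I would invoke a Calder\'on reproducing formula: since $\theta$ is real with $c_1:=\int_0^\infty\theta(s)^2\,ds/s\in(0,\infty)$, one has $c_1|\xi|\hat f(\xi)=\int_0^\infty\theta(t|\xi|)\,\widehat{G_t}(\xi)\,dt/t$, whence $\partial_k f=(2\pi i/c_1)\,R_k\int_0^\infty Q_tG_t\,dt/t$. The adjoint form of the Littlewood--Paley estimate gives $\big\|\int_0^\infty Q_tG_t\,dt/t\big\|_p\lesssim\big\|(\int_0^\infty|G_t|^2\,dt/t)^{1/2}\big\|_p=\|V_1(f)\|_p$, and one more use of Riesz boundedness produces $\|\nabla f\|_p\lesssim\|V_1(f)\|_p$. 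Finally, for the characterization proper — where one is given only $f\in L^p$ and $V_1(f)\in L^p$ — the same formula is used to \emph{define} candidates $g_k:=(2\pi i/c_1)\,R_k\int_0^\infty Q_tG_t\,dt/t\in L^p$, and one checks by testing against Schwartz functions (with Fubini justified by the finiteness of $V_1(f)$) that $g_k=\partial_k f$ as distributions, so $f\in W^{1,p}$. I expect this last identification to be the main obstacle: passing from mere $L^p$-finiteness of the square function to the existence of a genuine weak gradient — as opposed to comparing norms when $\nabla f$ is already known to exist — is where the argument needs the most care, namely the convergence of the $t$-integral, the legitimacy of the reproducing formula on $L^p$, and the distributional identity.
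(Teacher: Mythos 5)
Your proposal is correct in substance and follows the same skeleton that the paper relies on: note that the paper does not prove Theorem A directly but derives it from Theorem B, i.e., from the two-sided $L^p$ bound for $S_1(g)=g_{\psi^{(1)}}(g)$ with $\psi^{(1)}=L_1-\Phi*L_1$, combined with Riesz transform boundedness. Your operator $Q_t$, with symbol $\theta(t|\xi|)=(1-\Psi(t|\xi|))/(t|\xi|)$, is exactly convolution with $2\pi\,\psi^{(1)}_t$, and your identity $G_t=(2\pi i)^{-1}\sum_jQ_tR_j(\partial_jf)$ is precisely the factorization $V_1(f)=c\,S_1(I_{-1}f)$, $I_{-1}=c'\sum_jR_j\partial_j$, that underlies the reduction of Theorem A to Theorem B. Where you differ is in how the two-sided bound for the $g$-function is obtained: you use vector-valued Calder\'on--Zygmund/Mikhlin theory for the upper bound and the Calder\'on reproducing formula plus duality for the lower bound, whereas the paper's general machinery (Theorem \ref{thm5}) proves the upper bound from kernel size conditions ($B_\epsilon$, $D_u$, $\|H_\psi\|_1$) and the lower bound from invertibility of the multiplier $m(\xi)=\int_0^\infty|\hat\psi(\delta_t^*\xi)|^2\,dt/t$ (Theorem \ref{thm7}). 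For the present radial Euclidean kernel the paper itself remarks (after the proof of Theorem \ref{thm3}) that polarization suffices because $\mathscr F(\psi^{(\alpha)})$ is radial --- which is essentially your reproducing-formula argument, so the two routes coincide here; the paper's heavier multiplier-invertibility route is what buys the extension to non-isotropic dilations and non-radial $\Phi$, where $m$ is no longer constant on spheres.

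The one place where your argument is genuinely thin is the characterization direction: given only $f\in L^p$ and $V_1(f)\in L^p$, you must produce the weak gradient, and you correctly identify the obstacles (convergence of $\int_0^\infty Q_tG_t\,dt/t$, validity of the reproducing formula beyond $\mathscr S_0$, the distributional identity) but do not resolve them. In particular the integral $\int_0^\infty Q_tG_t\,dt/t$ has no a priori meaning for a general such $f$; it must be defined as a limit of truncations $\int_\epsilon^{1/\epsilon}$, and one needs a compactness or duality argument to extract the limit in $L^p$. The paper's proof of the analogous step (Theorem \ref{thm4}) does this by cutting $f$ in space and amplitude ($f_{(m)}$) and in frequency ($f^{(\epsilon)}$), establishing the norm equivalence for the regularized functions, and then passing to the limit via Fatou's lemma and a weakly convergent subsequence of $\mathcal I_{-\alpha}^{(\epsilon_k/2)}f^{(\epsilon_k)}$, whose weak limit is identified as $\mathcal I_{-\alpha}(f)$ by testing against $\mathscr S_0$. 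Some such regularization-and-weak-limit scheme is needed to make your last paragraph rigorous; with it, your plan goes through.
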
 
\par 
Let $\mathscr S(\Bbb R^n)$ be the Schwartz class  of 
rapidly decreasing smooth functions on $\Bbb R^n$.  
Define 
$$\mathscr S_0(\Bbb R^n)=\{f\in \mathscr S(\Bbb R^n): 
\text{$\hat{f}$ vanishes near the origin} \},   $$ 
where the Fourier transform $\hat{f}$ is defined as    
$$\hat{f}(\xi)=\int_{\Bbb R^n} f(x)e^{-2\pi i\langle x,\xi\rangle}\, dx, 
\quad  \langle x,\xi\rangle=\sum_{k=1}^n x_k\xi_k.  
$$  
We also write $\mathscr F(f)$ for $\hat{f}$. 
For $0<\alpha<n$, $n\geq 2$,  let  $I_\alpha$ be the Riesz potential operator 
defined by 
\begin{equation}\label{1.04}
\mathscr F(I_\alpha(f))(\xi)=(2\pi|\xi|)^{-\alpha} \hat{f}(\xi),   
\quad f\in \mathscr S_0  
\end{equation}  
(see \cite[Chap. V]{St}).   
Let  
\begin{equation}\label{1.3} 
S_\alpha(f)(x)=\left(\int_0^\infty \left|I_\alpha(f)(x)-
\dashint_{B(x,t)}I_\alpha(f)(y)\, dy\right|^2 
\frac{dt}{t^{1+2\alpha}}\right)^{1/2},  
\end{equation}  
Then we also find the following result in \cite{AMV}. 
\begin{theoremb}\label{thmb} 
  Let $0<\alpha<2$ and $1<p<\infty$. Then 
$$\|S_\alpha (f)\|_p \simeq \|f\|_p.   $$
\end{theoremb}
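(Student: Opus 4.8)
The plan is to recognize $S_\alpha$ as a radial Littlewood--Paley (Marcinkiewicz-type) square function attached to a single Fourier multiplier, and then run the standard two-sided square-function estimate. Write the averaging operator as a convolution: with $\phi=|B(0,1)|^{-1}\chi_{B(0,1)}$ and $\phi_t(y)=t^{-n}\phi(y/t)$ one has $\dashint_{B(x,t)}g(y)\,dy=(g*\phi_t)(x)$, so that $\widehat{\phi_t}(\xi)=\hat\phi(t\xi)$. Since $\phi$ is radial, $\hat\phi(\xi)=m(|\xi|)$ for a real profile $m$. Combining this with $\mathscr F(I_\alpha f)(\xi)=(2\pi|\xi|)^{-\alpha}\hat f(\xi)$ gives
\[
I_\alpha f(x)-\dashint_{B(x,t)}I_\alpha f(y)\,dy=\mathscr F^{-1}\!\big[(1-m(t|\xi|))(2\pi|\xi|)^{-\alpha}\hat f\big](x).
\]
After inserting the factor $t^{-2\alpha}$ from $dt/t^{1+2\alpha}=t^{-2\alpha}\,dt/t$ and using homogeneity, I would set
\[
\eta(r)=(2\pi)^{-\alpha}r^{-\alpha}\big(1-m(r)\big),\qquad \hat\psi(\xi)=\eta(|\xi|),\quad \psi_t(x)=t^{-n}\psi(x/t),
\]
so that $t^{-\alpha}(1-m(t|\xi|))(2\pi|\xi|)^{-\alpha}=\eta(t|\xi|)=\hat\psi(t\xi)$ and hence $S_\alpha f(x)=\big(\int_0^\infty|(f*\psi_t)(x)|^2\,dt/t\big)^{1/2}$.

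The next step is to record the size of $\eta$. Because $\phi$ is a radial probability density, $m(0)=1$ and $m(r)=1-cr^2+o(r^2)$ with $c>0$, so $\eta(r)\simeq r^{2-\alpha}$ as $r\to0$, which is where $0<\alpha<2$ enters (it forces the cancellation $\hat\psi(0)=0$); and since $|m(r)|<1$ for $r>0$ with $m(r)\to0$, one gets $\eta(r)>0$ for $r>0$ and $\eta(r)=O(r^{-\alpha})$ as $r\to\infty$, which uses $\alpha>0$. In particular $\eta$ is smooth on $(0,\infty)$ and $C_\eta:=\int_0^\infty\eta(r)^2\,dr/r$ is finite and positive. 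By Plancherel and the scale-invariance $\int_0^\infty\eta(t|\xi|)^2\,dt/t=C_\eta$ for every $\xi\neq0$, the $L^2$ identity $\|S_\alpha f\|_2^2=C_\eta\|f\|_2^2$ follows immediately, together with the Calder\'on reproducing formula $C_\eta f=\int_0^\infty \psi_t*\psi_t*f\,dt/t$.

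For the upper bound $\|S_\alpha f\|_p\le C\|f\|_p$ ($1<p<\infty$) I would view $f\mapsto(f*\psi_t)_{t>0}$ as a convolution operator with values in the Hilbert space $\mathcal H=L^2((0,\infty),dt/t)$, whose kernel is the $\mathcal H$-valued function $x\mapsto(\psi_t(x))_{t>0}$. The $L^2\to L^2(\mathcal H)$ bound is the Plancherel identity above, so by the Benedek--Calder\'on--Panzone vector-valued Calder\'on--Zygmund theory it suffices to verify the H\"ormander regularity condition for this kernel. This is the technical heart and the main obstacle: one must show $\psi$ enjoys decay and smoothness of the form $|\psi(x)|+|\nabla\psi(x)|\le C(1+|x|)^{-n-\delta}$ for some $\delta>0$. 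These follow from the behaviour of $\hat\psi=\eta(|\cdot|)$ --- the singularity $\sim|\xi|^{2-\alpha}$ at the origin (with $2-\alpha>0$) controls the decay of $\psi$ at infinity, while the $O(|\xi|^{-\alpha})$ decay of $\hat\psi$, together with the smoothness of $m$ away from $0$ and its Bessel-type decay, controls the local regularity; if one prefers to avoid the borderline smoothness of $|\xi|^{2-\alpha}$ at $0$, a dyadic splitting of $\eta$ into low- and high-frequency pieces reduces everything to Mikhlin-type estimates. Once the H\"ormander condition holds, one obtains weak $(1,1)$ and, by interpolation with the $L^2$ bound and a duality argument, strong $(p,p)$ for all $1<p<\infty$.

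Finally the lower bound $\|f\|_p\le C\|S_\alpha f\|_p$ is obtained by duality from the reproducing formula. Since $\psi$ is real and radial, for $g\in L^{p'}$ one has $C_\eta\langle f,g\rangle=\int_0^\infty\langle \psi_t*f,\psi_t*g\rangle\,dt/t$; Cauchy--Schwarz in $(x,t)$ bounds this by $\int_{\Bbb R^n}S_\alpha f(x)\,S_\alpha g(x)\,dx$, and H\"older together with the already-established $L^{p'}$ boundedness of $S_\alpha$ gives $|\langle f,g\rangle|\le C_\eta^{-1}C\,\|S_\alpha f\|_p\|g\|_{p'}$. Taking the supremum over $\|g\|_{p'}\le1$ yields the claim, and combining the two inequalities proves $\|S_\alpha f\|_p\simeq\|f\|_p$.
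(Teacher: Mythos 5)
Your overall strategy is sound, and it is genuinely different from the route this paper takes. The paper does not prove Theorem B directly (it is quoted from \cite{AMV}); what it does prove is the weighted, non-isotropic generalization (Theorems \ref{thm2} and \ref{thm3}), by writing $S_\alpha(f)=g_{\psi^{(\alpha)}}(f)$ with $\psi^{(\alpha)}=L_\alpha-\Phi*L_\alpha$ --- the same identification you make on the Fourier side --- and then invoking Theorem \ref{thm5}. There the upper bound rests on the integrability conditions $(1)$--$(3)$, which require no smoothness of $\psi$ and tolerate its local singularity, being proved by oscillatory-integral estimates from \cite{Sacze}; the lower bound rests on the invertibility of the homogeneous multiplier $m(\xi)=\int_0^\infty|\hat\psi(\delta_t^*\xi)|^2\,dt/t$ via the functional calculus of Theorem \ref{thm6}. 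You instead use vector-valued Calder\'on--Zygmund theory for the upper bound and polarization/duality from the exact Calder\'on reproducing formula for the lower bound. The latter is legitimate precisely because $\hat\psi$ is radial, so that $\int_0^\infty|\hat\psi(t\xi)|^2\,dt/t$ is a constant independent of $\xi$; the paper makes exactly this observation in the remark following the proof of Theorem \ref{thm3}. What the paper's heavier machinery buys is the extension to $A_p$ weights, non-isotropic dilations and non-radial $\hat\psi$; what your argument buys is a self-contained classical proof in the unweighted Euclidean setting.

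One technical claim in your upper bound is wrong as stated: $\psi$ does not satisfy $|\psi(x)|+|\nabla\psi(x)|\le C(1+|x|)^{-n-\delta}$, because $\psi=L_\alpha-\Phi*L_\alpha$ inherits the singularity $|\psi(x)|\simeq|x|^{\alpha-n}$, $|\nabla\psi(x)|\simeq|x|^{\alpha-n-1}$ of the Riesz kernel near the origin (recall $\alpha<2\le n$, so $\psi$ is not even locally bounded). This does not sink the approach: what the Benedek--Calder\'on--Panzone theory needs is the size and H\"ormander conditions for the $\mathcal H$-valued kernel $x\mapsto(\psi_t(x))_{t>0}$, and a direct computation (substitute $s=|x|/t$ in $\int_0^\infty t^{-2n}|\psi(x/t)|^2\,dt/t$, using $|\psi(y)|\lesssim|y|^{\alpha-n}$ for $|y|\le 2$ and the moment cancellation of $\Phi$ for the decay $|\psi(y)|\lesssim|y|^{\alpha-n-2}$ at infinity) shows that the local singularity is harmless after taking the $L^2(dt/t)$ norm: one gets $\bigl\|(\psi_t(x))_t\bigr\|_{\mathcal H}\le C|x|^{-n}$ and $\bigl\|(\nabla\psi_t(x))_t\bigr\|_{\mathcal H}\le C|x|^{-n-1}$, with both integrals converging exactly when $0<\alpha<2$. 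Replace the pointwise bound on $\psi$ by these vector-valued estimates; with that repair the proof is complete.
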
  
Theorem A can be derived from this result with $\alpha=1$ when $n\geq 2$. 
\par  
The operator $S_\alpha$ is a kind of the Littlewood-Paley operators.  
Let $\psi\in L^1(\Bbb R^n)$  satisfy   
\begin{equation}\label{cancell}
\int_{\Bbb R^n} \psi (x)\,dx = 0.  
\end{equation} 
Put $\psi_t(x)=t^{-n}\psi(t^{-1}x)$.  
Then  
the Littlewood-Paley function on $\Bbb R^n$ is defined by 
\begin{equation}\label{lpopeu}
g_{\psi}(f)(x) = \left( \int_0^{\infty}|f*\psi_t(x)|^2
\,\frac{dt}{t} \right)^{1/2}.    
\end{equation} 
We can see that $S_\alpha(f)=g_{\psi^{(\alpha)}}(f)$, where 
\begin{equation} \label{eee1.6} 
\psi^{(\alpha)}(x)=L_\alpha(x)-\Phi*L_\alpha(x),  
\end{equation}   
with 
$$L_\alpha(x)=\tau(\alpha)|x|^{\alpha-n}, \quad  
\tau(\alpha)=\frac{\Gamma\left(n/2-\alpha/2\right)}
{\pi^{n/2}2^\alpha \Gamma\left(\alpha/2\right)} $$ 
and $\Phi=\chi_0$, $\chi_0=|B(0,1)|^{-1}\chi_{B(0,1)}$ 
($\chi_E$ denotes the characteristic function of a set $E$).    
We note that $\mathscr F(L_\alpha)(\xi)=(2\pi|\xi|)^{-\alpha}$, $0<\alpha<n$. 
\par 
The square function $S_1(f)$ is closely related to the function of 
Marcinkiewicz on $\Bbb R^1$, which is defined by 
$$\mu(f)(x) = \left( \int_0^{\infty}|F(x+t) + F(x-t) - 2F(x)|^2\,\frac{dt}{t^3}
 \right)^{1/2}, $$ 
 where  $F(x) = \int_{-\infty}^xf(y)\,dy$, $f\in \mathscr S(\Bbb R)$.  
  It is known that   
\begin{equation}\label{1.1}
\|\mu(f)\|_p\simeq \|f\|_p, 
\end{equation} 
for $1<p<\infty$. Also, we consider a variant of $\mu(f)$, which can be 
regarded as an analogue of $S_1$ in the one-dimensional case: 
$$\nu(f)(x) = \left( \int_0^{\infty}|F(x)-F*\Phi_t(x)|^2\,\frac{dt}{t^3}
 \right)^{1/2},   $$ 
where $\Phi=(1/2)\chi_{[-1,1]}$.  
It is known that $\mu(f)=g_\psi(f)$ with $\psi(x)=\chi_{[-1,1]}(x)\sgn(x)$. 
By inspection, we see that $\nu(f)=g_{\psi^{(0)}}(f)$, where 
$\psi^{(0)}(x)=(1/2)\psi(x)-(1/2)\psi^{(1)}(x)$ with 
$\psi^{(1)}(x)=x\chi_{[-1,1]}(x)$. 
This would indicate that 
the square functions $\mu(f)$ and $\nu(f)$ are intimately related.      
For the function of Marcinkiewicz  we refer to \cite{M},    
Zygmund \cite{Z},  Waterman \cite{Wa}.       
\par 
An interesting feature of Theorem A is that it suggests  
the possibility of defining the Sobolev space analogous to $W^{1,p}(\Bbb R^n)$ 
in metric measure spaces in a reasonable way. 
In this note, we shall extend Theorem A to the case of the weighted 
Sobolev spaces with the parabolic metrics of Calder\'{o}n-Torchinsky 
\cite{CT, CT2}.  
\par 
Let $P$ be an $n\times n$ real matrix, $n\geq 2$,  such that 
\begin{equation}\label{pe1}
\langle Px,x\rangle \geq \langle x,x\rangle \quad \text{for all $x\in 
\Bbb R^n$. }
\end{equation} 
A dilation group 
$\{\delta_t\}_{t>0}$ on $\Bbb R^n$ is defined by $\delta_t=
t^P=\exp((\log t) P)$. 
\par 
It is known that $|\delta_tx|=\langle \delta_tx,\delta_tx\rangle^{1/2}$
 is strictly increasing as a function of $t$ on $(0,\infty)$ 
when $x\neq 0$.  
Let $\rho(x)$, $x\neq 0$, to be the unique positive real 
number $t$ such that $|\delta_{t^{-1}}x|=1$ and let $\rho(0)=0$.  
Then the norm function $\rho$ is continuous on $\Bbb R^n$ and 
infinitely differentiable in $\Bbb R^n\setminus \{0\}$ and satisfies that 
$\rho(A_tx)=t\rho(x)$, $t>0$, $x\in \Bbb R^n$.   
 We have the following properties of $\rho(x)$ 
(see \cite{CT, Ca}):  
\begin{enumerate} 
\item[(1)]  $\rho(-x)=\rho(x)$ for all $x\in \Bbb R^n$; 
\item[(2)]  $\rho(x+y)\leq \rho(x)+\rho(y)$ for all $x, y \in \Bbb R^n$; 
\item[(3)]  $\rho(x)\leq 1$ if and only if $|x|\leq 1$; 
\item[(4)]   $c_1\rho(x)^{\tau_1}\leq |x|\leq \rho(x)$ when $|x|\leq 1$ for 
some $c_1, \tau_1>0$; 
\item[(5)]   $\rho(x)\leq |x|\leq c_2\rho(x)^{\tau_2}$ when $|x|\geq 1$ for 
some $c_2, \tau_2>0$.  
\end{enumerate} 
Also, 
\begin{enumerate} 
\item[(a)]   $|\delta_tx|\geq t|x|$ for all $x\in \Bbb R^n$ and $t\geq 1$; 
\item[(b)]   $|\delta_tx|\leq t|x|$ for all $x\in \Bbb R^n$ and $0<t\leq 1$.  
\end{enumerate} 
Let $\delta_t^*$ denote the adjoint of $\delta_t$. Then,  
  we can also consider similarly a norm function $\rho^*(x)$ associated with 
 the dilation group $ \{\delta_t^*\}_{t>0}$, and we have properties of 
$\rho^*(x)$ and $\delta_t^*$ analogous to those 
 of $\rho(x)$ and $\delta_t$ above.  
It is known that 
a polar coordinates expression for the Lebesgue measure  
\begin{equation}\label{1.9polar}
\int_{\Bbb R^n}f(x)\,dx=\int_0^\infty\int_{S^{n-1}} f(\delta_t\theta)
t^{\gamma-1}s(\theta)\, d\sigma(\theta)\,dt  
\end{equation}  
holds,  
where $\gamma=\text{{\rm trace} $P$}$ and 
$s$ is a strictly positive $C^\infty$ function on 
$S^{n-1}=\{|x|=1\}$ and 
$d\sigma$ is the Lebesgue surface measure on $S^{n-1}$ 
(see \cite{FR, Ri, SW}).  
We note that the condition \eqref{pe1} implies that all 
eigenvalues of $P$ have real parts greater than or equal to $1$ 
(see \cite[pp. 3--4]{CT}, \cite[p. 137]{LA}). 
So we have $\gamma\geq n$.    
\par     
Let 
\begin{equation}\label{1.10ball}
B(x,t)=\{y\in\Bbb R^n: \rho(x-y)<t\} 
\end{equation}  
be a  ball  with respect to $\rho$ (a $\rho$-ball) in $\Bbb R^n$ 
with radius $t$ centered at $x$. 
We say that a weight function $w$ belongs to the  Muckenhoupt class $A_p$, 
 $1<p< \infty$,   if 
 $$[w]_{A_p}=  
 \sup_B \left(|B|^{-1} \int_B w(x)\,dx\right)\left(|B|^{-1} \int_B
w(x)^{-1/(p-1)}dx\right)^{p-1} < \infty, $$
where the supremum is taken over all $\rho$-balls $B$ in $\Bbb R^n$. 
The Hardy-Littlewood maximal operator $M$ is defined as 
$$M(f)(x)=\sup_{x\in B}|B|^{-1}\int_B|f(y)|\,dy,   $$ 
where the supremum is taken over all $\rho$-balls $B$ in $\Bbb R^n$ 
containing $x$.  
The class $A_1$ is defined to be the family of weight functions $w$ 
such that $M(w)\leq Cw$ almost everywhere; the infimum of all such $C$  
will be denoted by $[w]_{A_1}$.  
We denote by $L^p_w$ (we also write $L^p(w)$ for $L^p_w$)   
the weighted $L^p$ space with the norm defined as 
 $$  \|f\|_{L^p_w}=\|f\|_{L^p(w)}= \left(\int_{\Bbb R^n} |f(x)|^pw(x)
 \,dx\right)^{1/p}. $$  
 See \cite{C}, \cite{CF}, \cite{GR}, \cite{ST}   
 for results related to the weight class $A_p$.   
The following results are known and useful. 
\begin{proposition}\label{prop0} 
Let $1<p<\infty$, $w\in A_p$.  
\begin{enumerate} 
\item[(i)] The space $\mathscr S_0$ is dense in $L^p_w$.   
\item[(ii)] The maximal operator $M$ is bounded on $L^p_w$. 
\item[(iii)] If $\varphi\in \mathscr S$, then $\sup_{t>0}|f*\varphi_t|\leq 
CM(f)$. Here and in what follows 
$\varphi_t(x)=t^{-\gamma}\varphi(\delta_t^{-1}x)$. 
\item[(iv)]  $\mathscr F(g*\varphi_t)(\xi)=
\hat{g}(\xi)\hat{\varphi}(\delta_t^*\xi)$ 
for $g, \varphi\in \mathscr S$.   
\end{enumerate}   

\end{proposition}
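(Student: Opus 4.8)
The plan is to treat the four assertions in the order (iv), (ii), (iii), (i), since the later parts depend on the earlier ones.

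For (iv) I would simply change variables in the defining integral. With $\varphi_t(x)=t^{-\gamma}\varphi(\delta_t^{-1}x)$, substituting $y=\delta_t^{-1}x$ (so that $dx=t^{\gamma}\,dy$ because $\det\delta_t=\exp((\log t)\operatorname{trace}P)=t^{\gamma}$) gives $\widehat{\varphi_t}(\xi)=\widehat{\varphi}(\delta_t^{*}\xi)$, using $\langle\delta_t y,\xi\rangle=\langle y,\delta_t^{*}\xi\rangle$; combined with $\mathscr F(g*\varphi_t)=\hat g\,\widehat{\varphi_t}$ this yields the claim. For (ii), the first thing I would record is that $(\Bbb R^n,\rho,dx)$ is a space of homogeneous type: by property (1) and $\rho(\delta_t x)=t\rho(x)$ one has $B(x,t)=x+\delta_t B(0,1)$, hence $|B(x,t)|=t^{\gamma}|B(0,1)|$ and the doubling condition $|B(x,2t)|=2^{\gamma}|B(x,t)|$ holds. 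With doubling in hand, the $A_p$-weighted boundedness of $M$ is the Muckenhoupt theorem in its homogeneous-space formulation, which I would invoke from the weighted theory cited in the references.

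For (iii) I would dominate $\varphi$ by a nonincreasing $\rho$-radial majorant. Since $\varphi\in\mathscr S$ decays faster than any power of $|x|$, properties (4)--(5) (which compare $|x|$ and $\rho(x)$) transfer this to rapid decay in $\rho$: one obtains $|\varphi(x)|\le\Psi(x):=\psi_0(\rho(x))$ for some nonnegative nonincreasing $\psi_0$ with $\int_{\Bbb R^n}\Psi<\infty$, the finiteness being read off from the polar-coordinate formula \eqref{1.9polar}. The pointwise bound $\sup_{t>0}|f*\Psi_t|\le C M(f)$ for such a radial decreasing majorant is then the standard layer-cake argument: approximate $\psi_0$ from below by nonnegative combinations of characteristic functions of intervals, note that convolution with a suitable multiple of $|B(0,s)|^{-1}\chi_{B(0,s)}$ is bounded pointwise by $M(f)$, and sum.

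For (i) I would first use that $\mathscr S$ is dense in $L^p_w$ (a consequence of $w,\,w^{-1/(p-1)}\in L^1_{\mathrm{loc}}$, via truncation and mollification), and then approximate a given $g\in\mathscr S$ by elements of $\mathscr S_0$. Choose $\Lambda\in\mathscr S$ with $\hat\Lambda\in C_c^\infty$ and $\hat\Lambda\equiv 1$ near the origin, and set $g_t=g-g*\Lambda_t$. By (iv), $\widehat{g_t}(\xi)=\hat g(\xi)\bigl(1-\hat\Lambda(\delta_t^{*}\xi)\bigr)$, which vanishes on the neighborhood $\{\xi:|\delta_t^{*}\xi|<r_0\}$ of the origin, so $g_t\in\mathscr S_0$. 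Letting $t\to\infty$, property (a) gives $|\delta_t u|\ge t|u|$, whence $g*\Lambda_t(x)=\int g(x-\delta_t u)\Lambda(u)\,du\to 0$ pointwise, while $|g*\Lambda_t|\le C M(g)$ by (iii); since $A_p$-weights grow at most polynomially and $g$ decays rapidly, $g\in L^p_w$, so $M(g)\in L^p_w$ by (ii), and dominated convergence in $L^p_w$ gives $\|g-g_t\|_{L^p_w}\to 0$.

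The main obstacle I anticipate is the interplay in (iii) and (i): correctly using (4)--(5) together with \eqref{1.9polar} to convert the Euclidean decay of a Schwartz function into an integrable $\rho$-radial nonincreasing majorant, and then pushing the frequency-cutoff approximation through the \emph{weighted} norm, where the domination by $M(g)\in L^p_w$ is precisely what legitimizes the limit $t\to\infty$. Part (ii) is conceptually the deepest, but it reduces to citing the homogeneous-space $A_p$ theory once the doubling identity $|B(x,t)|=t^{\gamma}|B(0,1)|$ is in place.
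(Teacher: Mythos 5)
Your proposal is correct. The paper itself supplies no proof of Proposition \ref{prop0} --- it declares these facts known and points to the references \cite{C}, \cite{CF}, \cite{GR}, \cite{ST} --- and the arguments you give (the change of variables with $\det\delta_t=t^{\gamma}$ for (iv); the identity $|B(x,t)|=t^{\gamma}|B(0,1)|$ reducing (ii) to the $A_p$ theory on a space of homogeneous type; the nonincreasing $\rho$-radial integrable majorant built from properties (3)--(5) and \eqref{1.9polar} for (iii); and the frequency cutoff $g-g*\Lambda_t$ with domination by $M(g)\in L^p_w$ for (i)) are exactly the standard ones underlying those citations, with the delicate points (uniformity in $t$ of the majorant, and the passage to the limit in the weighted norm) correctly handled.
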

\par 
Let $\beta \in \Bbb R$ and define the Riesz potential operator 
$\mathcal I_\beta$ associated with the dilations $\delta_t^*$ by 
\begin{equation} 
\mathscr{F}(\mathcal I_\beta(f))(\xi)=\rho^*(\xi)^{-\beta}\hat{f}(\xi) 
\end{equation} 
for $f\in \mathscr S_0$.   
Let $1<p<\infty$, $\alpha>0$ and $w\in A_p$. 
Define the weighted parabolic Sobolev space 
$W_w^{\alpha, p}$ by 
\begin{equation}\label{def1}
W_w^{\alpha, p}=\{f\in L^p_w : \text{$f=\mathcal{I}_\alpha(g)$ 
for some $g\in L^p_w$}\}, 
\end{equation} 
where $f=\mathcal{I}_\alpha(g)$ means that 
\begin{equation*} 
\int_{\Bbb R^n} f(x)h(x)\, dx= \int_{\Bbb R^n} g(x)\mathcal{I}_\alpha(h)\, dx
\quad \text{for all $h\in \mathscr S_0$}.  
\end{equation*} 
We note that the function $g \in L^p_w$ is uniquely determined by $f$, since 
$\mathcal{I}_\alpha$ is a bijection on $\mathscr S_0$ and $\mathscr S_0$ is 
dense in $L^{p'}(w^{-p'/p})$, the dual space of $L^p(w)$,  where $1/p+1/p'=1$. 
We write $g=\mathcal{I}_{-\alpha}(f)$. 
For $f\in W_w^{\alpha, p}$ we define 
\begin{equation}\label{def2} 
\|f\|_{p,\alpha,w}= \|f\|_{p,w}+ \|\mathcal{I}_{-\alpha}(f)\|_{p,w}. 
\end{equation}  
\par 
We have analogues of Theorems A and B in the case of non-isotropic dilations 
$\delta_t$ with weights. 
 Let $B(x,t)$ be as in \eqref{1.10ball} and 
 \begin{equation}\label{cevv2} 
 B_\alpha (f)(x)=\left(\int_0^\infty \left|f(x)-
 \dashint_{B(x,t)}f(y)\, dy\right|^2 
\frac{dt}{t^{1+2\alpha}}\right)^{1/2},  \quad \alpha>0.  
 \end{equation} 
\begin{theorem} \label{thm1} 
 Suppose that 
 $1<p<\infty$, $w\in A_p$ and $0<\alpha<2$.  
  Then 
$f\in W^{\alpha, p}_w$ if and only if $f\in L^p_w$ and 
$B_\alpha(f) \in L^p_w;$ also,  
$$\|\mathcal{I}_{-\alpha}(f)\|_{p,w} \simeq \|B_\alpha(f)\|_{p,w}.  $$
\end{theorem}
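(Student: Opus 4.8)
The plan is to recognize $B_\alpha(f)$ as a Littlewood--Paley function applied to the Riesz potential $g=\mathcal I_{-\alpha}(f)$, and then to invoke the two-sided weighted $L^p$ estimate for such square functions. Writing $\chi_0=|B(0,1)|^{-1}\chi_{B(0,1)}$ for the $\rho$-ball, we have $\dashint_{B(x,t)}f(y)\,dy=f*(\chi_0)_t(x)$ with $(\chi_0)_t(x)=t^{-\gamma}\chi_0(\delta_t^{-1}x)$. For $f=\mathcal I_\alpha(g)$ I would compute on the Fourier side, using Proposition \ref{prop0}(iv) and the homogeneity $\rho^*(\delta_t^*\xi)=t\rho^*(\xi)$, that
$$\mathscr F\bigl(f-f*(\chi_0)_t\bigr)(\xi)=\rho^*(\xi)^{-\alpha}\bigl(1-\hat\chi_0(\delta_t^*\xi)\bigr)\hat g(\xi)=t^\alpha\,\hat\psi(\delta_t^*\xi)\,\hat g(\xi),$$
where $\hat\psi(\xi)=\rho^*(\xi)^{-\alpha}(1-\hat\chi_0(\xi))$; equivalently $\psi=\mathcal L_\alpha-\mathcal L_\alpha*\chi_0$ with $\mathscr F(\mathcal L_\alpha)(\xi)=\rho^*(\xi)^{-\alpha}$. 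Thus $f-f*(\chi_0)_t=t^\alpha(g*\psi_t)$, and substituting into \eqref{cevv2} converts the measure $t^{-1-2\alpha}\,dt$ into $t^{-1}\,dt$, yielding the pointwise identity $B_\alpha(f)=g_\psi(g)$.

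The next, and principal, step is to prove that $\psi$ is an admissible kernel, so that
$$\|g_\psi(h)\|_{p,w}\simeq \|h\|_{p,w},\qquad h\in L^p_w,\ w\in A_p.$$
Here I would first establish the kernel estimates on $\psi$. The cancellation $\int\psi=0$ follows from $\hat\psi(0)=0$, since $1-\hat\chi_0(\xi)=O(|\xi|^2)$ near the origin while $\rho^*(\xi)^{-\alpha}\le|\xi|^{-\alpha}$ there and $\alpha<2$. The crux is the size and smoothness of $\psi$: the kernel $\mathcal L_\alpha$ is $\delta_t$-homogeneous of degree $\alpha-\gamma$, smooth off the origin with an integrable singularity at $0$, and subtracting its $\rho$-ball average both removes the local singularity and, through the vanishing of the first-order term, improves the decay of the difference at infinity to an integrable rate. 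Quantifying this should give a size bound behaving like $\rho(x)^{\alpha-\gamma}$ near the origin and like $\rho(x)^{-\gamma-\epsilon}$ at infinity, together with a Hörmander-type regularity estimate; these are exactly the hypotheses under which vector-valued Calder\'on--Zygmund theory yields the upper bound $\|g_\psi(h)\|_{p,w}\le C\|h\|_{p,w}$ for $A_p$ weights. For the reverse inequality I would use a Calder\'on reproducing formula: since $\chi_0\ge0$ is even with integral one, $\hat\chi_0(\xi)<1$ for all $\xi\neq0$, so $\hat\psi$ is strictly positive off the origin, and $c(\xi)=\int_0^\infty\hat\psi(\delta_t^*\xi)^2\,dt/t$ is finite, positive, and constant along the orbits $\{\delta_t^*\xi\}$; choosing $\eta$ with $\hat\eta=\hat\psi/c$ produces a companion kernel of the same admissibility with $\int_0^\infty\hat\psi(\delta_t^*\xi)\hat\eta(\delta_t^*\xi)\,dt/t=1$. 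Pairing against $h'\in\mathscr S_0$ gives $\langle h,h'\rangle=\int_0^\infty\langle h*\psi_t,\,h'*\tilde\eta_t\rangle\,dt/t$ with $\tilde\eta$ a reflection of $\eta$, which by Cauchy--Schwarz in $t$ and H\"older's inequality is at most $\|g_\psi(h)\|_{p,w}\,\|g_{\tilde\eta}(h')\|_{p',w'}$; applying the upper bound to $g_{\tilde\eta}$ on $L^{p'}_{w'}$ (note $w'=w^{-1/(p-1)}\in A_{p'}$) produces $\|h\|_{p,w}\le C\|g_\psi(h)\|_{p,w}$.

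Finally I would assemble the theorem. For $f\in W^{\alpha,p}_w$ with $g=\mathcal I_{-\alpha}(f)\in L^p_w$, the preceding steps give $\|B_\alpha(f)\|_{p,w}=\|g_\psi(g)\|_{p,w}\simeq\|g\|_{p,w}=\|\mathcal I_{-\alpha}(f)\|_{p,w}$, which proves both the norm equivalence and that $B_\alpha(f)\in L^p_w$. Conversely, given $f\in L^p_w$ with $B_\alpha(f)\in L^p_w$, the reproducing formula lets me define the candidate $g=\int_0^\infty t^{-\alpha}\bigl(f-f*(\chi_0)_t\bigr)*\eta_t\,dt/t$; the reverse inequality applied via duality shows $g\in L^p_w$ with $\|g\|_{p,w}\le C\|B_\alpha(f)\|_{p,w}$, and checking $\mathcal I_\alpha(g)=f$ places $f$ in $W^{\alpha,p}_w$. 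I expect the hardest part to be the kernel analysis of $\psi=\mathcal L_\alpha-\mathcal L_\alpha*\chi_0$, namely extracting the sharp non-isotropic decay and regularity from the non-smooth ball average, together with the density and limiting arguments needed to justify the reproducing formula and the identity $\mathcal I_\alpha(g)=f$ in the converse direction.
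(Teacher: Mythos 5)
Your reduction of $B_\alpha(f)$ to a Littlewood--Paley function is exactly the paper's: with $g=\mathcal I_{-\alpha}(f)$ one has $B_\alpha(f)=g_\psi(g)$ for $\psi=\mathscr L_\alpha-\mathscr L_\alpha*\chi_0$, and the upper bound $\|g_\psi(h)\|_{p,w}\le C\|h\|_{p,w}$ follows from size conditions on $\psi$ (the paper uses only the integrability conditions $(1)$--$(3)$ of Theorem \ref{thm5}, imported from \cite{Sacze}, so no H\"ormander regularity of $\psi$ is needed --- an advantage, since the ball average makes $\psi$ non-smooth across $\rho(x)=t$). The genuine gap is in your reverse inequality. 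You set $c(\xi)=\int_0^\infty\hat\psi(\delta_t^*\xi)^2\,dt/t$ and take $\hat\eta=\hat\psi/c$, asserting that $\eta$ is ``a companion kernel of the same admissibility.'' In the non-isotropic setting $c$ is homogeneous of degree $0$ with respect to $\delta_t^*$ but is \emph{not} constant (that happens only when $\hat\chi_0$ is a function of $\rho^*$, e.g.\ in the Euclidean radial case, which is precisely the situation of the paper's remark following the proof of Theorem \ref{thm3}); moreover the paper only establishes that $c$ is \emph{continuous} and nonvanishing on $S^{n-1}$. Dividing the multiplier $\hat\psi$ by such a rough degree-$0$ homogeneous function gives no control on the physical-space decay of $\eta$, so the admissibility of $\eta$ --- hence the boundedness of $g_{\tilde\eta}$ on $L^{p'}_{w'}$ that your duality argument requires --- is unsubstantiated. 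This is exactly the point where the paper deploys its main technical machinery: Lemma \ref{lem3} shows $c\in M^p_w$ via a polarization identity, and Theorem \ref{thm6} with Corollary \ref{inverse} (approximation by smooth homogeneous multipliers, a spectral radius estimate, and a Cauchy integral) give $c^{-1}\in M^p_w$; then $\|h\|_{p,w}=\|T_{c^{-1}}T_c h\|_{p,w}\le C\|T_ch\|_{p,w}\le C\|g_\psi(h)\|_{p,w}$, with no admissible $\eta$ ever produced.

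A second, smaller gap: the two-sided estimate is proved for $h\in\mathscr S_0$, and passing to the full statement of Theorem \ref{thm1} for general $f\in L^p_w$ is not routine. The paper's proof of Theorem \ref{thm4} devotes most of its length to this: truncations $f_{(m)}$, frequency localizations $f^{(\epsilon)}$, the regularized potentials $\mathcal I_{-\alpha}^{(\epsilon/2)}$, Fatou's lemma, and a weak compactness argument extracting the candidate $g\in L^p_w$ with $f=\mathcal I_\alpha(g)$ from the uniform bound $\sup_\epsilon\|\mathcal I_{-\alpha}^{(\epsilon/2)}f^{(\epsilon)}\|_{p,w}\le C\|B_\alpha(f)\|_{p,w}$. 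Your construction of the candidate $g$ via $g=\int_0^\infty t^{-\alpha}\bigl(f-f*(\chi_0)_t\bigr)*\eta_t\,dt/t$ again rests on the unavailable kernel $\eta$, so the converse direction inherits the same gap. If you replace the reproducing-formula step by the multiplier-invertibility argument and the weak-compactness construction of $g$, the rest of your outline goes through.
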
 
Let  
\begin{equation}\label{cb2} 
C_\alpha(f)(x)=\left(\int_0^\infty \left|\mathcal{I}_\alpha(f)(x)-
\dashint_{B(x,t)}\mathcal{I}_\alpha(f)(y)\, dy\right|^2 
\frac{dt}{t^{1+2\alpha}}\right)^{1/2}.   
\end{equation}  
Then, Theorem \ref{thm1} can be derived from the following result. 
\begin{theorem}\label{thm2} Let $1<p<\infty$, $w\in A_p$, $0<\alpha<2$ 
and  let $C_\alpha$ be as in \eqref{cb2}.  Then 
$$\|C_\alpha (f)\|_{p,w} \simeq \|f\|_{p,w}, 
\quad f\in \mathscr S_0(\Bbb R^n).   $$
\end{theorem}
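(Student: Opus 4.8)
The plan is to identify $C_\alpha$ with a Littlewood--Paley function $g_\psi$ as in \eqref{lpopeu}, and then to establish the weighted equivalence $\|g_\psi(f)\|_{p,w}\simeq\|f\|_{p,w}$; this runs exactly parallel to the passage from \eqref{1.3} to \eqref{eee1.6} in the isotropic case. First I would rewrite the averaging operator as a convolution. With $\Phi=\chi_0$ and $\Phi_t(x)=t^{-\gamma}\Phi(\delta_t^{-1}x)$, the change of variables $y=x-\delta_t u$ (using $|\det\delta_t|=t^{\gamma}$ and $\rho(\delta_t^{-1}z)=t^{-1}\rho(z)$, so that $B(x,t)=x+\delta_tB(0,1)$) gives $\dashint_{B(x,t)}\mathcal I_\alpha(f)(y)\,dy=\mathcal I_\alpha(f)*\Phi_t(x)$. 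Then, using $\mathscr F(\mathcal I_\alpha f)(\xi)=\rho^*(\xi)^{-\alpha}\hat f(\xi)$, the identity $\widehat{\Phi_t}(\xi)=\hat\Phi(\delta_t^*\xi)$, and the homogeneity $\rho^*(\delta_t^*\xi)=t\rho^*(\xi)$, I compute
$$
\mathscr F\bigl(\mathcal I_\alpha(f)-\mathcal I_\alpha(f)*\Phi_t\bigr)(\xi)
=\hat f(\xi)\,\rho^*(\xi)^{-\alpha}\bigl(1-\hat\Phi(\delta_t^*\xi)\bigr)
=t^{\alpha}\,\hat f(\xi)\,\hat\psi(\delta_t^*\xi),
$$
where $\hat\psi(\eta)=\rho^*(\eta)^{-\alpha}(1-\hat\Phi(\eta))$; equivalently $\psi=\mathcal L_\alpha-\Phi*\mathcal L_\alpha$ with $\mathscr F(\mathcal L_\alpha)(\xi)=\rho^*(\xi)^{-\alpha}$, the analogue of \eqref{eee1.6}. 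By Proposition \ref{prop0}(iv) this means $\mathcal I_\alpha(f)-\mathcal I_\alpha(f)*\Phi_t=t^{\alpha}f*\psi_t$, so that the factor $t^{2\alpha}$ in \eqref{cb2} cancels $t^{-1-2\alpha}$ and $C_\alpha(f)=g_\psi(f)$.

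Second, I would record the properties of $\psi$. Since $\Phi\ge0$ is even with $\int\Phi=1$, the transform $\hat\Phi$ is real, $\hat\Phi(\eta)<1$ for $\eta\neq0$, and $1-\hat\Phi(\eta)=O(|\eta|^2)$ near the origin. Combining this with $|\eta|\le\rho^*(\eta)$ for $\rho^*(\eta)\le1$ and with $\hat\Phi(\eta)\to0$ at infinity yields the two-sided bound $|\hat\psi(\eta)|\le C\min\bigl(\rho^*(\eta)^{2-\alpha},\rho^*(\eta)^{-\alpha}\bigr)$, in which both exponents are positive precisely because $0<\alpha<2$; in particular $\hat\psi(0)=0$, i.e.\ $\int\psi=0$. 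The same bound makes $\int_0^\infty|\hat\psi(\delta_t^*\xi)|^2\,dt/t$ finite uniformly in $\xi$, while $\hat\psi(\eta)>0$ for $\eta\neq0$, together with the $\delta_t^*$-invariance of $A(\xi):=\int_0^\infty|\hat\psi(\delta_t^*\xi)|^2\,dt/t$ and compactness of $\{\rho^*=1\}$, gives the nondegeneracy $A(\xi)\ge c_0>0$.

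Third, for the upper bound $\|g_\psi(f)\|_{p,w}\le C\|f\|_{p,w}$ I would regard $f\mapsto\{f*\psi_t\}_{t>0}$ as a convolution operator valued in $L^2((0,\infty),dt/t)$ and invoke the weighted vector-valued Calder\'on--Zygmund theory for the dilations $\delta_t$: the $L^2$ bound is immediate from Plancherel and $\int_0^\infty|\hat\psi(\delta_t^*\xi)|^2\,dt/t\le C$, and the $A_p$-weighted bound follows once $\psi$ is shown to satisfy the requisite size and H\"ormander-type regularity estimates of a standard kernel. For the lower bound I would use a Calder\'on reproducing formula: taking $\hat\Psi=\hat\psi/A$ (note $\psi$, hence $\Psi$, is real and even) gives $\int_0^\infty\hat\psi(\delta_t^*\xi)\hat\Psi(\delta_t^*\xi)\,dt/t\equiv1$, whence $f=\int_0^\infty(f*\psi_t)*\Psi_t\,dt/t$ for $f\in\mathscr S_0$. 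Pairing with $h\in\mathscr S_0$ and applying Cauchy--Schwarz in $dt/t$ bounds $|\langle f,h\rangle|$ by $\int g_\psi(f)\,g_\Psi(h)$; H\"older in the duality $L^p_w$--$L^{p'}(w^{-p'/p})$ together with the upper bound applied to $g_\Psi$ on the dual weight $w^{-p'/p}\in A_{p'}$ then yields $\|f\|_{p,w}\le C\|g_\psi(f)\|_{p,w}$, and Proposition \ref{prop0}(i) passes to general $f$.

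The hard part will be the kernel estimates on $\psi=\mathcal L_\alpha-\Phi*\mathcal L_\alpha$ needed for the weighted (not merely $L^2$) theory. Near infinity the crude homogeneity bound $|\mathcal L_\alpha(x)|\lesssim\rho(x)^{\alpha-\gamma}$ is too weak, so one must extract cancellation from $\psi(x)=\int\Phi(y)\bigl(\mathcal L_\alpha(x)-\mathcal L_\alpha(x-y)\bigr)\,dy$. For $0<\alpha<1$ the first difference already gives decay $\rho(x)^{\alpha-\gamma-1}$, but for $1\le\alpha<2$ one must exploit that $\Phi$ is even (vanishing first moment) to pass to a second difference and then control the non-isotropic derivatives of the $\delta_t$-homogeneous smooth function $\mathcal L_\alpha$ via the Calder\'on--Torchinsky differentiation estimates; this produces decay $\rho(x)^{\alpha-\gamma-2}$, and it is exactly here that the hypothesis $\alpha<2$ is essential and the analysis most delicate.
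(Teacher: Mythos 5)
Your reduction of $C_\alpha$ to the Littlewood--Paley function $g_{\psi}$ with $\psi=\mathscr L_\alpha-\Phi*\mathscr L_\alpha$, the Fourier-side bounds $|\hat\psi(\eta)|\le C\min(\rho^*(\eta)^{2-\alpha},\rho^*(\eta)^{-\alpha})$, and the non-degeneracy of $A(\xi)=\int_0^\infty|\hat\psi(\delta_t^*\xi)|^2\,dt/t$ all match the paper, which obtains Theorem \ref{thm2} as the special case $\Phi=\chi_0$ of Theorem \ref{thm3}. The divergence, and the gaps, are in how you propose to get the two weighted inequalities. For the upper bound you invoke weighted vector-valued Calder\'on--Zygmund theory, which requires a H\"ormander-type regularity estimate for the $L^2(dt/t)$-valued kernel $\{\psi_t\}_{t>0}$; you explicitly defer this, and it is not routine here because $\Phi=\chi_0$ is rough, so $\Phi*\mathscr L_\alpha$ has limited smoothness across $\{\rho=1\}$ and $\psi$ has the singularity $\rho(x)^{\alpha-\gamma}$ at the origin. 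The paper deliberately bypasses regularity altogether: Theorem \ref{thm5} (imported from \cite{Sacze}) gives the weighted bound from the pure size conditions $B_\epsilon(\psi)<\infty$, $D_u(\psi)<\infty$, $\|H_\psi\|_1<\infty$, which follow at once from the pointwise bounds $|\psi(x)|\le C\rho(x)^{\alpha-\gamma}$ near $0$ and $|\psi(x)|\le C\rho(x)^{\alpha-\gamma-[\alpha]-1}$ at infinity (the latter obtained exactly as in your last paragraph, from the moment cancellation of $\Phi$ and Lemma \ref{lem0}). So your step is not wrong in spirit, but the decisive estimates are missing and the route is harder than necessary.

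The more serious gap is in the lower bound. Your Calder\'on reproducing formula with $\hat\Psi=\hat\psi/A$ is fine formally, but the final step requires $\|g_\Psi(h)\|_{p',w^{-p'/p}}\le C\|h\|_{p',w^{-p'/p}}$, and you justify this by ``the upper bound applied to $g_\Psi$.'' That does not follow: $\Psi$ is defined only through its Fourier transform, and whatever kernel estimates you prove for $\psi$ do not transfer to $\Psi=\mathscr F^{-1}(\hat\psi/A)$. Since $g_\Psi(h)=g_\psi(T_{1/A}h)$, what is really needed is that $1/A$ is a Fourier multiplier on the relevant weighted space, i.e.\ the invertibility of the degree-$0$ homogeneous multiplier $A$. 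This is precisely the technical core of the paper's argument (Lemma \ref{lem3}, Corollary \ref{inverse} via Theorem \ref{thm6}, assembled in Theorem \ref{thm7}), where $A$ is only known to be continuous and nonvanishing on $S^{n-1}$. In the present special case one could instead try to show $A\in C^\infty(\Bbb R^n\setminus\{0\})$ and apply Lemma \ref{lem2} to $1/A$, but some such argument must be supplied; as written, the reproducing-formula step silently assumes the very multiplier-inversion result the paper works to establish.
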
  
The range of $\alpha$ in Theorem \ref{thm1} will be extended in 
Theorem \ref{thm4.2} in Section \ref{s4} by considering square functions 
with repeated uses of averaging operation $\dashint_Bf$. 
\par 
We consider square functions generalizing $B_\alpha$ and $C_\alpha$ in 
\eqref{cevv2} and \eqref{cb2}.  
Let $\Phi$ be a bounded function on $\Bbb R^n$ with compact support. 
We say 
$\Phi \in \mathcal M^\alpha$, $\alpha\geq 0$,  
if $\Phi$ satisfies 
\begin{enumerate}   
\item[(i)] 
$\int_{\Bbb R^n} \Phi(x)\, dx=1$; 
\item[(ii)] 
if $\alpha\geq 1$, 
\begin{equation}\label{moment} 
\int_{\Bbb R^n} \Phi(x)x^a \, dx=0  \quad  \text{for all 
multi-indices $a$ with $1\leq |a|\leq [\alpha]$,}   
\end{equation} 
where 
$x^a= x_1^{a_1}\dots x_n^{a_n}$ 
with $a=(a_1, \dots ,a_n)$,  $|a|=a_1+\dots +a_n$,  $a_j\in \Bbb Z$ 
(the set of integers), $a_j\geq 0$, $1\leq j\leq n$, and 
$[\alpha]=\max\{k\in \Bbb Z: k\leq \alpha\}$.        
\end{enumerate} 
\par 
We note that $\mathcal M^\alpha \subset \mathcal M^\beta$ if $\alpha\geq \beta$ and 
$\mathcal M^\alpha =\mathcal M^j$  if  $j\leq \alpha<j+1$, 
$j\geq 0$, $j\in \Bbb Z$.  
If $\Phi$ is even and $1\leq \alpha<2$, we have \eqref{moment}.  
In particular,  
$\chi_0=|B(0,1)|^{-1}\chi_{B(0,1)}\in \mathcal M^\alpha$ 
for  $0\leq \alpha<2$.  
\par 
 Let $\Phi\in \mathcal M^\alpha$ and 
 \begin{equation}\label{cev2} 
 G_\alpha (f)(x)=\left(\int_0^\infty \left|f(x)-
 \Phi_t*f(x)\right|^2 \frac{dt}{t^{1+2\alpha}}\right)^{1/2},  \quad \alpha>0. 
 \end{equation} 
We note that if $\Phi=\chi_0$ in \eqref{cev2}, we have  
$B_\alpha$ of \eqref{cevv2}. 
 Also,  let $\Phi\in \mathcal M^\alpha$ and  
 \begin{equation}\label{cev} 
 H_\alpha (f)(x)=\left(\int_0^\infty \left|\mathcal{I}_\alpha(f)(x)-
\Phi_t*\mathcal{I}_\alpha(f)(x)\right|^2 
\frac{dt}{t^{1+2\alpha}}\right)^{1/2},  \quad 0<\alpha<\gamma.  
 \end{equation} 
If we set $\Phi=\chi_0$ in \eqref{cev},  we get  $C_\alpha$ of \eqref{cb2} 
for $0<\alpha<2$.   
 \par 
We prove the following.  
\begin{theorem}\label{thm3} 
 Let $H_\alpha$ be as in \eqref{cev} and $0<\alpha<\gamma$, 
$1<p<\infty$, $w\in A_p$.   
  Then 
$$\|H_\alpha (f)\|_{p,w} \simeq \|f\|_{p,w}, \quad f\in 
\mathscr S_0(\Bbb R^n).     $$  
\end{theorem}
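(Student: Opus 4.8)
The plan is to identify $H_\alpha$ with a Littlewood-Paley function of the form \eqref{lpopeu} (with the non-isotropic scaling $\psi_t(x)=t^{-\gamma}\psi(\delta_t^{-1}x)$) and then prove the two-sided bound by combining a weighted square-function estimate with a Calder\'on reproducing formula. First I would carry out the Fourier reduction. By Proposition \ref{prop0}(iv) and the homogeneity $\rho^*(\delta_t^*\xi)=t\rho^*(\xi)$ one has $\mathscr F(\mathcal I_\alpha(f)-\Phi_t*\mathcal I_\alpha(f))(\xi)=\rho^*(\xi)^{-\alpha}\bigl(1-\hat\Phi(\delta_t^*\xi)\bigr)\hat f(\xi)=t^\alpha\hat f(\xi)\hat\psi(\delta_t^*\xi)$, where $\hat\psi(\xi)=\rho^*(\xi)^{-\alpha}\bigl(1-\hat\Phi(\xi)\bigr)$. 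Hence $\mathcal I_\alpha(f)-\Phi_t*\mathcal I_\alpha(f)=t^\alpha f*\psi_t$, the factor $t^{2\alpha}$ cancels the weight $t^{-1-2\alpha}\,dt$ in \eqref{cev}, and $H_\alpha(f)=g_\psi(f)$ for this $\psi$.

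Next I would verify that $\psi$ is an admissible kernel. Because $\Phi\in\mathcal M^\alpha$ we have $\hat\Phi(0)=1$, and \eqref{moment} forces $\hat\Phi(\eta)-1=O(|\eta|^{[\alpha]+1})$ near the origin; together with $\rho^*(\xi)\ge|\xi|$ for $|\xi|\le1$ this gives $\hat\psi(\xi)=O(|\xi|^{[\alpha]+1-\alpha})$, so in particular $\int\psi\,dx=\hat\psi(0)=0$, while at infinity $\rho^*(\xi)^{-\alpha}\to0$ and $\hat\Phi\to0$ force $\hat\psi\to0$. Writing $\psi=K_\alpha-K_\alpha*\Phi$, with $K_\alpha$ the ($\delta_t$-homogeneous of degree $\alpha-\gamma$) kernel of $\mathcal I_\alpha$ (locally integrable precisely because $0<\alpha<\gamma$), the compact support and moment cancellation of $\Phi$ produce the decay $|\psi(x)|\lesssim\rho(x)^{-\gamma-(1-\{\alpha\})}$ for large $\rho(x)$ and a locally integrable singularity at the origin, together with a H\"ormander-type regularity estimate. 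The crucial point is the non-degeneracy (Tauberian) condition: writing $\xi=\delta_r^*\theta$ with $\rho^*(\theta)=1$ and substituting $u=tr$,
\[
\int_0^\infty|\hat\psi(\delta_t^*\xi)|^2\,\frac{dt}{t}
=\int_0^\infty u^{-2\alpha}\,|1-\hat\Phi(\delta_u^*\theta)|^2\,\frac{du}{u}=:J(\theta),
\]
which is finite (integrability at $0$ from the order $[\alpha]+1-\alpha>0$, at $\infty$ from $\alpha>0$), strictly positive (since $\hat\Phi(\delta_u^*\theta)\to0$, the integrand tends to $1$), and continuous on the compact $\rho^*$-unit sphere, hence bounded above and below uniformly in $\xi\ne0$.

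With these properties in hand I would obtain the upper bound $\|g_\psi(f)\|_{p,w}\le C\|f\|_{p,w}$ by treating $g_\psi$ as a vector-valued Calder\'on-Zygmund operator with kernel $t\mapsto\psi_t$: $L^2$ boundedness follows from $J(\theta)<\infty$ via Plancherel, and the decay and regularity of $\psi$ yield the weighted $L^p$ bound for $w\in A_p$. For the reverse inequality I would use $J(\theta)>0$ to construct a companion $\Theta$ with $\int_0^\infty\hat\psi(\delta_t^*\xi)\overline{\hat\Theta(\delta_t^*\xi)}\,\frac{dt}{t}=1$ for $\xi\ne0$, giving the reproducing formula $f=\int_0^\infty f*\psi_t*\tilde\Theta_t\,\frac{dt}{t}$ on $\mathscr S_0$. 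Pairing with $g\in\mathscr S_0$, Cauchy-Schwarz in $(x,t)$ and H\"older's inequality give $|\langle f,g\rangle|\le\|g_\psi(f)\|_{p,w}\,\|g_\Theta(g)\|_{p',w'}$ with $w'=w^{-p'/p}\in A_{p'}$; applying the upper bound to $g_\Theta$ on $L^{p'}(w')$ and using the density of $\mathscr S_0$ (Proposition \ref{prop0}(i)) then yields $\|f\|_{p,w}\le C\|g_\psi(f)\|_{p,w}$.

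The main obstacle I anticipate is twofold: establishing the decay and H\"ormander regularity of the rough kernel $\psi$ (recall $\Phi$ need only be bounded with compact support, e.g.\ $\Phi=\chi_0$, so $\psi$ is not smooth), which requires exploiting the structure $\psi=K_\alpha-K_\alpha*\Phi$ and matching the Euclidean moment conditions of $\mathcal M^\alpha$ against the anisotropic homogeneity of $\rho^*$; and constructing a companion $\Theta$ regular enough for $g_\Theta$ to be bounded on the dual weighted space while preserving the reproducing identity.
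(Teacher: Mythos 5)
Your reduction is exactly the paper's: writing $\psi^{(\alpha)}=\mathscr L_\alpha-\mathscr L_\alpha*\Phi$ with $\mathscr L_\alpha=\mathscr F^{-1}(\rho^*(\xi)^{-\alpha})$, checking that $H_\alpha(f)=g_{\psi^{(\alpha)}}(f)$, deriving the cancellation $\int\psi^{(\alpha)}=0$, the decay $|\psi^{(\alpha)}(x)|\lesssim\rho(x)^{\alpha-\gamma}$ near $0$ and $\lesssim\rho(x)^{\alpha-\gamma-[\alpha]-1}$ at infinity via Taylor's formula and \eqref{moment}, and the non-degeneracy of $\sup_{t>0}|\hat\psi^{(\alpha)}(\delta_t^*\xi)|$. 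Where you diverge is in how the two inequalities are then obtained, and both of your replacements have genuine gaps.

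For the upper bound, the paper deliberately does \emph{not} use vector-valued Calder\'on--Zygmund theory: Theorem \ref{thm5} (via \cite[Theorem 1.1]{Sacze}) requires only the size conditions $B_\epsilon(\psi)<\infty$, $D_u(\psi)<\infty$, $\|H_\psi\|_1<\infty$, with no regularity hypothesis. Your plan hinges on ``a H\"ormander-type regularity estimate'' for $\psi^{(\alpha)}$, but $\Phi$ is only bounded with compact support, so $\mathscr L_\alpha*\Phi$ is not smooth on $\supp\Phi$, and for small $\alpha$ the local singularity $\rho(x)^{\alpha-\gamma}$ is not even locally square integrable; the $L^2(dt/t)$-valued H\"ormander condition is therefore not a routine verification and you give no argument for it. For the reverse inequality the gap is sharper: you never construct the companion $\Theta$. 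The natural candidate $\hat\Theta=\hat\psi/m$ with $m(\xi)=\int_0^\infty|\hat\psi(\delta_t^*\xi)|^2\,dt/t$ requires knowing that $m^{-1}$ (or $\Theta$ itself) is well-behaved enough for $g_\Theta$ to be bounded on $L^{p'}(w^{-p'/p})$ --- but $m$ is only known to be continuous and positive on $S^{n-1}$ (smoothness of $m$ would require differentiating under the integral, which fails to converge in general because derivatives in $\xi$ of $\hat\psi(\delta_t^*\xi)$ bring down unbounded powers of $t$). This is precisely the difficulty the paper's Section \ref{s2} is built to resolve: it pairs with $\bar\psi_t$ itself (Lemma \ref{lem3}), accepts that the pairing produces $T_m$ rather than the identity, and then inverts the merely continuous, $\delta_t^*$-homogeneous multiplier $m$ through the approximation and spectral-radius argument of Proposition \ref{prop1}, Theorem \ref{thm6} and Corollary \ref{inverse}. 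Without either that machinery or an explicit admissible $\Theta$, your chain $\|f\|_{p,w}\le\|g_\psi(f)\|_{p,w}\|g_\Theta(g)\|_{p',w'}$ is not justified. You correctly identified both obstacles at the end of your proposal, but identifying them is where the actual proof begins.
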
  
\par 
Applying Theorem \ref{thm3}, we have the following. 
\begin{theorem}\label{thm4} 
 Suppose that $1<p<\infty$, $w\in A_p$ and $0<\alpha<\gamma$.  
Let $G_\alpha$ be as in \eqref{cev2}.  Then 
$f\in W^{\alpha, p}_w$ if and only if $f\in L^p_w$ and 
$G_\alpha(f) \in L^p_w;$  furthermore,  
$$\|\mathcal{I}_{-\alpha}(f)\|_{p,w}
\simeq \|G_\alpha(f)\|_{p,w}.  $$
\end{theorem}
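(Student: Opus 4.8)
The plan is to deduce Theorem \ref{thm4} from Theorem \ref{thm3} in the same way Theorem \ref{thm1} follows from Theorem \ref{thm2}, the bridge being the pointwise identity
\[
G_\alpha(\mathcal I_\alpha h)=H_\alpha(h),
\]
which is immediate from \eqref{cev2} and \eqref{cev} because both square functions are built from the same $\Phi\in\mathcal M^\alpha$. Applying Theorem \ref{thm3} to $h\in\mathscr S_0$ and writing $\phi=\mathcal I_\alpha h\in\mathscr S_0$ (recall $\mathcal I_\alpha$ is a bijection of $\mathscr S_0$) yields the a priori equivalence $\|G_\alpha(\phi)\|_{p,w}\simeq\|\mathcal I_{-\alpha}(\phi)\|_{p,w}$ for $\phi\in\mathscr S_0$, which is the core estimate. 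First I would record that each half of Theorem \ref{thm3} extends from $\mathscr S_0$ to all of $L^p_w$: absorbing the dilation factor $t^\alpha$ through $\rho^*(\delta_t^*\xi)=t\rho^*(\xi)$ shows that $H_\alpha$ coincides with a Littlewood--Paley operator $g_\theta$ whose kernel satisfies $\widehat\theta(\xi)=(1-\widehat\Phi(\xi))\rho^*(\xi)^{-\alpha}$, so its $L^p_w$ boundedness and the lower bound pass to $L^p_w$ by the density in Proposition \ref{prop0}(i) together with Fatou's lemma.

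For the forward implication, let $f\in W^{\alpha,p}_w$, so $g:=\mathcal I_{-\alpha}(f)\in L^p_w$ and $f=\mathcal I_\alpha(g)$. Writing $\mathcal I_\alpha(g)$ as convolution with the kernel of Fourier multiplier $\rho^*(\xi)^{-\alpha}$ (convergent for $0<\alpha<\gamma$ and $g\in L^p_w$), a Fourier computation gives $f-\Phi_t*f=t^\alpha\,\theta_t*g$ almost everywhere, whence $G_\alpha(f)=g_\theta(g)=H_\alpha(g)$ as functions. The extended upper bound then yields $\|G_\alpha(f)\|_{p,w}=\|H_\alpha(g)\|_{p,w}\le C\|g\|_{p,w}=C\|\mathcal I_{-\alpha}(f)\|_{p,w}$; in particular $G_\alpha(f)\in L^p_w$.

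The substance is the converse: given $f\in L^p_w$ with $G_\alpha(f)\in L^p_w$, one must produce $g\in L^p_w$ with $f=\mathcal I_\alpha(g)$ and $\|g\|_{p,w}\lesssim\|G_\alpha(f)\|_{p,w}$. I would argue by duality against $\mathscr S_0$, which is dense in the dual space $L^{p'}(w')$, $w'=w^{-p'/p}\in A_{p'}$. Put $A_tf=t^{-\alpha}(f-\Phi_t*f)$, so $G_\alpha(f)=(\int_0^\infty|A_tf|^2\,dt/t)^{1/2}$. A homogeneity computation shows that $N(\xi):=\int_0^\infty|\widehat{A_t}(\xi)|^2\,dt/t$ is $\delta^*$-homogeneous of degree $2\alpha$, continuous and nonvanishing, so one may define operators $B_t$ by $\widehat{B_th}=\widehat{A_t}\,\rho^*{}^{\alpha}N^{-1}\widehat h$, giving the Calder\'on reproducing identity $\int_0^\infty A_t^*B_t\,dt/t=\mathcal I_{-\alpha}$. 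Absorbing the dilation factor once more shows $B_th=\theta_t*(\tilde A(D)^{-1}h)$ for a homogeneous degree-zero multiplier $\tilde A(D)$, so the companion square function satisfies
\[
\Big\|\big(\textstyle\int_0^\infty|B_th|^2\,dt/t\big)^{1/2}\Big\|_{p',w'}=\|H_\alpha(\tilde A(D)^{-1}h)\|_{p',w'}\lesssim\|h\|_{p',w'}
\]
by Theorem \ref{thm3} at the exponent $p'$ and weight $w'$, together with the multiplier bound for $\tilde A(D)$. Pairing, then applying Cauchy--Schwarz in $t$ and H\"older in $x$,
\[
\Big|\int f\,\mathcal I_{-\alpha}(h)\,dx\Big|=\Big|\int_0^\infty\!\!\int A_tf\,\overline{B_th}\,dx\,\frac{dt}{t}\Big|\le C\,\|G_\alpha(f)\|_{p,w}\,\|h\|_{p',w'},
\]
so $h\mapsto\int f\,\mathcal I_{-\alpha}(h)$ is a bounded functional on $\mathscr S_0\subset L^{p'}(w')$; duality furnishes $g\in L^p_w$ with $\int gh=\int f\,\mathcal I_{-\alpha}(h)$ for all $h\in\mathscr S_0$ and $\|g\|_{p,w}\lesssim\|G_\alpha(f)\|_{p,w}$. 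Replacing $h$ by $\mathcal I_\alpha(h)$ turns this into $\int fh=\int g\,\mathcal I_\alpha(h)$, i.e. $f=\mathcal I_\alpha(g)$ in the sense of \eqref{def1}; hence $f\in W^{\alpha,p}_w$, $g=\mathcal I_{-\alpha}(f)$, and combining with the forward bound gives $\|\mathcal I_{-\alpha}(f)\|_{p,w}\simeq\|G_\alpha(f)\|_{p,w}$.

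The step I expect to be the main obstacle is the rigorous justification of this duality pairing when $f$ is only in $L^p_w$: one must verify the convergence of the $t$-integral in the reproducing identity, the Fubini interchange, and the finiteness of $N(\xi)$ as $\xi\to0$ and $\xi\to\infty$. The behavior near the origin is exactly where the moment conditions \eqref{moment} defining $\mathcal M^\alpha$ and the restriction $0<\alpha<\gamma$ enter, since they force $1-\widehat\Phi$ to vanish to sufficiently high order. These are standard features of Littlewood--Paley characterizations of potential spaces, and I would dispose of them by first establishing the identity and the estimate for $f,h\in\mathscr S_0$ and then extending by the density and the $L^p_w$-continuity set up at the outset.
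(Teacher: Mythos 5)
Your proposal is correct in outline but takes a genuinely different route from the paper's, most visibly in the converse direction. The paper never invokes a Calder\'on reproducing formula: it regularizes $f$ to $f_{m,\epsilon}=(f_{(m)})^{(\epsilon)}\in\mathscr S_0$ (truncation in space and in value via Lemma \ref{lem4}, then frequency localization via Lemma \ref{lem5}), applies Theorem \ref{thm3} only to these Schwartz-class approximants, passes to the limit in $m$ by Fatou's lemma to obtain the two-sided bound $\|G_\alpha(f^{(\epsilon)})\|_{p,w}\simeq\|\mathcal I_{-\alpha}^{(\epsilon/2)}f^{(\epsilon)}\|_{p,w}$ uniformly in $\epsilon$, and then produces the function $g$ with $f=\mathcal I_\alpha(g)$ by weak compactness of the bounded family $\mathcal I_{-\alpha}^{(\epsilon_k/2)}f^{(\epsilon_k)}$ in $L^p_w$, identifying the weak limit by testing against $\mathscr S_0$. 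Your substitute --- the companion family $B_t$, the identity $\int_0^\infty A_t^*B_t\,dt/t=\mathcal I_{-\alpha}$, and duality against $\mathscr S_0\subset L^{p'}(w^{-p'/p})$ --- is the classical alternative and does work here: $\rho^*(\xi)^{2\alpha}N(\xi)^{-1}$ is a continuous, nonvanishing, degree-zero multiplier, smooth off the origin since $\hat\Phi$ is entire, so Lemma \ref{lem2} together with Theorem \ref{thm3} (or Theorem \ref{thm5}) controls the $B_t$-square function at the dual exponent and weight; the convergence of $N$ at $t\to0$ is exactly where \eqref{moment} and $\alpha<\gamma$ enter, as you say. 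What your route buys is that it avoids the weak-compactness extraction; what the paper's route buys is that every application of Theorem \ref{thm3} happens on $\mathscr S_0$, so no reproducing identity ever has to be justified for rough $f$ --- which is precisely the step you flag as the main obstacle. One claim you should repair in the forward direction: for $g\in L^p_w$ with a general $A_p$ weight the convolution defining $\mathcal I_\alpha(g)$ is \emph{not} absolutely convergent (the kernel $\mathscr L_\alpha$ is comparable to $\rho(x)^{\alpha-\gamma}$, which by \eqref{1.9polar} fails to be integrable at infinity), so the identity $f-\Phi_t*f=t^\alpha\theta_t*g$ cannot be read off from a pointwise convolution formula; it must be verified weakly by testing against $\mathscr S_0$, using that the \emph{difference} kernel $\mathscr L_\alpha-\mathscr L_\alpha*\Phi$ is integrable --- this is in effect what the paper's identity $\mathcal I_{-\alpha}^{(\epsilon/2)}f^{(\epsilon)}=g^{(\epsilon)}$ accomplishes.
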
  
Theorems \ref{thm1} and \ref{thm2} follow from Theorems \ref{thm4} 
and \ref{thm3}, respectively. The proofs of Theorems \ref{thm3} 
and \ref{thm4} will be given in Section \ref{s3}. 
To prove Theorem \ref{thm3},  
we consider Littlewood-Paley functions
\begin{equation}\label{lpop}
g_{\psi}(f)(x) = \left( \int_0^{\infty}|f*\psi_t(x)|^2
\,\frac{dt}{t} \right)^{1/2},   
\end{equation} 
where $\psi_t(x)=t^{-\gamma}\psi(\delta_t^{-1}x)$ with   
 $\psi\in L^1(\Bbb R^n)$ satisfying \eqref{cancell}. 
Then we can see that $H_\alpha(f)=g_{\psi^{(\alpha)}}$ for 
some $\psi^{(\alpha)}$ analogous to the one in \eqref{eee1.6}.  
We shall prove Theorem \ref{thm3} by 
applying Theorem \ref{thm5} below in Section \ref{s2}, which is a 
result for parabolic Littlewood-Paley functions  complementing 
the boundedness result given in \cite{Sacze} and generalizing 
 \cite[Corollary $2.11$]{Sa4} to the case of non-isotropic dilations. 
\par 
The proof of Theorem \ref{thm5} will be completed by applying 
Theorem \ref{thm7} below in Section \ref{s2}, which provides the 
estimates 
\begin{equation}\label{eee1.20} 
\|f\|_{p,w}\leq C\|g_\psi(f)\|_{p,w} 
\end{equation} 
under certain conditions. 
Theorem \ref{thm7} is proved by Corollary \ref{inverse} 
 in Section \ref{s2}, which is a result on the invertibility of 
Fourier multipliers homogeneous of degree $0$ with respect to $\delta_t^*$ 
generalizing \cite[Corollary $2.6$]{Sa4} to the case of general homogeneity. 
Corollary \ref{inverse} will follow from a more general result (Theorem 
\ref{thm6}).  
\par 
Here we review some recent developments of the theory related to 
the results given in this note 
after the article \cite{AMV} (see also the remarks at the end of this note). 
Theorem A was generalized to the weighted Sobolev spaces by \cite{HL}. Also, 
Theorems A and B were extended to the weighted Sobolev spaces in \cite{Sa2} by 
applying a theorem in \cite{Sa} for the boundedness of 
Littlewood-Paley functions $g_\psi$ in \eqref{lpopeu}  
on the weighted $L^p$ spaces, which is 
partly a special case of Theorem \ref{thm5} in Section \ref{s2}.  
In \cite{Sa2} it was shown that the theorem of \cite{Sa} is 
particularly suitable for handling the square functions 
in Theorem \ref{thm3} above for the case of the Euclidean structures 
(with the Euclidean norm and the ordinary dilation). 
Some results of \cite{Sa2} were generalized in \cite{Sa4} 
by introducing the function class $\mathcal M^\alpha$ and 
by proving the weighted $L^p$ norm equivalence between $g_\psi(f)$ in 
\eqref{lpopeu} and $f$,  
part of which was not included in \cite{Sa}; 
the estimates in \eqref{eee1.20} in the case of the Euclidean structures  
for sufficiently large class of $\psi$ and $p\in (1, \infty)$, 
$w\in A_p$ were absent from   \cite{Sa}. 
In \cite{Sa5} and \cite{Sa4}, discrete parameter versions of 
Littlewood-Paley functions $g_\psi(f)$ in \eqref{lpopeu} of the form   
\begin{equation*} 
\Delta_\psi(f)(x)
=\left(\sum_{k=-\infty}^{\infty}\left|f*\psi_{2^k}(x)\right|^2  \right)^{1/2}  
\end{equation*}
 are also considered to characterize 
the Sobolev spaces. 
See also \cite{HL} and \cite{Sa6} for applications of the 
square function   
\begin{equation*}\label{esr1.1} 
D^\alpha(f)(x)=\left(\int_0^\infty \left|t^{-\alpha}\int_{S^{n-1}}
(f(x-t\theta)-f(x))\, d\sigma(\theta) \right|^2\, \frac{dt}{t}\right)^{1/2}    
\end{equation*} 
in the theory of Sobolev spaces. 
\par 
In Section \ref{s4}, we shall establish another characterization of 
the Sobolev spaces $W^{\alpha,p}_w$ similar to Theorem \ref{thm1} 
(Theorem \ref{thm4.2}), which is novel even in the case of the Euclidean 
structures. In Theorem \ref{thm1}, the averaging operator 
$\dashint_{B}f$ is used to define the square function $B_\alpha(f)$ 
in \eqref{cevv2} which 
is applied to characterize $W^{\alpha,p}_w$ for $\alpha\in (0, 2)$. 
In Theorem \ref{thm4.2} we shall extend the range of $\alpha$ 
 by introducing square functions which are defined with repeated uses of 
averaging operation.   
\par 
Finally, in Section \ref{s5} we shall illustrate by example how 
 the Sobolev spaces $W_w^{\alpha, p}$ defined above can be characterized 
by distributional derivatives in some cases,  
by the arguments similar to the one in the proof of 
\cite[Theorem $3$ of Chap. V]{St}.

\section{Invertibility of  Fourier multipliers homogeneous  
with respect to $\delta_t^*$ and Littlewood-Paley operators}\label{s2}

We consider a majorant of $\psi$ defined by 
$$H_{\psi}(x) =h(\rho(x))= \sup_{\rho(y)\geq \rho(x)}|\psi(y)|$$
and  two seminorms $B_{\epsilon}$ and $D_u$ defined as 
$$B_{\epsilon}(\psi) = \int_{|x|>1}|\psi(x)|\,|x|^{\epsilon}\,dx   
\qquad \text{ for \quad  $\epsilon >0$} , $$
$$D_u(\psi)=\left(\int_{|x|<1}|\psi(x)|^u\,dx\right)^{1/u} \qquad \text{for \quad $u>1$}.$$  
 In proving Theorem \ref{thm3} we apply the following result.  
\begin{theorem}\label{thm5}  
Suppose that  $\psi \in L^1(\Bbb R^n)$ satisfies \eqref{cancell}. 
Let $\epsilon>0, u>1$ and $C_j>0$, $1\leq j\leq 3$. 
Suppose that 
\begin{enumerate}
\item[(1)]  $B_{\epsilon}(\psi) \leq C_1;$  
\item[(2)]  $D_u(\psi)\leq C_2;$ 
\item[(3)]  
$\|H_\psi\|_1\leq C_3$.  
\end{enumerate} 
Then $g_\psi$ defined by \eqref{lpop}
 is bounded on $L^p_w:$   
\begin{equation}\label{eq1}
\|g_\psi(f)\|_{p,w} \leq C\|f\|_{p,w} \quad 
\text{for all $p \in (1, \infty)$ and $w \in A_p$, }
\end{equation}  
where the constant $C$ depends only on $p$, $w$, $\epsilon$, $u$ 
and $C_j$, $1\leq j\leq 3$ and does not depend on $\psi$ in the 
other regards. 
If we further assume the non-degeneracy condition$:$  
\begin{equation}\label{parabnondeg} 
\sup_{t>0} |\hat{\psi}(\delta_t^*\xi)|>0 \quad \text{for $\xi\neq 0$,}
\end{equation} 
then we also have the reverse inequality of \eqref{eq1} and hence 
$$ 
\|g_\psi(f)\|_{p,w}\simeq \|f\|_{p,w} 
\quad \text{for all $p \in (1, \infty)$ and $w \in A_p$.}$$
\end{theorem}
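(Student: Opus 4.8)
The plan is to realize $g_\psi$ as the norm of a Hilbert-space-valued convolution operator and to prove the two inequalities by complementary means: the upper bound \eqref{eq1} by weighted Calder\'on--Zygmund theory on the space of homogeneous type $(\Bbb R^n,\rho,dx)$, and the reverse bound by a duality (polarization) argument that reduces matters to the invertibility of a single Fourier multiplier homogeneous of degree $0$ with respect to $\delta_t^*$. Set $\mathcal H=L^2((0,\infty),dt/t)$ and $Tf(x)=(f*\psi_t(x))_{t>0}$, so that $g_\psi(f)(x)=\|Tf(x)\|_{\mathcal H}$; thus $T$ is convolution with the $\mathcal H$-valued kernel $K(x)=(\psi_t(x))_{t>0}$, and everything is phrased in terms of the $\rho$-metric.

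For the upper bound I would first establish the unweighted $L^2$ estimate. By Proposition \ref{prop0}(iv) and Plancherel, $\|g_\psi(f)\|_2^2=\int_{\Bbb R^n}|\hat f(\xi)|^2 m(\xi)\,d\xi$, where $m(\xi)=\int_0^\infty|\hat\psi(\delta_t^*\xi)|^2\,dt/t$; writing $\delta_t^*\xi=\delta_{t\rho^*(\xi)}^*\theta$ with $\rho^*(\theta)=1$ shows $m$ depends only on the $\rho^*$-direction of $\xi$, and one checks $m\in L^\infty$. Here the cancellation \eqref{cancell} together with the moment control from $B_\epsilon$ and $D_u$ gives $|\hat\psi(\eta)|\lesssim\rho^*(\eta)^{\min(\epsilon,1)}$ for $\rho^*(\eta)\le1$, making the small-$t$ part of the $t$-integral converge, while conditions (1),(2) control the high-frequency part. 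Next I would verify that $K$ is a vector-valued Calder\'on--Zygmund kernel adapted to $\rho$: the size bound $\|K(x)\|_{\mathcal H}\lesssim\rho(x)^{-\gamma}$ and the H\"ormander regularity $\int_{\rho(x)\ge2\rho(y)}\|K(x-y)-K(x)\|_{\mathcal H}\,dx\le C$. Condition (3), $\|H_\psi\|_1\le C_3$, furnishes the integrable radial majorant needed for the size estimate (cf.\ Proposition \ref{prop0}(iii)), and (1),(2) supply the cancellation and smoothness used in the H\"ormander integral; all constants depend only on $\epsilon,u$ and $C_1,C_2,C_3$. The weighted theory for $\mathcal H$-valued singular integrals on spaces of homogeneous type then yields \eqref{eq1} for all $1<p<\infty$ and $w\in A_p$.

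For the reverse inequality, assume \eqref{parabnondeg}. Since $m(\delta_r^*\xi)=m(\xi)$, $m$ is homogeneous of degree $0$; it is continuous and, by \eqref{parabnondeg} together with the continuity of $\hat\psi$, strictly positive, hence bounded below on the compact sphere $\{\rho^*(\xi)=1\}$, so $m(\xi)\ge c>0$. Let $T_m$ and $T_{1/m}$ be the multiplier operators with symbols $m$ and $1/m$, so that $f=T_{1/m}T_m f$ on $\mathscr S_0$. The polarization identity
\[
\langle T_m f,h\rangle=\int_0^\infty\langle f*\psi_t,\,h*\psi_t\rangle\,\frac{dt}{t}
\]
(obtained from Plancherel as above), combined with the Cauchy--Schwarz inequality in $(x,t)$ and H\"older, gives $|\langle T_m f,h\rangle|\le\int g_\psi(f)\,g_\psi(h)\,dx\le\|g_\psi(f)\|_{p,w}\|g_\psi(h)\|_{p',w'}$, where $w'=w^{-p'/p}\in A_{p'}$. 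Since the upper bound already applies to $g_\psi$ on $L^{p'}_{w'}$, duality yields $\|T_m f\|_{p,w}\le C\|g_\psi(f)\|_{p,w}$, and then $\|f\|_{p,w}=\|T_{1/m}T_m f\|_{p,w}\le C\|g_\psi(f)\|_{p,w}$, provided $T_{1/m}$ is bounded on $L^p_w$.

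The crux---and the step I expect to be the main obstacle---is precisely this last provision: the weighted boundedness of $T_{1/m}$, i.e.\ the invertibility on $L^p_w$ of a symbol homogeneous of degree $0$ with respect to $\delta_t^*$ that is bounded above and below. Because $m$ is built from the merely continuous $\hat\psi$, one cannot invoke a Mikhlin--H\"ormander smoothness hypothesis, so a dedicated weighted multiplier theorem for degree-$0$ $\delta^*$-homogeneous symbols is needed; this is exactly what Theorem \ref{thm6} and Corollary \ref{inverse} are designed to provide. Establishing invertibility of such symbols on $L^p_w$ in the non-isotropic weighted setting is the heart of the matter, whereas the remaining verifications---the uniform bound $m\in L^\infty$, the continuity and strict positivity of $m$ on the sphere, and the kernel estimates---are technical but routine consequences of conditions (1)--(3).
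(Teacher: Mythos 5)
The genuine gap is in your proof of the upper bound \eqref{eq1}. Conditions (1)--(3) are pure size and cancellation hypotheses: $B_\epsilon$ and $D_u$ control $|\psi|$ away from and near the origin, and $\|H_\psi\|_1$ provides an integrable radial majorant, but none of them constrains the modulus of continuity of $\psi$. Consequently the H\"ormander regularity $\int_{\rho(x)\ge 2\rho(y)}\|K(x-y)-K(x)\|_{\mathcal H}\,dx\le C$ of the $\mathcal H$-valued kernel $K(x)=(\psi_t(x))_{t>0}$ does not follow from them: for example, for $\psi(x)=\sin(N\langle e_1,x\rangle)\chi_{\{1\le \rho(x)\le 2\}}(x)$ (corrected to have mean zero) the constants $C_1,C_2,C_3$ are uniform in $N$ while the H\"ormander integral grows like $\log N$, and suitable lacunary superpositions satisfy (1)--(3) with the H\"ormander condition failing outright. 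So the vector-valued Calder\'on--Zygmund route cannot yield a bound depending only on $p,w,\epsilon,u,C_1,C_2,C_3$, which is part of the assertion of the theorem. Your sentence ``(1),(2) supply the cancellation and smoothness used in the H\"ormander integral'' is precisely the step that fails. This is why the paper does not argue this way: it invokes \cite[Theorem 1.1]{Sacze}, whose proof decomposes $\psi$ itself into frequency-localized pieces and combines averaged Fourier-decay and oscillatory-integral estimates (which \emph{do} follow from (1)--(3)) with interpolation with change of measure --- the standard device for rough kernels.

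Your treatment of the reverse inequality, by contrast, follows the paper's route almost exactly: polarization to get $\|T_mf\|_{p,w}\le C\|g_\psi(f)\|_{p,w}$ with $m(\xi)=\int_0^\infty|\hat\psi(\delta_t^*\xi)|^2\,dt/t$, followed by invertibility of the degree-zero $\delta_t^*$-homogeneous symbol $m$ on $L^p_w$ (Corollary \ref{inverse}, resting on Theorem \ref{thm6} and Proposition \ref{prop1}); you correctly identify that invertibility as the crux. The one point you pass over too quickly there is the continuity of $m$ on $S^{n-1}$: since $\hat\psi$ need not decay pointwise at infinity, continuity of $\hat\psi$ alone does not give it. The paper derives it from the averaged decay $\int_{2^k}^{2^{k+1}}|\hat\psi(\delta_t^*\xi)|^2\,dt/t\le C\min(2^{k\epsilon},2^{-k\epsilon})$ of \cite[Lemmas 3.1 and 3.3]{Sacze}, which gives uniform convergence of the truncated integrals; that point is fixable, but it again requires exactly the rough-kernel estimates your upper-bound argument tries to avoid.
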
   
By \cite[Theorem $1.1$]{Sacze}, which generalizes a result of \cite{Sa} to 
the case of non-isotropic dilations, we have the boundedness \eqref{eq1} 
under the conditions (1), (2), (3) of Theorem \ref{thm5} and the quantitative 
property of the constant $C$ specified follows by checking the proof given in 
\cite{Sacze}.  The proof of \cite[Theorem $1.1$]{Sacze} is based on 
estimates for certain oscillatory integrals in  \cite{Sa3}. 

\begin{remark}\label{re2.2} 
If there exist positive numbers $\sigma_1, \sigma_2$ such that 
$$|\psi(x)|\leq C(1+\rho(x)^{-1})^{\gamma -\sigma_1}
(1+\rho(x))^{-\gamma-\sigma_2} \quad \text{for all $x\in \Bbb R^n$}, $$  
then the conditions (1), (2), (3) of Theorem \ref{thm5} are satisfied with 
some $\epsilon$, $u$ and $C_j$, $1\leq j\leq 3$.  To see this the formula 
\eqref{1.9polar} is useful. 
\end{remark}
\par 
To prove the reverse inequality of \eqref{eq1},  
we apply a result on the invertibility on the weighted 
$L^p$ spaces of Fourier multipliers homogeneous  with respect to $\delta_t^*$. 
Let $m\in L^\infty(\Bbb R^n)$,  $w\in A_p$, $1<p<\infty$.  
The Fourier multiplier operator $T_m$ is defined by 
\begin{equation}\label{fmo}
T_m(f)(x)=\int_{\Bbb R^n} m(\xi)\hat{f}(\xi)
e^{2\pi i \langle x,\xi\rangle}\, d\xi.    
\end{equation} 
We say that $m$ is a Fourier multiplier for $L^p_w$ and write
 $m\in M^p_w$ (we also write $M^p(w)$ for $M^p_w$) 
if there exists a constant $C>0$ such that 
\begin{equation}\label{eq2}
\|T_m(f)\|_{p, w}\leq C\|f\|_{p,w} \quad 
\text{for all $f\in \mathscr S$.}
\end{equation} 
We define $\|m\|_{M^p(w)}$ to be the infimum of the constants $C$ 
 satisfying \eqref{eq2}.   
Since $\mathscr S$ is dense in $L^p_w$,  we have a unique extension of 
$T_m$ to a bounded linear operator on $L^p_w$ if $m\in M^p_w$.  
We observe that $M^p(w)=M^{p'}(\widetilde{w}^{-p'/p})$ by duality, where 
 $\widetilde{w}(x)=w(-x)$. 
See \cite{KW} for relevant results.  
\par  
We need the following result generalizing \cite[Theorem 2.5]{Sa4} to 
the case of non-isotropic dilations. 
\begin{theorem}\label{thm6}  
Let $m$ be a bounded function on $\Bbb R^n$ which is continuous on 
$\Bbb R^n \setminus\{0\}$.  Suppose that $m$ is homogeneous of degree $0$ 
with respect to $\delta^*_t$ and that $m(\xi)\neq 0$ for all 
$\xi\in \Bbb R^n \setminus\{0\}$. 
Also, suppose that $m\in M^r_v$ for all $r\in (1,\infty)$ and all $v\in A_r$. 
Let $1<p<\infty,  w\in A_p$ and 
let $F(z)$ be holomorphic in $D=\Bbb C\setminus\{0\}$.  
Then  
$F(m(\xi))\in M^p_w$. 
\end{theorem}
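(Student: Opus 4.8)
The plan is to realize $F(m)$ through the holomorphic (Riesz--Dunford) functional calculus in the commutative Banach algebra $M^p_w$, so that the whole statement collapses to a single invertibility fact. First I would record the algebraic structure. Since $T_{m_1}T_{m_2}=T_{m_1m_2}$, the space $M^p_w$ is a commutative unital Banach algebra under pointwise multiplication, with $\|m_1m_2\|_{M^p(w)}\le\|m_1\|_{M^p(w)}\|m_2\|_{M^p(w)}$, and it embeds continuously in $L^\infty$. Because $m$ is continuous on $\Bbb R^n\setminus\{0\}$, homogeneous of degree $0$ with respect to $\delta_t^*$, and nowhere zero, its range equals $K:=m(\{\rho^*=1\})$, a compact connected subset of $\Bbb C\setminus\{0\}$; moreover, via the continuous unital inclusion $M^p_w\hookrightarrow L^\infty$, one has $K=\sigma_{L^\infty}(m)\subseteq\sigma_{M^p(w)}(m)$.

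The key observation is that the class $\mathcal A$ of \emph{admissible} symbols---bounded, continuous off the origin, homogeneous of degree $0$ with respect to $\delta_t^*$, nowhere vanishing, and lying in $M^r_v$ for every $r\in(1,\infty)$ and every $v\in A_r$---is stable under the operations I need. It is closed under $n\mapsto\zeta-n$ for $\zeta\notin\mathrm{range}(n)$ (constants are multipliers), and under complex conjugation: writing $(Jf)(x)=\overline{f(-x)}$ one checks $T_{\bar n}=JT_nJ$, and since $J$ is a conjugate-linear isometry $L^p_{\tilde w}\to L^p_w$ while $A_p$ is invariant under $x\mapsto-x$ (because $\rho(-x)=\rho(x)$), membership in all $M^r_v$ passes to $\bar n$. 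Hence $|n|^2=n\bar n$ is again admissible, positive, and bounded below. Therefore it suffices to prove the single statement that \emph{every admissible symbol is invertible in $M^p_w$}. Applying this to $\zeta-m$ gives $(\zeta-m)^{-1}\in M^p_w$ for all $\zeta\notin K$, so $\sigma_{M^p(w)}(m)=K$, and $F(m)$ is assembled as
\[
F(m)=\frac{1}{2\pi i}\oint_\Gamma F(\zeta)(\zeta-m)^{-1}\,d\zeta,
\]
a Bochner integral of a continuous $M^p_w$-valued integrand over any cycle $\Gamma\subset\Bbb C\setminus\{0\}$ winding once around $K$ and not around $0$; by Cauchy's formula the integrand integrates pointwise to $F(m(\xi))$, identifying this element with $F\circ m$ and placing it in $M^p_w$.

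It remains to invert an admissible symbol, and via $1/m=\bar m\cdot|m|^{-2}$ this reduces to inverting a positive admissible symbol $a$ with range in some $[c_0,\Lambda]\subset(0,\infty)$. Fixing $\lambda>\Lambda$ and setting $e=1-a/\lambda$, which is again admissible with $\mathrm{range}(e)\subset(0,1)$ and hence $\|e\|_\infty<1$, one has $1/a=\lambda^{-1}(1-e)^{-1}=\lambda^{-1}\sum_{k\ge0}e^k$, and this Neumann series converges in $M^p_w$ as soon as the spectral radius $r_{M^p(w)}(e)=\lim_k\|e^k\|_{M^p(w)}^{1/k}$ is strictly below $1$. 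Since $\|e^k\|_{M^p(w)}\ge\|e^k\|_\infty=\|e\|_\infty^k$, the content is the reverse bound $r_{M^p(w)}(e)\le\|e\|_\infty$, i.e.\ that the multiplier norms of the powers $e^k$ grow only subexponentially beyond their sup norms. Granting this, the series converges, $1/a\in M^p_w$, and the chain above closes.

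The main obstacle is precisely this last estimate, $\|e^k\|_{M^p(w)}\le C(1+k)^s\|e\|_\infty^k$ with $s$ independent of $k$, which is exactly where the full strength of the hypothesis ``$m\in M^r_v$ for all $r$ and all $v\in A_r$'' must enter, rather than the crude submultiplicative bound $\|e^k\|\le\|e\|^k$. I would attack it by extracting a \emph{quantitative} homogeneous-multiplier bound---controlling $\|n\|_{M^p(w)}$ for admissible $n$ by finitely many seminorms of $n$ on the sphere $\{\rho^*=1\}$ together with the oscillatory-integral and Littlewood--Paley estimates underlying Theorem \ref{thm5} and \cite{Sacze, Sa3}, with explicit constant dependence---and then interpolating against the isometric identity $\|e^k\|_{M^2}=\|e\|_\infty^k$ on unweighted $L^2$. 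A balanced Stein--Weiss complex interpolation between $(2,1)$ and weighted endpoints close to $(p,w)$, where the hypothesis furnishes boundedness, should produce the subexponential growth and hence $r_{M^p(w)}(e)<1$. Everything else---the Banach-algebra formalism, the conjugation and translation stability of $\mathcal A$, and the Riesz--Dunford assembly---is routine once this quantitative spectral-radius estimate is in hand.
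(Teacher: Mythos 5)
Your framing via the Riesz--Dunford calculus in the Banach algebra $M^p_w$, and the reduction of $F(m)\in M^p_w$ to the invertibility of $\zeta-m$ (hence, via $1/n=\bar n\,|n|^{-2}$, to the invertibility of a positive admissible symbol), is a legitimate reorganization of the problem. But the entire content of the theorem is then concentrated in the one step you leave as a sketch: the claim that $\rho_{p,w}(e)=\lim_k\|e^k\|_{M^p(w)}^{1/k}\leq\|e\|_\infty$ for the admissible symbol $e=1-a/\lambda$. This is essentially equivalent to the statement that the $M^p_w$-spectrum of $m$ equals its range, i.e.\ to the theorem itself, and neither of the two tools you propose for it can deliver it. First, a Mikhlin-type bound ``by finitely many seminorms of $n$ on the sphere'' (Lemma \ref{lem2} of the paper) requires $[\gamma]+1$ derivatives of the symbol; $m$ is only assumed \emph{continuous} on $S^{n-1}$, so $e$ and $e^k$ have no derivative bounds at all, and no such estimate applies to them. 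Second, the Stein--Weiss interpolation against unweighted $L^2$ puts the exponent on the wrong factor: to realize $L^p(w)$ as $[L^{p_0},L^{p_1}(w_1)]_\theta$ with $w_0\equiv1$ one is forced to take $w_1=w^{p_1/(p\theta)}$, and keeping $w_1\in A_{p_1}$ (via reverse H\"older) forces $\theta$ close to $1$. The resulting bound $\rho_{p,w}(e)\leq\|e\|_\infty^{1-\theta}\,\rho_{p_1,w_1}(e)^{\theta}$ then carries the good factor $\|e\|_\infty<1$ with a tiny exponent $1-\theta$ and the uncontrolled weighted factor with exponent near $1$; since a priori $\rho_{p_1,w_1}(e)$ can only be bounded by $\|e\|_{M^{p_1}(w_1)}$, which may be large, this cannot force $\rho_{p,w}(e)<1$. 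Your back-of-envelope would need $\theta$ small, which is exactly what the weighted interpolation forbids.

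The paper circumvents precisely this obstruction by never estimating the spectral radius of a power of the rough symbol itself. In Proposition \ref{prop1} it mollifies $m$ over the orthogonal group to produce $m_j$, homogeneous of degree $0$ and $C^\infty$ off the origin, to which the Mikhlin-type Lemma \ref{lem2} \emph{does} apply, giving $\|m_j^k\|_{M^r(v)}\leq C_jk^{[\gamma]+1}\|m\|_\infty^k$ and hence $\rho_{r,v}(m_j)\leq\|m\|_\infty$. The interpolation inequality is then applied only to the \emph{difference} $m-m_j$: $\rho_{p,w}(m-m_j)\leq\|m-m_j\|_\infty^{1-\theta}\bigl(\rho_{r,w^s}(m)+\|m\|_\infty\bigr)^{\theta}$, and since $\|m-m_j\|_\infty\to0$ the smallness of the exponent $1-\theta$ is harmless. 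With $\ell=m_j$ so chosen, the proof writes the Cauchy integral over the \emph{moving} circle $\zeta=\ell(\xi)+2\epsilon_0e^{i\theta}$ and expands the resolvent in powers of $(m-\ell)/(2\epsilon_0)$, whose $M^p(w)$-norms decay geometrically because $\rho_{p,w}(m-\ell)<\epsilon_0$, while the Fourier coefficients $N_k(\xi)=\int_0^{2\pi}F(\ell(\xi)+2\epsilon_0e^{i\theta})e^{-ik\theta}\,d\theta$ are smooth homogeneous multipliers with uniformly bounded norms. If you want to salvage your plan, you must import this smoothing-plus-difference mechanism (or an equivalent) to prove the resolvent bound; as written, the key spectral-radius estimate is not merely ``routine once in hand'' --- it is the theorem, and the route you indicate toward it fails.
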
 
\par 
For $m\in M^p_w$, $1<p<\infty$, $w\in A_p$, we consider the spectral radius 
operator  
\begin{equation*}
\rho_{p,w}(m)=\lim_{k\to \infty}\|m^k\|_{M^p(w)}^{1/k}.  
\end{equation*}  
To prove Theorem \ref{thm6}, we need the following.  
\begin{proposition}\label{prop1} 
  Suppose that $1<p<\infty$,  $w\in A_p$ and 
$m\in L^\infty(\Bbb R^n)$.  
Let $m$ be homogeneous of degree $0$ with respect to the dilations 
$\delta_t^*$ and continuous on $S^{n-1}$. 
We assume that  $m\in M^r_v$ for all $r\in (1, \infty)$ and all $v\in A_r$.  
Then, for any $\epsilon>0$, there exists $\ell \in M^p_w$ 
which is homogeneous of degree $0$ with respect to $\delta_t^*$ and in 
$C^\infty(\Bbb R^n\setminus\{0\})$ such that $\|m-\ell\|_{\infty}<\epsilon$ 
and $\rho_{p,w}(m-\ell)<\epsilon$.  
\end{proposition}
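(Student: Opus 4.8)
The plan is to produce the smooth approximants by angular mollification on a cross-section of the $\delta_t^*$-orbits and then to control the spectral radius of the remainder by interpolation with change of measure, anchored at one endpoint by the sharp unweighted $L^2$ identity.

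First I would reduce everything to the compact cross-section $\Sigma=\{\xi:\rho^*(\xi)=1\}$. Since $m$ is homogeneous of degree $0$ with respect to $\delta_t^*$, it is determined by its restriction to $\Sigma$, and by hypothesis $m|_\Sigma$ is continuous, hence uniformly continuous on the compact set $\Sigma$. Using a finite smooth partition of unity subordinate to coordinate charts on $\Sigma$ together with a standard approximate identity at scale $\delta$ in each chart, I would build $m_\delta\in C^\infty(\Sigma)$ with $\|m_\delta-m\|_{L^\infty(\Sigma)}\to0$ as $\delta\to0$, and extend $m_\delta$ to $\Bbb R^n\setminus\{0\}$ by $\delta_t^*$-homogeneity, obtaining $\ell_\delta(\xi)=m_\delta(\delta^*_{1/\rho^*(\xi)}\xi)\in C^\infty(\Bbb R^n\setminus\{0\})$, homogeneous of degree $0$. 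Then $\|m-\ell_\delta\|_\infty=\|m-m_\delta\|_{L^\infty(\Sigma)}\to0$, which settles the sup-norm requirement; being smooth and homogeneous of degree $0$, each $\ell_\delta$ lies in $M^r_v$ for every $r\in(1,\infty)$ and $v\in A_r$ by the Mikhlin-type multiplier theorem for the dilations $\delta_t^*$.

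For the spectral radius I would set $g_\delta=m-\ell_\delta$ and estimate its powers in two ways. Since $T_{g_\delta^k}=(T_{g_\delta})^k$, the multiplier norm is submultiplicative; combining the Plancherel identity $\|g_\delta^k\|_{M^2}=\|g_\delta^k\|_\infty=\|g_\delta\|_\infty^{\,k}$ on the unweighted space with the submultiplicative bound $\|g_\delta^k\|_{M^{p_1}(w_1)}\le B_\delta^{\,k}$, where $B_\delta=\|g_\delta\|_{M^{p_1}(w_1)}$ at a single auxiliary pair $(p_1,w_1)$ with $w_1\in A_{p_1}$, I would interpolate. Applying Stein--Weiss interpolation with change of measure between $(L^2,dx)$ and $(L^{p_1},w_1\,dx)$, and using the self-improvement of the $A_p$ condition (reverse H\"older) to realize the given $(p,w)$ as the $\theta$-interpolant, i.e.\ $1/p=(1-\theta)/2+\theta/p_1$ and $w^{1/p}=w_1^{\theta/p_1}$ with $w_1\in A_{p_1}$, yields
$$\|g_\delta^k\|_{M^p(w)}\le \|g_\delta^k\|_{M^2}^{1-\theta}\,\|g_\delta^k\|_{M^{p_1}(w_1)}^{\theta}\le \big(\|g_\delta\|_\infty^{1-\theta}B_\delta^{\theta}\big)^{k}.$$
Taking $k$-th roots and letting $k\to\infty$ gives $\rho_{p,w}(g_\delta)\le \|g_\delta\|_\infty^{1-\theta}B_\delta^{\theta}$.

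The main obstacle is therefore the uniform bound $\sup_\delta B_\delta<\infty$; granting it, since $\|g_\delta\|_\infty\to0$ while $\theta\in(0,1)$ is fixed, both $\|m-\ell_\delta\|_\infty<\epsilon$ and $\rho_{p,w}(m-\ell_\delta)<\epsilon$ hold once $\delta$ is small, proving the proposition. To secure $\sup_\delta\|\ell_\delta\|_{M^{p_1}(w_1)}<\infty$ I would realize the angular mollification as a convex average of copies of $m$ transported through the chart maps, and show that the convolution kernels associated with $\ell_\delta$ inherit, uniformly in $\delta$, the size and cancellation of the degree-$(-\gamma)$ homogeneous kernel of $m$; the weighted $L^{p_1}$ bound for such rough homogeneous singular integrals depends only on $\|m\|_\infty$, $p_1$, $n$ and $[w_1]_{A_{p_1}}$, none of which degenerate as $\delta\to0$. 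This uniformity—rather than operator-norm convergence of $\ell_\delta$ to $m$, which is neither available nor needed—is the delicate point, and it is precisely what the weaker spectral-radius conclusion is designed to exploit.
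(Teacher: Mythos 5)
Your construction of the smooth, $\delta_t^*$-homogeneous approximants and your interpolation-with-change-of-measure step (anchoring one endpoint at unweighted $L^2$, where $\|g_\delta^k\|_{M^2}=\|g_\delta\|_\infty^k$) both match the paper, which mollifies by averaging over $O(n)$ rather than in charts and invokes its analogue of \cite[Proposition 2.2]{Sa4} for the interpolation. The gap is in the final, admittedly ``delicate'' step: you reduce everything to the uniform bound $\sup_\delta B_\delta=\sup_\delta\|m-\ell_\delta\|_{M^{p_1}(w_1)}<\infty$, and the justification you sketch for it is not valid. You assert that the weighted $L^{p_1}$ norm of the operator with a bounded, degree-$0$ homogeneous, continuous multiplier is controlled by $\|m\|_\infty$ alone (via ``size and cancellation of the degree-$(-\gamma)$ homogeneous kernel''). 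No such theorem exists: a bounded homogeneous multiplier that is merely continuous off the origin need not be an $L^{p_1}$ multiplier at all for $p_1\neq 2$, which is exactly why the proposition must \emph{assume} $m\in M^r_v$. The kernel of $T_m$ is only a tempered distribution; it is not of the form $\Omega(x')\rho(x)^{-\gamma}$ with bounded $\Omega$, so the rough-singular-integral bounds you want to quote do not apply. Moreover, the ``convex average of transported copies of $m$'' heuristic fails here because rotations (or chart maps) do not commute with the non-isotropic dilations $\delta_t^*$: after extending the mollified function from the cross-section by $\delta_t^*$-homogeneity, $\ell_\delta$ is no longer an average of multiplier-norm-preserving conjugates of $m$, so convexity gives no bound on $\|\ell_\delta\|_{M^{p_1}(w_1)}$. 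The only bound available (Lemma \ref{lem2}) degenerates like $\delta^{-[\gamma]-1}$ as $\delta\to 0$.

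The paper's proof shows how to avoid needing any uniform multiplier-norm bound, and this is the idea your argument is missing. One estimates the \emph{powers}: by Lemma \ref{lem2}, $\|m_j^k\|_{M^r(v)}\leq C_j k^{[\gamma]+1}\|m\|_\infty^k$, where the constant $C_j$ is allowed to blow up in $j$ but the growth in $k$ is only polynomial; taking $k$-th roots gives $\rho_{r,v}(m_j)\leq\|m\|_\infty$ \emph{uniformly in} $j$. Then the subadditivity of the spectral radius for commuting operators (Riesz--Nagy) yields $\rho_{r,w^s}(m-m_j)\leq\rho_{r,w^s}(m)+\|m\|_\infty$, a bound independent of $j$, and the interpolation inequality $\rho_{p,w}(m-m_j)\leq\|m-m_j\|_\infty^{1-\theta}\bigl(\rho_{r,w^s}(m)+\|m\|_\infty\bigr)^{\theta}$ finishes the proof. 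If you replace your submultiplicative estimate $\|g_\delta^k\|_{M^{p_1}(w_1)}\leq B_\delta^k$ by this spectral-radius argument, the rest of your outline goes through.
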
 
To prove Proposition \ref{prop1}, we apply the following lemmas. 
\begin{lemma}\label{lem1}
 Let $\eta\in C^\infty(\Bbb R)$, $\supp \eta\subset [1,2]$, $\eta\geq 0$ and 
$\int_0^\infty |\eta(t)|^2\, dt/t=1$. Define a real function $\psi$ in 
$\mathscr S_0(\Bbb R^n)$ 
by $\hat{\psi}(\xi)=\eta(\rho^*(\xi))$. Then 
$$\|g_\psi(f)\|_{p,w}\simeq \|f\|_{p,w} \quad \text{for all $p\in (1,\infty)$ 
and $w\in A_p$.} $$
\end{lemma}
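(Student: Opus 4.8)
The plan is to prove the two-sided estimate by establishing the upper and lower bounds separately, using only the boundedness half of Theorem \ref{thm5} (which, as noted after that theorem, is furnished by \cite[Theorem $1.1$]{Sacze} and is therefore logically prior to the present lemma) together with an exact Calder\'on-type reproducing identity special to this $\psi$. First I would record the elementary properties of $\psi$. Since $\hat\psi(\xi)=\eta(\rho^*(\xi))$ is smooth off the origin and vanishes on $\{\rho^*(\xi)<1\}$, it is in fact $C^\infty$ with compact support contained in $\{1\le \rho^*(\xi)\le 2\}$; hence $\psi\in\mathscr S(\Bbb R^n)$, and since $\hat\psi$ vanishes near $0$ we have $\psi\in\mathscr S_0$. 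The cancellation \eqref{cancell} holds because $\int\psi\,dx=\hat\psi(0)=\eta(0)=0$, and because $\psi$ is Schwartz it satisfies the majorant bound of Remark \ref{re2.2}, so hypotheses (1), (2), (3) of Theorem \ref{thm5} are met. The boundedness part then gives the upper bound $\|g_\psi(f)\|_{p,w}\le C\|f\|_{p,w}$ for all $p\in(1,\infty)$, $w\in A_p$.

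For the lower bound I would exploit the normalization of $\eta$. Using $\rho^*(\delta_t^*\xi)=t\rho^*(\xi)$ and the substitution $s=t\rho^*(\xi)$, one has for each $\xi\neq0$
\begin{equation*}
\int_0^\infty|\hat\psi(\delta_t^*\xi)|^2\,\frac{dt}{t}=\int_0^\infty\eta(t\rho^*(\xi))^2\,\frac{dt}{t}=\int_0^\infty\eta(s)^2\,\frac{ds}{s}=1.
\end{equation*}
Writing $|\hat\psi(\delta_t^*\xi)|^2=\hat\psi(\delta_t^*\xi)\overline{\hat\psi(\delta_t^*\xi)}$, Plancherel's theorem together with Proposition \ref{prop0}(iv) and Fubini (legitimate because for $f,h\in\mathscr S_0$ the transforms $\hat f,\hat h$ are compactly supported away from the origin, so the double integral converges absolutely) yields the polarized identity
\begin{equation*}
\int_{\Bbb R^n}f(x)\overline{h(x)}\,dx=\int_0^\infty\int_{\Bbb R^n}(f*\psi_t)(x)\,\overline{(h*\psi_t)(x)}\,dx\,\frac{dt}{t}
\end{equation*}
for $f,h\in\mathscr S_0$.

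Applying the Cauchy--Schwarz inequality in $t$ and then H\"older's inequality with the weight split $w^{1/p}\cdot w^{-1/p}$ gives
\begin{equation*}
\left|\int_{\Bbb R^n}f\overline{h}\,dx\right|\le\int_{\Bbb R^n}g_\psi(f)\,g_\psi(h)\,dx\le\|g_\psi(f)\|_{p,w}\,\|g_\psi(h)\|_{p',w^{-p'/p}}.
\end{equation*}
To absorb the last factor, I would note that $w\in A_p$ implies $w^{-p'/p}=w^{1-p'}\in A_{p'}$, so the already-established upper bound (again the boundedness half of Theorem \ref{thm5}) applies with the dual exponent and weight, giving $\|g_\psi(h)\|_{p',w^{-p'/p}}\le C\|h\|_{p',w^{-p'/p}}$. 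Taking the supremum over $h\in\mathscr S_0$ with $\|h\|_{p',w^{-p'/p}}\le1$ and invoking the density of $\mathscr S_0$ in $L^{p'}(w^{-p'/p})$ (Proposition \ref{prop0}(i)), which identifies this space with the dual of $L^p(w)$, yields $\|f\|_{p,w}\le C\|g_\psi(f)\|_{p,w}$ for $f\in\mathscr S_0$, whence the full equivalence.

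The one point requiring care---and the reason for arguing by hand rather than quoting Theorem \ref{thm5} wholesale---is circularity: the reverse inequality in Theorem \ref{thm5} is proved through the invertibility machinery of Theorem \ref{thm6} and Proposition \ref{prop1}, which themselves rest on this lemma. Thus throughout the lower-bound argument I must invoke only the boundedness direction, and the exact reproducing identity is precisely what lets the duality argument deliver the reverse estimate from boundedness alone. The remaining verifications (Fubini, the H\"older splitting, and the $A_p$--$A_{p'}$ duality) are routine.
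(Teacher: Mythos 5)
Your proposal is correct and follows essentially the same route as the paper: upper bound from the boundedness result of \cite{Sacze}, the exact Calder\'on reproducing identity coming from $\int_0^\infty\eta(s)^2\,ds/s=1$, and then Schwarz--H\"older duality with the upper bound applied at the dual exponent and weight $w^{-p'/p}\in A_{p'}$. The only cosmetic difference is that the paper derives the polarized identity from the $L^2$ isometry $\|g_\psi(f)\|_2=\|f\|_2$ via real polarization (for $f,h\in\mathscr S$), whereas you obtain it directly on the Fourier side via Plancherel and Fubini for $f,h\in\mathscr S_0$; both are valid and the density of $\mathscr S_0$ in $L^{p'}(w^{-p'/p})$ makes your version of the supremum step legitimate.
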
 

\begin{lemma}\label{lem2} 
Suppose that $m\in L^\infty(\Bbb R^n)$, $m\in C^\infty(\Bbb R^n\setminus \{0\})$ and that $m$ is homogeneous of degree $0$ with respect to $\delta_t^*$.  Then 
$m\in M^p_w$ for all $p\in (1,\infty)$ and $w\in A_p$ and 
$$\|m\|_{M^p(w)}\leq C\sup_{1\leq \rho^*(\xi)\leq 2, |a|\leq [\gamma]+1} 
\left|(\partial_\xi)^a m(\xi)\right|  $$  
with a constant $C$ independent of $m$,  
where $(\partial_\xi)^a=
(\partial/\partial \xi_1)^{a_1}\dots (\partial/\partial \xi_n)^{a_n}$ 
with $a=(a_1, \dots ,a_n)$, $a_j\in \Bbb Z$, $a_j\geq 0$, $1\leq j\leq n$.  
\end{lemma}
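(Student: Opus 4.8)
The plan is to realize $T_m$ as a Calder\'on--Zygmund operator on the space of homogeneous type $(\Bbb R^n,\rho,dx)$ and then invoke the weighted Calder\'on--Zygmund theory in that setting. The $L^2$ bound is immediate from Plancherel: $\|T_m(f)\|_2=\|m\hat f\|_2\leq \|m\|_\infty\|f\|_2$, and since $m$ is homogeneous of degree $0$ with respect to $\delta^*_t$ we have $\|m\|_\infty=\sup_{1\leq \rho^*(\xi)\leq 2}|m(\xi)|$, which is dominated by the right-hand side of the asserted inequality (the term $a=0$). Next I would study the convolution kernel $K=\mathscr F^{-1}(m)$. Because $\mathscr F$ intertwines $\delta_t$ and $\delta^*_t$ (as in Proposition \ref{prop0}(iv)) and $m(\delta^*_t\xi)=m(\xi)$, the kernel is homogeneous of degree $-\gamma$ with respect to $\delta_t$, that is $K(\delta_t x)=t^{-\gamma}K(x)$, and coincides with a $C^\infty$ function on $\Bbb R^n\setminus\{0\}$ (up to a constant multiple of the Dirac mass at the origin). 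Recall that the polar formula \eqref{1.9polar} gives $|B(x,t)|=c\,t^\gamma$, so $\gamma$ is the homogeneous dimension of the space.

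The core of the argument is to convert the hypothesis on $\partial^a_\xi m$ over the annulus $1\leq\rho^*(\xi)\leq 2$ into kernel estimates. I would fix a $\rho^*$-adapted partition of unity $1=\sum_{k\in\Bbb Z}\Psi(\delta^*_{2^{-k}}\xi)$ with $\Psi$ supported in $\{1/2\leq\rho^*\leq 2\}$, write $m=\sum_k (m\Psi)(\delta^*_{2^{-k}}\cdot)$ using the degree-$0$ homogeneity, and reduce every piece to a fixed $\delta_t$-dilate of the single localized kernel $K_0=\mathscr F^{-1}(m\Psi)$, namely $\mathscr F^{-1}\big((m\Psi)(\delta^*_{2^{-k}}\cdot)\big)(x)=2^{k\gamma}K_0(\delta_{2^k}x)$. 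The function $m\Psi$ is smooth, compactly supported away from the origin, and its $\partial^a_\xi$-seminorms for $|a|\leq[\gamma]+1$ are controlled by the annulus-derivatives appearing in the statement. Integrating by parts $[\gamma]+1$ times yields decay of $K_0$ and of its first derivatives fast enough to beat the volume growth of order $\gamma$, so that after rescaling and summing over $k$ one obtains the size bound $|K(x)|\leq C\rho(x)^{-\gamma}$ together with the H\"ormander regularity condition
\[
\int_{\rho(x)\geq c\rho(y)}|K(x-y)-K(x)|\,dx\leq C,\qquad y\neq 0,
\]
with $C$ a multiple of $\sup_{1\leq\rho^*(\xi)\leq 2,\,|a|\leq[\gamma]+1}|\partial^a_\xi m(\xi)|$. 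The same reasoning applied to the multiplier of the transpose operator (again homogeneous of degree $0$ and smooth away from the origin) gives the companion condition in the $y$-variable.

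With the $L^2$ boundedness and the two H\"ormander conditions in hand, I would finish by the weighted Calder\'on--Zygmund theorem on the space of homogeneous type $(\Bbb R^n,\rho,dx)$, whose $A_p$ classes are precisely those defined via $\rho$-balls in Section \ref{s1}; this gives $m\in M^p_w$ for every $1<p<\infty$ and $w\in A_p$ with the stated quantitative bound, the constant being independent of $m$. The main obstacle I anticipate is the kernel step in the non-diagonal geometry: since $\delta^*_t=\exp((\log t)P^*)$ is generated by a general matrix $P$, the ordinary partial derivatives $\partial^a_\xi$ do not rescale cleanly under $\delta^*_t$, so the integration-by-parts estimates for $K_0$ and the bookkeeping showing that exactly $[\gamma]+1$ derivatives suffice to overcome the $\gamma$-dimensional volume growth (and to make the dyadic sum converge) must be carried out in terms of the anisotropic norm $\rho$ rather than $|\cdot|$, using properties (1)--(5) and (a)--(b) of $\rho$ together with \eqref{1.9polar}.
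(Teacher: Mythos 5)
Your argument takes a genuinely different route from the paper's. The paper never analyzes $T_m$ as a Calder\'on--Zygmund operator: it takes the function $\psi$ of Lemma \ref{lem1} with $\hat\psi(\xi)=\eta(\rho^*(\xi))$, so that $\|T_mf\|_{p,w}\leq C\|g_\psi(T_mf)\|_{p,w}$, notes the identity $g_\psi(T_mf)=g_{\psi_m}(f)$ where $\mathscr F(\psi_m)=\hat\psi\,m$, and then bounds $g_{\psi_m}$ by the quantitative Littlewood--Paley estimate \eqref{eq1} of Theorem \ref{thm5} (from \cite{Sacze}). The only kernel estimate required is the single pointwise size bound $|\psi_m(x)|\leq C(1+|x|)^{-[\gamma]-1}\sup_{1\leq\rho^*(\xi)\leq2,\,|a|\leq[\gamma]+1}|(\partial_\xi)^am(\xi)|$, obtained by integration by parts; no regularity of $\psi_m$ is needed because the hypotheses (1)--(3) of Theorem \ref{thm5} are pure size conditions. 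Your route instead re-proves an anisotropic weighted H\"ormander--Mikhlin theorem from scratch: it is self-contained relative to classical weighted Calder\'on--Zygmund theory on $(\Bbb R^n,\rho,dx)$, but it obliges you to carry out the full dyadic kernel analysis (size, smoothness, and the transpose kernel) in the anisotropic geometry, all of which the paper's sandwich argument bypasses.

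One step needs repair before your route closes. Boundedness on $L^p_w$ for every $w\in A_p$ does \emph{not} follow from $L^2$ boundedness together with the integral H\"ormander condition $\int_{\rho(x)\geq c\rho(y)}|K(x-y)-K(x)|\,dx\leq C$; that hypothesis yields unweighted $L^p$ bounds and weak $(1,1)$, but it is known to be insufficient for $A_p$-weighted estimates. The Coifman--Fefferman-type theorem you want requires the standard pointwise regularity, e.g.\ $|K(x-y)-K(x)|\leq C\left(\rho(y)/\rho(x)\right)^{\delta}\rho(x)^{-\gamma}$ for $\rho(x)\geq c\rho(y)$, or at least a Dini-type strengthening of the H\"ormander condition. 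This is not fatal for your plan: the dyadic pieces $2^{k\gamma}K_0(\delta_{2^k}x)$ satisfy derivative bounds coming from the same integration by parts, and summing them in the anisotropic norm (using property (5) of $\rho$ and the fact that $[\gamma]+1>\gamma$) produces the H\"older-type condition. But the sufficient condition you invoke must be upgraded accordingly, and the anisotropic bookkeeping has to be carried out for the differences of $K$, not only for its size.
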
 

\begin{proof}[Proof of Lemma $\ref{lem1}$]  
 By \cite[Theorem 1.1]{Sacze} we see that 
$\|g_\psi(f)\|_{p,w}\leq C\|f\|_{p,w}$ 
for all $p\in (1,\infty)$ and $w\in A_p$. To prove the reverse inequality 
 we note that 
$\|g_\psi(f)\|_2=\|f\|_2$. Thus the polarization implies that for real-valued 
$f, h \in \mathscr S$  
\begin{align*}
4\int_{\Bbb R^n}f(x)h(x)\, dx&=\int_{\Bbb R^n}(f(x)+h(x))^2\, dx
-\int_{\Bbb R^n}(f(x)-h(x))^2\, dx 
\\ 
&= \int_{\Bbb R^n}(g_\psi(f+h)(x))^2\, dx 
-\int_{\Bbb R^n}(g_\psi(f-h)(x))^2\, dx 
\\ 
&= 4\int_{\Bbb R^n}\int_0^\infty f*\psi_t(x)h*\psi_t(x)\, \frac{dt}{t}\, dx. 
\end{align*} 
Therefore,  by the inequalities of Schwarz and H\"older we have 
$$\left|\int_{\Bbb R^n}f(x)h(x)\, dx\right|
\leq \|g_\psi(f)\|_{p,w}\|g_\psi(h)\|_{p',w^{-p'/p}}
\leq C\|g_\psi(f)\|_{p,w}\|h\|_{p',w^{-p'/p}}.  $$ 
Taking the supremum in $h$ with $\|h\|_{p',w^{-p'/p}}\leq 1$, we have 
$\|f\|_{p,w}\leq C\|g_\psi(f)\|_{p,w}$, from which we can derive the desired 
estimates for complex valued functions. 
\end{proof} 

\begin{proof}[Proof of Lemma $\ref{lem2}$]   
Let $\psi$ be as in Lemma \ref{lem1} and define $\psi_m$ by 
$\mathscr F(\psi_m)(\xi)=\hat{\psi}(\xi)m(\xi)$.   Then 
$g_\psi(T_mf)=g_{\psi_m}(f)$. So, by Lemma \ref{lem1} for $w\in A_p$, 
$1<p<\infty$, we have 
\begin{equation}\label{lem2e1}
\|T_mf\|_{p,w}\leq C\|g_\psi(T_mf)\|_{p,w}=C\|g_{\psi_m}(f)\|_{p,w}
\end{equation} 
Since $\psi_m\in \mathscr S_0$,  $g_{\psi_m}$ is bounded on $L^p_w$.   
To specify the operator bounds, we apply the estimates \eqref{eq1}.  
It is 
sufficient to observe the following estimates: 
\begin{multline}\label{lem2e2}  
|\psi_m(x)|=\left|\int_{\Bbb R^n}\hat{\psi}(\xi)m(\xi) 
e^{2\pi i\langle x,\xi \rangle}\,d\xi\right| 
\\ 
\leq C(1+|x|)^{-[\gamma]-1}\sup_{1\leq \rho^*(\xi)\leq 2, |a|\leq [\gamma]+1} 
\left|(\partial_\xi)^a m(\xi)\right|, 
\end{multline}
which follows by integration by parts,  with the constant $C$  
independent of $m$. 
Combining \eqref{lem2e1}, \eqref{lem2e2} and the estimates \eqref{eq1}, 
we have the conclusion. 
\end{proof}

\begin{proof}[Proof of Proposition $\ref{prop1}$] 
As in \cite{H}, \cite{Sa4},  
we take a sequence of functions $\{\varphi_j\}_{j=1}^\infty$ on the orthogonal 
group $O(n)$ with the following properties (1) and (2): 
\begin{enumerate} 
\item[(1)]  functions $\varphi_j$ are infinitely differentiable, 
non-negative and  satisfy 
$\int_{O(n)} \varphi_j(A)\, dA= 1$, where $dA$ is the Haar measure on $O(n)$. 
\item[(2)] for any neighborhood $U$ of the identity of $O(n)$,  there 
exists a positive integer $N$ such that $\supp(\varphi_j)\subset U$ 
for $j\geq N$.    
\end{enumerate} 
For $\xi\in S^{n-1}$, let 
$$\widetilde{m}_j(\xi)= \int_{O(n)} m(A\xi)\varphi_j(A)\, dA.  $$  
Then $\widetilde{m}_j$ is $C^\infty$ on $S^{n-1}$ (see \cite[pp. 123--124]{H}). For 
$\xi \in \Bbb R^n\setminus \{0\}$, let 
$$m_j(\xi)=\widetilde{m}_j\left(\delta^*_{\rho^*(\xi)^{-1}}\xi\right).  $$ 
Then $m_j$ is homogeneous of degree $0$ with respect to $\delta_t^*$, $m_j\in 
C^\infty(\Bbb R^n\setminus \{0\})$ and $m_j=\widetilde{m}_j$ on $S^{n-1}$.   
\par 
We prove   
\begin{equation}\label{prop1e1} 
\rho_{r,v}(m_j)\leq \|m\|_\infty, \quad r\in (1,\infty), v\in A_r. 
\end{equation} 
For this it suffices to show that 
\begin{equation*} 
\|m_j^k\|_{M^r(v)}\leq C_j k^{[\gamma]+1}\|m\|_\infty^k,  
\end{equation*} 
where $C_j$ is independent of $k$. This follows by Lemma \ref{lem2}, since 
$$\sup_{1\leq \rho^*(\xi)\leq 2, |a|\leq [\gamma]+1} 
\left|(\partial_\xi)^a m_j(\xi)^k\right|\leq C_jk^{[\gamma]+1}\|m\|_\infty^k. 
$$ 
To see this,  it is helpful to refer to \cite[pp. 123--124]{H}. 
\par 
Since $m_j \to m$ as $j\to \infty$ uniformly on $S^{n-1}$, we can take 
$\ell=m_j$ for j large enough to get $\|m-\ell\|_\infty<\epsilon$. 
Let $p\in (1,\infty)$, $w\in A_p$. 
Confirming that a result analogous to \cite[Proposition 2.2]{Sa4} holds true 
in the setting of non-isotropic dilations,  
we can find $r>1$, $s>1$ and $\theta\in (0,1)$ such that $w^s\in A_r$ and 
$$ \|(m-m_j)^k\|_{M^p(w)}\leq \|(m-m_j)^k\|_{\infty}^{1-\theta}
\|(m-m_j)^k\|_{M^r(w^s)}^\theta. 
$$
Thus 
$$\rho_{p,w}(m-m_j)\leq \|m-m_j\|_\infty^{1-\theta}\rho_{r,w^s}(m-m_j)^\theta.  $$ 
Since 
$$\rho_{r,w^s}(m-m_j)\leq \rho_{r,w^s}(m)+\rho_{r,w^s}(m_j) $$ 
(see Riesz-Nagy \cite[p. 426]{RN}), it follows that   
\begin{align*}
\rho_{p,w}(m-m_j)&\leq \|m-m_j\|_\infty^{1-\theta}
(\rho_{r,w^s}(m)+\rho_{r,w^s}(m_j))^\theta 
\\ 
&\leq \|m-m_j\|_\infty^{1-\theta}
(\rho_{r,w^s}(m)+\|m\|_\infty)^\theta,  
\end{align*} 
where the last inequality follows from \eqref{prop1e1}. 
Since $\|m-m_j\|_\infty \to 0$ as $j\to \infty$, for a given $\epsilon>0$, 
taking $\ell =m_j$ with $j$ large enough, we have  
$\rho_{p,w}(m-\ell)<\epsilon$ and $\|m-\ell\|_\infty <\epsilon$. 
\end{proof}
\par 
\begin{proof}[Proof of Theorem $\ref{thm6}$]  
The proof is similar to that of \cite[Theorem 2.5]{Sa4}.  
Let   
$$\epsilon_0=\frac{1}{4}\min_{\xi\in S^{n-1}}|m(\xi)|. 
$$ 
Applying Proposition \ref{prop1}, we can find $\ell \in M^p_w$ which is  
homogeneous of degree $0$ with respect to $\delta^*_t$ 
and belongs to $C^\infty(\Bbb R^n\setminus\{0\})$ such that 
$\|m-\ell\|_{\infty}<\epsilon_0$ and $\rho_{p,w}(m-\ell)<\epsilon_0$.  
Let $C: \ell(\xi)+2\epsilon_0e^{i\theta}, 0\leq \theta\leq 
2\pi$, be a circle in $D$. Apply
Cauchy's formula to get   
\begin{equation}\label{pfthm6e1}
F(m(\xi))= \frac{1}{2\pi i}\int_C\frac{F(\zeta)}{\zeta-m(\xi)}\, 
d\zeta
=\frac{\epsilon_0}{\pi}\int_0^{2\pi}
\frac{F(\ell(\xi)+2\epsilon_0e^{i\theta})}{2\epsilon_0e^{i\theta}+
\ell(\xi)-m(\xi)}e^{i\theta}\, d\theta 
\end{equation}  
for $\xi\in \Bbb R^n\setminus\{0\}$.    We expand the integrand in the 
last integral into a power series by using 
\begin{equation}\label{pfthm6e2}
\frac{e^{i\theta}}{2\epsilon_0e^{i\theta}+\ell(\xi)-m(\xi)}
=\frac{1}{2\epsilon_0}\sum_{k=0}^\infty
\left(\frac{m(\xi)-\ell(\xi)}{2\epsilon_0e^{i\theta}}\right)^k,  
\end{equation}   
where the series converges uniformly in $\theta\in [0,2\pi]$ since 
$$\left|\frac{m(\xi)-\ell(\xi)}{2\epsilon_0e^{i\theta}}\right|\leq \frac{1}{2}. $$ 
Substituting \eqref{pfthm6e2} in  \eqref{pfthm6e1}, we have 
\begin{equation}\label{pfthm6e3}
F(m(\xi))=\frac{1}{2\pi}\sum_{k=0}^\infty 
\left(\frac{m(\xi)-\ell(\xi)}{2\epsilon_0}\right)^k N_k(\xi), 
\end{equation} 
 where 
\begin{equation*} 
N_k(\xi)
 =\int_0^{2\pi}F(\ell(\xi)+2\epsilon_0e^{i\theta})e^{-ik\theta}
 \, d\theta 
\end{equation*} 
and the series on the right hand side of \eqref{pfthm6e3} converges uniformly 
in $\xi\in \Bbb R^n\setminus\{0\}$, 
since 
$$   \left|\frac{m(\xi)-\ell(\xi)}{2\epsilon_0}\right|\leq \frac{1}{2}, 
\qquad 
\epsilon_0\leq |\ell(\xi)+2\epsilon_0e^{i\theta}|\leq \|m\|_\infty+ 
3\epsilon_0. 
$$ 
Also, 
$N_k(\xi)$ is homogeneous of degree $0$ with respect to $\delta^*_t$ 
and infinitely differentiable in $\Bbb R^n\setminus \{0\}$ and  
$$\sup_{1\leq \rho^*(\xi)\leq 2, |a|\leq [\gamma]+1} 
\left|(\partial_\xi)^a N_k(\xi)\right|\leq C
$$ 
with $C$ independent of $k$. Therefore, by Lemma \ref{lem2} we have 
$\|N_k\|_{M^p(w)}\leq C$ with a constant $C$ independent of $k$.   
Thus we see that 
$$ 
\sum_{k=0}^\infty 
(2\epsilon_0)^{-k}\|(m-\ell)^k\|_{M^p(w)}\|N_k\|_{M^p(w)} 
\leq C\sum_{k=0}^\infty (2\epsilon_0)^{-k}\|(m-\ell)^k\|_{M^p(w)}
$$
and that the last series converges since  
$\|(m-\ell)^k\|_{M^p(w)}\leq \epsilon_0^k$ if 
$k$ is sufficiently large.  
From this and \eqref{pfthm6e3} we can infer that $F(m)\in M^p_w$.   
This completes the proof.   
\end{proof}

\par 
 By Theorem \ref{thm6} in particular we have the following. 
\begin{corollary}\label{inverse}  Let  $1<p<\infty$ and $w\in A_p$. 
Suppose that $m$ is homogeneous of degree $0$ with respect to $\delta^*_t$ 
and that 
$m\in M^r_v$ for all 
$r\in (1,\infty)$ and all $v \in A_r$.  
We  further assume that $m$ is continuous on $S^{n-1}$ and does not 
vanish there. Then $m^{-1}\in M^p_w$.   
\end{corollary}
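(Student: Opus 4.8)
The plan is to obtain the corollary as the special case $F(z)=1/z$ of Theorem \ref{thm6}. The function $F(z)=1/z$ is holomorphic on $D=\Bbb C\setminus\{0\}$, so once we verify that $m$ satisfies all the hypotheses of Theorem \ref{thm6}, we conclude immediately that $m^{-1}=F(m)\in M^p_w$ for the given $p$ and $w$. Thus the whole task reduces to checking that the stated properties of $m$ on $S^{n-1}$ upgrade to the full hypotheses of Theorem \ref{thm6} on $\Bbb R^n\setminus\{0\}$.

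First I would record the polar decomposition for the adjoint dilations: every $\xi\neq 0$ can be written as $\xi=\delta^*_{\rho^*(\xi)}\theta$ with $\theta=\delta^*_{\rho^*(\xi)^{-1}}\xi$, where $\rho^*(\theta)=1$ forces $|\theta|=1$ by the analogue of property (3), so that $\theta\in S^{n-1}$. Since $m$ is homogeneous of degree $0$ with respect to $\delta^*_t$, we have $m(\xi)=m(\theta)$. Hence the non-vanishing of $m$ on $S^{n-1}$ propagates to $m(\xi)\neq 0$ for every $\xi\in\Bbb R^n\setminus\{0\}$, which is precisely the non-vanishing hypothesis required by Theorem \ref{thm6}.

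Next, continuity of $m$ on $S^{n-1}$ together with continuity of the map $\xi\mapsto\delta^*_{\rho^*(\xi)^{-1}}\xi$ (which follows from the continuity of $\rho^*$ and the smoothness of the dilation group) yields continuity of $m$ on $\Bbb R^n\setminus\{0\}$; boundedness of $m$ is then immediate from its continuity on the compact set $S^{n-1}$ combined with homogeneity of degree $0$. The remaining assumption, that $m\in M^r_v$ for all $r\in(1,\infty)$ and all $v\in A_r$, is granted outright in the statement.

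Having confirmed that $m$ is a bounded function, continuous on $\Bbb R^n\setminus\{0\}$, homogeneous of degree $0$ with respect to $\delta^*_t$, nonzero throughout $\Bbb R^n\setminus\{0\}$, and a Fourier multiplier for every $M^r_v$, I would apply Theorem \ref{thm6} with $F(z)=1/z$ to obtain $m^{-1}=F(m)\in M^p_w$. I do not expect a genuine obstacle here: the entire analytic content is carried by Theorem \ref{thm6}, and the only work is the elementary verification that non-vanishing, continuity, and boundedness transfer from the unit sphere to all of $\Bbb R^n\setminus\{0\}$ by homogeneity.
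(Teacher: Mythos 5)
Your proposal is correct and follows exactly the paper's route: the paper's proof is the single line ``Take $F(z)=1/z$ in Theorem \ref{thm6}.'' The extra verifications you supply (transferring non-vanishing, continuity, and boundedness from $S^{n-1}$ to $\Bbb R^n\setminus\{0\}$ via homogeneity) are sound and merely make explicit what the paper leaves to the reader.
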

\begin{proof} 
Take $F(z)=1/z$ in Theorem \ref{thm6}. 
\end{proof}

Applying Corollary \ref{inverse} in the theory of the Littlewood-Paley 
functions, we can prove the following. 

\begin{theorem}\label{thm7}  
Let $\psi \in L^1(\Bbb R^n)$ satisfy \eqref{cancell}.  
Suppose that  $\|g_\psi(f)\|_{r,v}\leq C_{r,v}\|f\|_{r,v}$, 
$f\in \mathscr S$, for all 
$r\in (1,\infty)$ and all $v\in A_r$  and that 
$m(\xi)=\int_0^\infty|\hat{\psi}(\delta^*_t\xi)|^2\, dt/t$ is continuous and 
strictly positive on $S^{n-1}$. Let $f\in \mathscr S$. 
Then we have
$$\|f\|_{p,w}\leq C_{p,w} \|g_\psi(f)\|_{p,w}  $$  
 for all $p\in (1,\infty)$ and all $w\in A_p$. 
\end{theorem}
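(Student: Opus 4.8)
The plan is to recognize the weight $m$ as a Fourier multiplier symbol, to dominate the corresponding operator by $g_\psi$, and finally to invert $m$ by means of Corollary \ref{inverse}. First I would observe that $m(\xi)=\int_0^\infty|\hat\psi(\delta_t^*\xi)|^2\,dt/t$ is homogeneous of degree $0$ with respect to $\delta_t^*$: the substitution $u=ts$ in the defining integral gives $m(\delta_s^*\xi)=m(\xi)$ for every $s>0$. With the hypothesis that $m$ is continuous and strictly positive on $S^{n-1}$, it follows that $m\in L^\infty(\Bbb R^n)$ and that $m$ is continuous and nonvanishing on $S^{n-1}$. Let $T_m$ be the multiplier operator \eqref{fmo} attached to this bounded symbol.

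The heart of the argument is a polarized Calder\'on-type identity. For $f,h\in\mathscr S$, Plancherel's theorem gives $\int_{\Bbb R^n}T_m(f)\overline{h}\,dx=\int_{\Bbb R^n}m(\xi)\hat f(\xi)\overline{\hat h(\xi)}\,d\xi$; since $\int_0^\infty|\hat\psi(\delta_t^*\xi)|^2\,dt/t=m(\xi)\le\|m\|_\infty$ and $\hat f,\hat h$ are Schwartz functions, Fubini's theorem is legitimate, and using $\mathscr F(f*\psi_t)(\xi)=\hat f(\xi)\hat\psi(\delta_t^*\xi)$ from Proposition \ref{prop0}(iv) together with Plancherel in the inner variable I would rewrite the pairing as
\begin{equation*}
\int_{\Bbb R^n}T_m(f)(x)\overline{h(x)}\,dx
=\int_0^\infty\int_{\Bbb R^n}(f*\psi_t)(x)\,\overline{(h*\psi_t)(x)}\,dx\,\frac{dt}{t}.
\end{equation*}
Applying the Schwarz inequality in $t$ for each fixed $x$ bounds the modulus of the right-hand side by $\int_{\Bbb R^n}g_\psi(f)(x)\,g_\psi(h)(x)\,dx$, and then H\"older's inequality with the weight $w$ gives, for every $r\in(1,\infty)$ and $v\in A_r$,
\begin{equation*}
\left|\int_{\Bbb R^n}T_m(f)(x)\overline{h(x)}\,dx\right|
\le\|g_\psi(f)\|_{r,v}\,\|g_\psi(h)\|_{r',v^{-r'/r}}.
\end{equation*}

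Since $v^{-r'/r}\in A_{r'}$, the assumed boundedness of $g_\psi$ yields $\|g_\psi(h)\|_{r',v^{-r'/r}}\le C\|h\|_{r',v^{-r'/r}}$, and taking the supremum over $h$ in the dense subspace $\mathscr S$ of $L^{r'}(v^{-r'/r})$ (the dual of $L^r_v$) produces $\|T_m(f)\|_{r,v}\le C\|g_\psi(f)\|_{r,v}\le C\,C_{r,v}\|f\|_{r,v}$. Hence $m\in M^r_v$ for all $r\in(1,\infty)$ and all $v\in A_r$. Corollary \ref{inverse} now applies, because $m$ is homogeneous of degree $0$ with respect to $\delta_t^*$, lies in every $M^r_v$, and is continuous and nonvanishing on $S^{n-1}$; it gives $m^{-1}\in M^p_w$. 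Because $m(\xi)^{-1}m(\xi)=1$ for $\xi\neq0$, we have $T_{m^{-1}}T_m f=f$, so that
\begin{equation*}
\|f\|_{p,w}=\|T_{m^{-1}}T_m f\|_{p,w}
\le\|m^{-1}\|_{M^p(w)}\,\|T_m(f)\|_{p,w}
\le C_{p,w}\|g_\psi(f)\|_{p,w},
\end{equation*}
where the last inequality is the case $(r,v)=(p,w)$ established above.

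I expect the principal obstacle to be the transfer of information from the square function to the multiplier, namely the polarized identity and the resulting estimate $\|T_m(f)\|_{r,v}\le C\|g_\psi(f)\|_{r,v}$, which is what certifies $m\in M^r_v$ and thereby feeds Corollary \ref{inverse}. The remaining ingredients --- the homogeneity of $m$, the Fubini justification, the density of $\mathscr S$ in the dual space, and the identity $T_{m^{-1}}T_m=I$ on $\mathscr S$ coming from $m^{-1}m=1$ almost everywhere --- are routine.
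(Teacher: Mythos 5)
Your proposal is correct and follows essentially the same route as the paper: the paper's Lemma \ref{lem3} establishes exactly your key estimate $\|T_m(f)\|_{r,v}\le C\|g_\psi(f)\|_{r,v}$ by the same polarization/duality argument (carried out with a truncated kernel $\Psi^{(\epsilon)}$ and a limit $\epsilon\to 0$ rather than your direct Fubini justification, which is also legitimate since $m\in L^\infty$), and the conclusion is then drawn from Corollary \ref{inverse} exactly as you do.
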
 
To prove Theorem \ref{thm7}, we also need the following lemma. 
\begin{lemma}\label{lem3}  
Suppose that $\|g_\psi(f)\|_{r,v}\leq C_{r,v}\|f\|_{r,v}$, 
$f\in \mathscr S$,  for all 
$r \in (1, \infty)$ and all $v\in A_r$. Then, if  
$m(\xi)$ is defined as in Theorem $\ref{thm7}$ 
and if $1<p<\infty$,  $w\in A_p$, we have  
$m\in M^p_w$.  
\end{lemma}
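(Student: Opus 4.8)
The plan is to prove $m\in M^p_w$ by duality, exploiting the fact that $m$ is built as a ``square'' out of $g_\psi$, so that the bilinear pairing $\int T_m(f)\overline h\,dx$ factors through two copies of the Littlewood--Paley operator.

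First I would record that $m\in L^\infty$, which is needed even to make sense of $T_m$. Taking $r=2$ and $v\equiv 1$ in the hypothesis gives $\|g_\psi(f)\|_2\le C\|f\|_2$; since Plancherel together with the definition of $m$ yields $\|g_\psi(f)\|_2^2=\int_{\Bbb R^n}m(\xi)|\hat f(\xi)|^2\,d\xi$, and $\hat f$ ranges over a dense subset of $L^2$, we obtain $m(\xi)\le C^2$ a.e. Hence $T_m$ is a well-defined bounded operator on $L^2$ and in particular acts on $\mathscr S$.

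The central computation is the following. For $f,h\in\mathscr S$, Plancherel turns the pairing into $\int_{\Bbb R^n}T_m(f)\overline h\,dx=\int_{\Bbb R^n}m(\xi)\hat f(\xi)\overline{\hat h(\xi)}\,d\xi$. Inserting the definition of $m$ and factoring $|\hat\psi(\delta_t^*\xi)|^2\hat f\,\overline{\hat h}=[\hat\psi(\delta_t^*\xi)\hat f]\,\overline{[\hat\psi(\delta_t^*\xi)\hat h]}$, then using Proposition~\ref{prop0}(iv) in the form $\mathscr F(f*\psi_t)(\xi)=\hat f(\xi)\hat\psi(\delta_t^*\xi)$, I would rewrite it, after Plancherel in $\xi$ and an interchange of integrals, as
$$\int_{\Bbb R^n}T_m(f)\overline h\,dx=\int_{\Bbb R^n}\int_0^\infty (f*\psi_t)(x)\,\overline{(h*\psi_t)(x)}\,\frac{dt}{t}\,dx.$$
Applying the Cauchy--Schwarz inequality in $dt/t$ pointwise in $x$ bounds the inner integral by $g_\psi(f)(x)\,g_\psi(h)(x)$, and then H\"older's inequality with conjugate exponents $p,p'$ and the dual weights $w,w^{-p'/p}$ gives
$$\left|\int_{\Bbb R^n}T_m(f)\overline h\,dx\right|\le\int_{\Bbb R^n}g_\psi(f)\,g_\psi(h)\,dx\le\|g_\psi(f)\|_{p,w}\,\|g_\psi(h)\|_{p',w^{-p'/p}}.$$
Finally I would invoke the hypothesis twice: since $w\in A_p$ implies $w^{-p'/p}\in A_{p'}$, the assumed boundedness of $g_\psi$ applied with $(r,v)=(p,w)$ and with $(r,v)=(p',w^{-p'/p})$ yields $\|g_\psi(f)\|_{p,w}\le C\|f\|_{p,w}$ and $\|g_\psi(h)\|_{p',w^{-p'/p}}\le C\|h\|_{p',w^{-p'/p}}$. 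Taking the supremum over $h\in\mathscr S$ with $\|h\|_{p',w^{-p'/p}}\le1$, and using that $\mathscr S$ is dense in $L^{p'}(w^{-p'/p})=(L^p_w)^*$ under the pairing $\int f\overline h\,dx$, I conclude $\|T_m(f)\|_{p,w}\le C\|f\|_{p,w}$, that is, $m\in M^p_w$.

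The argument is structurally short, and the only genuinely delicate point is the convergence of the $t$-integral (both near $0$, where the cancellation $\int\psi=0$ matters, and near $\infty$) needed to run Plancherel together with Fubini rigorously. I expect to dispose of this precisely through the Cauchy--Schwarz/H\"older bound just obtained, which shows a priori that $\int_{\Bbb R^n}\int_0^\infty|(f*\psi_t)(x)\,(h*\psi_t)(x)|\,dt/t\,dx\le\|g_\psi(f)\|_{p,w}\|g_\psi(h)\|_{p',w^{-p'/p}}<\infty$ for Schwartz data, so that the interchange of integration is legitimate; thus this potential obstacle is absorbed into the same estimate that drives the proof.
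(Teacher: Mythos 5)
Your proposal is correct and takes essentially the same route as the paper: both prove the bound by pairing $T_m f$ against a test function $h$, rewriting the pairing as $\int_{\Bbb R^n}\int_0^\infty (f*\psi_t)\,\overline{(h*\psi_t)}\,\frac{dt}{t}\,dx$, and then applying Cauchy--Schwarz in $t$, H\"older with the dual weight $w^{-p'/p}\in A_{p'}$, and the hypothesis on $g_\psi$ twice before taking the supremum over $h$. The only difference is technical bookkeeping: the paper truncates the $t$-integral to $[\epsilon,\epsilon^{-1}]$ (defining $m^{(\epsilon)}$ and letting $\epsilon\to 0$) to legitimize the interchange of integrals, whereas you justify it directly via the a priori absolute convergence supplied by the same Cauchy--Schwarz/H\"older estimate.
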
 

\begin{proof}  
For $\epsilon\in (0,1)$, let 
\begin{equation*} 
\Psi^{(\epsilon)}(x)=
\int_\epsilon^{\epsilon^{-1}}\int_{\Bbb R^n}  
\psi_t(x+y)\bar{\psi}_t(y)\,dy\,\frac{dt}{t},   
\end{equation*} 
where $\bar{\psi}_t$ denotes the complex conjugate.  
We note that 
\begin{equation*} 
\mathscr F(\Psi^{(\epsilon)})(\xi)
=\int_\epsilon^{\epsilon^{-1}}\hat{\psi}(\delta^*_tt\xi)\widehat{\bar{\psi}}
(-\delta^*_t\xi)\,\frac{dt}{t}
=\int_\epsilon^{\epsilon^{-1}}|\hat{\psi}(\delta^*_t\xi)|^2 \,\frac{dt}{t}
=: m^{(\epsilon)}(\xi). 
\end{equation*} 
Therefore $\Psi^{(\epsilon)}*f= T_{m^{(\epsilon)}}f$. We observe that 
$$ \Psi^{(\epsilon)}*f(x)=
\int_\epsilon^{\epsilon^{-1}}
\int_{\Bbb R^n} \psi_t*f(y)\bar{\psi}_t(y-x)\,dy\,\frac{dt}{t};  
$$ 
$$ 
\int_{\Bbb R^n}\Psi^{(\epsilon)}*f(x)h(x)\, dx=
\int_\epsilon^{\epsilon^{-1}}
\int_{\Bbb R^n} \psi_t*f(y)\bar{\psi}_t*h(y)\,dy\,\frac{dt}{t}
$$  
for $f, h \in \mathscr S$. Thus by the inequalities of 
Schwarz and H\"older we have 
\begin{align*} 
\left|\int_{\Bbb R^n}\Psi^{(\epsilon)}*f(x)h(x)\, dx\right| 
&\leq \int_{\Bbb R^n} g_\psi(f)(y)g_\psi(\bar{h})(y)\, dy 
\\ 
&\leq \|g_\psi(f)\|_{p,w}\|g_\psi(\bar{h})\|_{p', w^{-p'/p}} 
\\ 
&\leq C\|g_\psi(f)\|_{p,w}\|h\|_{p', w^{-p'/p}}.  
\end{align*}
Taking the supremum over functions $h$ with $\|h\|_{p', w^{-p'/p}}\leq 1$, we 
have 
\begin{equation*}
\|T_{m^{(\epsilon)}}f\|_{p,w}\leq C\|g_\psi(f)\|_{p,w}.  
\end{equation*}  
Letting $\epsilon\to 0$ and noting $ m^{(\epsilon)}\to m$, we have 
\begin{equation}\label{pflem3}
\|T_{m}f\|_{p,w}\leq C\|g_\psi(f)\|_{p,w}.  
\end{equation} 
Since $\|g_\psi(f)\|_{p,w}\leq C\|f\|_{p,w}$, we see that $m\in M^p_w$.  
\end{proof}  

\begin{proof}[Proof of Theorem $\ref{thm7}$] 
Let $m$ be as in Theorem \ref{thm7}. Then by Lemma \ref{lem3}, $m\in 
M^p_w$ for all $p\in (1, \infty)$ and $w\in A_p$.  So we can apply 
Corollary \ref{inverse} to $m$ to conclude that $m^{-1} \in M^p_w$ 
if $1<p<\infty$, $w\in A_p$ and hence using \eqref{pflem3}, we have 
$$\|f\|_{p,w}=\|T_{m^{-1}}T_m f\|_{p,w}\leq C\|T_m f\|_{p,w}\leq 
C\|g_\psi(f)\|_{p,w}$$ 
for $f\in \mathscr S$, which implies the conclusion. 
\end{proof}

\begin{proof}[Proof of Theorem $\ref{thm5}$]  
It remains to prove the reverse inequality of \eqref{eq1}.  
If $m(\xi)=\int_0^\infty|\hat{\psi}(\delta^*_t\xi)|^2\, dt/t$, then by 
the non-degeneracy 
\eqref{parabnondeg} we have $m(\xi)\neq 0$ for $\xi\neq 0$. Therefore,  by 
Theorem \ref{thm7} we have only to show that $m$ is continuous on $S^{n-1}$. 
In \cite{Sacze}, it has been shown that 
$$\int_{2^k}^{2^{k+1}}|\hat{\psi}(\delta^*_t\xi)|^2\, \frac{dt}{t} \leq 
C\min\left(\left|\delta^*_{2^k}\xi\right|^\epsilon, 
\left|\delta^*_{2^k}\xi\right|^{-\epsilon}\right)  $$  
for $\xi\in S^{n-1}$ and $k\in \Bbb Z$ with some $\epsilon>0$ (see 
 \cite[Lemmas $3.1$ and $3.3$]{Sacze}). 
By analogues for $\delta^*_t$ of (a), (b) for $\delta_t$ 
 in Section \ref{s1}, it follows that 
 $$\int_{2^k}^{2^{k+1}}|\hat{\psi}(\delta^*_t\xi)|^2\, \frac{dt}{t} \leq 
C\min\left(2^{k\epsilon}, 2^{-k\epsilon}\right).   $$  
This implies that 
$$\int_\epsilon^{\epsilon^{-1}}|\hat{\psi}(\delta^*_t\xi)|^2\, \frac{dt}{t} 
\to \int_0^\infty|\hat{\psi}(\delta^*_t\xi)|^2\, \frac{dt}{t} 
\quad \text{as $\epsilon\to 0$}
$$
uniformly in $\xi \in S^{n-1}$.  
We note that 
$\int_\epsilon^{\epsilon^{-1}}|\hat{\psi}(\delta^*_t\xi)|^2\, dt/t $ is 
continuous  on $S^{n-1}$ for each fixed $\epsilon>0$.  Thus the continuity of
 $m$ on $S^{n-1}$ follows by the uniform convergence.   
\end{proof}

\begin{remark} \label{rem2.10} 
Let $\psi^{(j)} \in L^1(\Bbb R^n)$ for $j= 1, 2, \dots, \ell$. 
Suppose that  $\psi^{(j)}$ satisfies \eqref{cancell} and 
$(1)$, $(2)$ and $(3)$ of Theorem $\ref{thm5}$ for every $j$, 
$1\leq j\leq \ell$.  
Let  
\begin{equation*} 
\Psi(x)=\left(\psi^{(1)}(x), \dots ,\psi^{(\ell)}(x)\right),    
\end{equation*}  
\begin{equation*} 
\Psi_t(x)=\left(\psi^{(1)}_t(x), \dots ,\psi^{(\ell)}_t(x)\right), \quad 
\mathscr{F}\left(\Psi_t\right)(\xi)=
\left(\mathscr{F}\left(\psi^{(1)}_t\right)(\xi), \dots ,
\mathscr{F}\left(\psi^{(\ell)}_t\right)(\xi)\right).   
\end{equation*}  
We further assume that 
\begin{equation}\label{ellparabnondeg} 
\sup_{t>0}\left|\mathscr{F}\left(\Psi_t\right)(\xi)\right| 
= \sup_{t>0} \left(\sum_{j=1}^\ell 
\left|\mathscr{F}\left(\psi^{(j)}\right)(\delta_t^*\xi)\right|^2\right)^{1/2} 
>0, \quad \forall \xi\in \Bbb R^n\setminus \{0\}. 
\end{equation} 
Let 
\begin{equation*} 
f*\Psi_t(x)=\left(f*\psi^{(1)}_t(x), \dots , f*\psi^{(\ell)}_t(x)\right)  
\end{equation*}  
and 
\begin{equation*} 
g_\Psi(f)(x)=\left(\int_0^\infty |f*\Psi_t(x)|^2\, \frac{dt}{t}\right)^{1/2}, 
\quad 
|f*\Psi_t(x)|=\left(\sum_{j=1}^\ell  \left|f*\psi^{(j)}_t(x)\right|^2
\right)^{1/2}.  
\end{equation*}  
Then by Theorem $\ref{thm5}$ we have 
$\|g_\Psi(f)\|_{p,w} \leq C\|f\|_{p,w}$.  
We can also prove the reverse inequality by adapting the arguments 
given above when $\ell=1$ for the present situation, 
 applying the non-degeneracy \eqref{ellparabnondeg}. 
Thus we have 
\begin{equation}\label{ellequi}
   \|g_\Psi(f)\|_{p,w}\simeq \|f\|_{p,w}.  
\end{equation}   
\end{remark}  

\begin{aexample}   
We give an example in the case of the Euclidean structures $(\rho(x)=|x|$, 
 $\delta_t(x)=tx)$ for which we can apply Remark $\ref{rem2.10}$ 
 to get the norm equivalence in \eqref{ellequi}.  
Let $P_t(x)$ be the Poisson kernel on the upper half space 
$\Bbb R^n \times (0, \infty)$ defined by 
$$P_t(x)=c_n \frac{t}{(|x|^2+t^2)^{(n+1)/2}}
=\int_{\Bbb R^n}e^{-2\pi t|\xi|}e^{2\pi i\langle x,\xi\rangle}\, d\xi. $$ 
Let $\psi^{(j)}(x) =(\partial/\partial x_j)P_1(x)$, $1\leq j\leq n$. Then  
$$ 
\mathscr{F}\left(\psi^{(j)}\right)(\xi)=2\pi i\xi_j e^{-2\pi |\xi|}.  
$$ 
We can see that all the requirements in Remark $\ref{rem2.10}$ 
for $\psi^{(j)}$, $1\leq j\leq n$, needed in the proof of \eqref{ellequi} 
are fulfilled; in particular,  \eqref{ellparabnondeg} follows from  
$$ \left|\mathscr{F}\left(\Psi_t\right)(\xi)\right| =
2\pi t|\xi| e^{-2\pi t|\xi|}.  $$
 Thus we have \eqref{ellequi} 
for $\Psi=((\partial/\partial x_1)P_1, \dots, (\partial/\partial x_n)P_1)$. 
\end{aexample}  

\section{Proofs of Theorems $\ref{thm3}$ and $\ref{thm4}$}\label{s3}  

We apply the following estimates in proving Theorem \ref{thm3}. 
\begin{lemma}\label{lem0}  
Let $F$ be a function in $C^\infty(\Bbb R^n\setminus\{0\})$ which is 
homogeneous of degree $d$ with respect to $\delta_t$.  
Then, for $\rho(x)\geq 1$ we have  
$$
\left|(\partial_x)^aF(x)\right|\leq C_a \rho(x)^{d-|a|} 
$$ 
for all multi-indices $a$ with a positive constant $C_a$ independent of $x$.  
\end{lemma}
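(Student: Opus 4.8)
The plan is to reduce everything to the compact ``unit sphere'' $\{x:\rho(x)=1\}$ by means of the $\delta_t$-homogeneity, and then to track the precise power of $t=\rho(x)$ produced by each differentiation. First I note that, by property (3) of $\rho$, one has $\{x:\rho(x)=1\}=S^{n-1}$, and that every $x\neq 0$ admits the polar decomposition $x=\delta_t\theta$ with $t=\rho(x)$ and $\theta=\delta_{t^{-1}}x\in S^{n-1}$; here $t\geq 1$ by hypothesis. Since $F\in C^\infty(\Bbb R^n\setminus\{0\})$ and $S^{n-1}$ is a compact subset of $\Bbb R^n\setminus\{0\}$, the quantities $M_b:=\sup_{\theta\in S^{n-1}}\left|(\partial_x)^bF(\theta)\right|$ are finite for every multi-index $b$; the whole point will be to express $(\partial_x)^aF(x)$ through the $M_b$ with $|b|=|a|$ together with an explicit power of $t$.

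To obtain such an expression I would differentiate the homogeneity identity $F(\delta_ty)=t^dF(y)$ in the variable $y$, keeping $t$ fixed. Writing $k=|a|$ and applying the chain rule $k$ times, the left-hand side becomes $\sum_{|b|=k}Q_{a,b}(t)\,(\partial_x)^bF(\delta_ty)$, where each coefficient $Q_{a,b}(t)$ is a sum of at most $C_{n,k}$ products of $k$ entries of the matrix $\delta_t$, while the right-hand side is $t^d(\partial_y)^aF(y)$. Setting $y=\theta$ this yields, for each fixed order $k$, the linear system
\begin{equation*}
\sum_{|b|=k}Q_{a,b}(t)\,(\partial_x)^bF(x)=t^d\,(\partial_x)^aF(\theta),\qquad x=\delta_t\theta .
\end{equation*}
The coefficient matrix $\mathbf Q(t)=[Q_{a,b}(t)]$ is exactly the action of $\delta_t$ on homogeneous polynomials of degree $k$ (equivalently, on symmetric $k$-tensors), so $t\mapsto \mathbf Q(t)$ is a representation of the dilation group; in particular $\mathbf Q(t)$ is invertible and $\mathbf Q(t)^{-1}$ is the same construction carried out for $\delta_t^{-1}$, whence each entry of $\mathbf Q(t)^{-1}$ is again a sum of at most $C_{n,k}$ products of $k$ entries of $\delta_t^{-1}$.

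Solving the system gives $(\partial_x)^aF(x)=t^d\sum_{|b|=k}[\mathbf Q(t)^{-1}]_{a,b}\,(\partial_x)^bF(\theta)$, and it remains only to estimate the coefficients for $t\geq 1$. Here I would invoke property (b): replacing $t$ by $t^{-1}$ in $|\delta_tx|\leq t|x|$ (valid for $0<t\leq 1$) gives $|\delta_t^{-1}x|\leq t^{-1}|x|$ for $t\geq 1$, that is, $\|\delta_t^{-1}\|\leq t^{-1}$ in operator norm. Since every matrix entry is dominated by the operator norm, each product of $k$ entries of $\delta_t^{-1}$ is at most $t^{-k}$, so $\left|[\mathbf Q(t)^{-1}]_{a,b}\right|\leq C_{n,k}t^{-k}$. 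Combining this with $\left|(\partial_x)^bF(\theta)\right|\leq M_b$ yields
\begin{equation*}
\left|(\partial_x)^aF(x)\right|\leq t^d\,C_{n,k}t^{-k}\sum_{|b|=k}M_b=C_a\,t^{d-k}=C_a\,\rho(x)^{d-|a|},
\end{equation*}
which is the assertion. The only delicate point is this last coefficient estimate: the clean exponent $t^{-|a|}$, free of any extraneous logarithmic factors that a crude eigenvalue bound for $\delta_t^{-1}=t^{-P}$ would introduce when $P$ fails to be diagonalizable, is exactly what property (b) supplies, and that is where that property is essential.
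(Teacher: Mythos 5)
Your argument is correct and is essentially the paper's own proof: both differentiate the homogeneity identity $F(\delta_t y)=t^dF(y)$ via the chain rule, reduce to the compact set $\{\rho=1\}$, and use property (b) to bound the entries of $\delta_s$ for $s\le 1$ by $Cs$, which yields the clean exponent $\rho(x)^{-|a|}$. The only (cosmetic) difference is that the paper substitutes $t=\rho(x)^{-1}$ directly so that no linear system need be inverted, whereas you take $t=\rho(x)$ and invert the induced matrix $\mathbf Q(t)$ using the group property $\mathbf Q(t)^{-1}=\mathbf Q(t^{-1})$ — the resulting identity is the same.
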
 
\begin{proof} 
We write $\delta_t=(\delta_{ij}(t))$, $1\leq i, j\leq n$. 
We have $t^d F(x)=F(\delta_tx)$. 
Differentiating both sides by using the chain rule 
on the right hand side, we have 
$$ 
t^d (\partial_x)^aF(x)= 
\left[\left(\prod_{j=1}^n\left(\sum_{i=1}^n\delta_{ij}(t) 
\partial/\partial x_i\right)^{a_j}\right)F\right](\delta_t x). 
$$  
Substituting $t=\rho(x)^{-1}$ in this equation, we have 
$$ 
|(\partial_x)^aF(x)|\leq C
\left(\sup_{|b|=|a|, \rho(x)=1}|(\partial_x)^bF(x)|\right)
\left(\sup_{1\leq i\leq n, 1\leq j\leq n}\delta_{ij}(\rho(x)^{-1})\right)^{|a|}
\rho(x)^{d}. 
$$ 
This implies what we need, since $|\delta_{ij}(t)|\leq Ct$ for $0<t\leq 1$ by 
(b) of Section \ref{s1}. 
\end{proof} 

\begin{proof}[Proof of Theorem $\ref{thm3}$]   
Let $0<\alpha<\gamma$ and $\mathscr L_\alpha= 
\mathscr F^{-1}(\rho^*(\xi)^{-\alpha})$. Then  $\mathscr L_\alpha$ is 
homogeneous of degree $\alpha-\gamma$ with respect to $\delta_t$ and 
belongs to $C^\infty(\Bbb R^n\setminus\{0\})$ (see \cite[pp. 162--165]{CT2}).  
Let $\psi^{(\alpha)}= \mathscr L_\alpha - \mathscr L_\alpha*\Phi$. Then 
$H_\alpha(f)=g_{\psi^{(\alpha)}}(f)$. 
\par 
We easily see that 
\begin{equation}\label{pfthm2e1} 
 |\psi^{(\alpha)}(x)|\leq C\rho(x)^{\alpha-\gamma} \quad \text{for $\rho(x)
\leq 2$.} 
\end{equation} 
Since 
\begin{equation*} 
\psi^{(\alpha)}(x)
=\int_{\Bbb R^n} (\mathscr L_\alpha(x)-\mathscr L_\alpha(x-y))\Phi(y)\, dy, 
\end{equation*} 
and
\begin{equation*} 
\left|(\partial_x)^a\mathscr L_\alpha(x)\right|\leq C_a\rho(x)^{\alpha-\gamma-
|a|}  \quad \text{for $\rho(x) \geq 2$} 
\end{equation*} 
for all multi-indices $a$ by Lemma \ref{lem0}, 
using Taylor's formula with \eqref{moment}  and noting that $\Phi$ is 
compactly supported, we see that 
\begin{equation}\label{pfthm2e2} 
 |\psi^{(\alpha)}(x)|\leq C\rho(x)^{\alpha-\gamma-[\alpha]-1} \quad \text{for $\rho(x) \geq 2$,} 
\end{equation} 
where $\alpha-\gamma-[\alpha]-1<-\gamma$. 
By \eqref{pfthm2e1}, \eqref{pfthm2e2} and \eqref{1.9polar} it follows that 
$\psi^{(\alpha)}\in L^1$ (see Remark \ref{re2.2}).  
Also, we have 
\begin{equation*} 
\left|\mathscr F(\psi^{(\alpha)})(\xi)\right|= 
\left|\rho^*(\xi)^{-\alpha}(1-\hat{\Phi}(\xi)) \right| 
\leq C\rho^*(\xi)^{-\alpha}|\xi|^{[\alpha]+1} 
\leq C\rho^*(\xi)^{-\alpha+[\alpha]+1} 
\end{equation*} 
for $\rho^*(\xi)\leq 1$ by the analogue for $\rho^*$ of (4) for $\rho$ of 
Section \ref{s1}.  So we have $\mathscr F(\psi^{(\alpha)})(0)=0$, 
i.e., $\int \psi^{(\alpha)}=0$, by which combined with \eqref{pfthm2e1}, 
\eqref{pfthm2e2} and (5) for $\rho$ of Section \ref{s1} 
 we see that the conditions (1), (2), (3) of Theorem \ref{thm5}  are satisfied 
for $\psi^{(\alpha)}$.  Further, it is easy to see that 
$$\sup_{t>0}\left|\mathscr F(\psi^{(\alpha)})(\delta^*_t\xi)\right|>0$$ 
 for $\xi\neq 0$.  Thus all the assumptions of Theorem \ref{thm5} are fulfilled  for $\psi^{(\alpha)}$ and the conclusion of Theorem \ref{thm3} follows by 
applying Theorem \ref{thm5} to $g_{\psi^{(\alpha)}}$.  
\end{proof}

\begin{remark} 
If $\psi^{(\alpha)}$ is as in \eqref{eee1.6}, in the case of 
the Euclidean norm and the ordinary dilation, to prove 
$\|f\|_{p,w}\leq C\|g_{\psi^{(\alpha)}}(f)\|_{p,w}$, 
$0<\alpha<2$, $1<p<\infty$, $w\in A_p$, we can also apply 
the polarization technique as in the proof of Lemma \ref{lem1} 
(see also \cite{Sa2}) 
instead of using Theorem \ref{thm5} with the non-degeneracy 
condition \eqref{parabnondeg}, which is applicable in a more general 
situation of Theorem \ref{thm3}.  
This is the case  because  $\mathscr F(\psi^{(\alpha)})$ is a radial function. 
\end{remark}

\par 
 To prove Theorem \ref{thm4} we prepare the following lemmas. 

\begin{lemma}\label{lem4} Let $1<p<\infty$, $w\in A_p$ and $f\in L^p_w$. 
For a positive integer $m$, let $f_{(m)}=f\chi_{E_m}$, where 
$$E_m=\{x\in \Bbb R^n: |x|\leq m,  |f(x)|\leq m \}. $$
Then $f_{(m)} \to f$ almost everywhere and in $L^p_w$ as $m\to \infty$. 
\end{lemma}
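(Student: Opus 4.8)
The plan is to derive both assertions from the pointwise behavior of the truncations together with a single application of the dominated convergence theorem, the dominating function being the fixed integrable function $|f|^pw$.

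First I would record two preliminary facts. Since $w\in A_p$, both $w$ and $w^{-1/(p-1)}$ are locally integrable, so $w>0$ almost everywhere; consequently the finiteness $\int_{\Bbb R^n}|f(x)|^pw(x)\,dx<\infty$ forces $f$ to be finite almost everywhere with respect to Lebesgue measure. Next, the sets $E_m=\{|x|\le m,\ |f(x)|\le m\}$ increase with $m$, and their union is $\{x:|f(x)|<\infty\}$, which by the previous remark differs from $\Bbb R^n$ only by a Lebesgue-null set.

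From this the almost-everywhere convergence is immediate: for almost every $x$ the value $f(x)$ is finite and $|x|<\infty$, hence $x\in E_m$ for all sufficiently large $m$, so that $f_{(m)}(x)=f(x)$ eventually and therefore $f_{(m)}(x)\to f(x)$. For the $L^p_w$ convergence I would write $f-f_{(m)}=f\chi_{\Bbb R^n\setminus E_m}$, whence $|f-f_{(m)}|^pw=|f|^pw\,\chi_{\Bbb R^n\setminus E_m}\to 0$ almost everywhere while being dominated by $|f|^pw\in L^1$. The dominated convergence theorem then gives $\int_{\Bbb R^n}|f(x)-f_{(m)}(x)|^pw(x)\,dx\to 0$, that is, $\|f-f_{(m)}\|_{p,w}\to 0$.

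The argument is entirely routine; the only point that deserves a little care is the first step, namely that the $A_p$ hypothesis guarantees $w>0$ almost everywhere and hence that the sets $E_m$ exhaust $\Bbb R^n$ up to a Lebesgue-null set. This is what makes the pointwise convergence valid almost everywhere (rather than merely on the set where $f$ is finite) and supplies the integrable majorant needed for the dominated convergence argument.
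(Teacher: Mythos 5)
Your proof is correct and follows the same route as the paper: the paper simply notes that the pointwise convergence is obvious and that the norm convergence follows from dominated convergence using $|f_{(m)}|\leq|f|$. Your extra observation that $w\in A_p$ forces $w>0$ a.e., hence $f$ finite a.e. so that the sets $E_m$ exhaust $\Bbb R^n$ up to a null set, is exactly the detail the paper leaves implicit in calling the pointwise convergence ``obvious.''
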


\begin{lemma}\label{lem5} Let $p$, $w$ and $f$ be as in Lemma $\ref{lem4}$. 
Let $\varphi$ be an infinitely differentiable, non-negative function on 
$\Bbb R^n$ such that 
$\varphi(\xi)=1$ for $\rho^*(\xi) \leq 1$, 
$\supp(\varphi)\subset \{\rho^*(\xi) \leq 2\}$ and 
$\varphi(\xi)=\varphi_0(\rho^*(\xi))$ for some $\varphi_0$ 
on $\Bbb R$. 
Define $\zeta^{(\epsilon)}\in \mathscr S_0$ by 
$$\zeta^{(\epsilon)}(\xi)=\varphi(\delta^*_\epsilon\xi)-
\varphi(\delta^*_{\epsilon^{-1}}\xi), \quad 
\epsilon\in (0,1/2). 
$$ 
We note that $\zeta^{(\epsilon)}(\xi)
=\zeta^{(\epsilon/2)}(\xi)\zeta^{(\epsilon)}(\xi)$.  
Let $f^{(\epsilon)}=f*\mathscr F^{-1}(\zeta^{(\epsilon)})$. Then 
$f^{(\epsilon)} \to f$ almost everywhere and in $L^p_w$ as $\epsilon \to 0$. 
\end{lemma}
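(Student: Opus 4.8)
The plan is to realize $f^{(\epsilon)}$ as a difference of two convolution operators and then argue by a uniform majorant together with convergence on the dense subspace $\mathscr S_0$. Set $K=\mathscr F^{-1}(\varphi)$; since $\varphi$ is smooth and compactly supported we have $K\in \mathscr S$, and $\int_{\Bbb R^n}K(x)\,dx=\hat K(0)=\varphi(0)=1$. Using Proposition \ref{prop0}(iv) I would identify $\mathscr F^{-1}(\varphi(\delta^*_a\,\cdot))=K_a$, where $K_a(x)=a^{-\gamma}K(\delta_a^{-1}x)$, so that
$$f^{(\epsilon)}=f*K_\epsilon-f*K_{\epsilon^{-1}}.$$
By Proposition \ref{prop0}(iii) both terms are dominated by $CM(f)$, whence $|f^{(\epsilon)}|\leq 2CM(f)$ for every $\epsilon\in(0,1/2)$; since $M$ is bounded on $L^p_w$ (Proposition \ref{prop0}(ii)), this provides a single majorant in $L^p_w$ and shows that the maps $f\mapsto f^{(\epsilon)}$ are uniformly bounded on $L^p_w$.

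For the $L^p_w$ convergence I would use a three-$\epsilon$ argument based on the density of $\mathscr S_0$ in $L^p_w$ (Proposition \ref{prop0}(i)). The key point is that on $\mathscr S_0$ the two halves behave transparently: if $g\in\mathscr S_0$, then $\hat g$ vanishes for $\rho^*(\xi)$ small, while $\mathscr F(g*K_{\epsilon^{-1}})(\xi)=\hat g(\xi)\varphi(\delta^*_{\epsilon^{-1}}\xi)$ is supported in $\{\rho^*(\xi)\leq 2\epsilon\}$, so $g*K_{\epsilon^{-1}}\equiv 0$ once $\epsilon$ is small; meanwhile $g*K_\epsilon\to g$ because $K_\epsilon$ is an approximate identity with $\int K=1$, the convergence being pointwise and, by the majorant $CM(g)\in L^p_w$ and dominated convergence, also in $L^p_w$. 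Hence $g^{(\epsilon)}\to g$ in $L^p_w$ for $g\in\mathscr S_0$. For general $f\in L^p_w$ I would pick $g\in\mathscr S_0$ with $\|f-g\|_{p,w}$ small and bound $\|f^{(\epsilon)}-f\|_{p,w}$ by $\|(f-g)^{(\epsilon)}\|_{p,w}+\|g^{(\epsilon)}-g\|_{p,w}+\|g-f\|_{p,w}$, the first and third controlled by the uniform bound and by density, the middle tending to $0$.

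For the almost everywhere statement I would run the standard maximal-function argument. Put $\Omega f(x)=\limsup_{\epsilon\to0}|f^{(\epsilon)}(x)-f(x)|$. For $g\in\mathscr S_0$ the computation above gives $g^{(\epsilon)}(x)\to g(x)$ for every $x$ (eventually $g^{(\epsilon)}=g*K_\epsilon$ and the approximate identity converges pointwise), so $\Omega g\equiv 0$. Writing $f=g+(f-g)$ and using $\sup_\epsilon|(f-g)^{(\epsilon)}|\leq 2CM(f-g)$ yields $\Omega f\leq 2CM(f-g)+|f-g|$. For each $\lambda>0$ the quantity $\int_{\{\Omega f>\lambda\}}w(x)\,dx$ is then bounded, via Chebyshev together with the $L^p_w$-boundedness of $M$, by $C\lambda^{-p}\|f-g\|_{p,w}^p$; letting $\|f-g\|_{p,w}\to0$ forces $\int_{\{\Omega f>\lambda\}}w\,dx=0$ for all $\lambda$, i.e. $\Omega f=0$ a.e.

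The routine ingredients (approximate-identity convergence, the three-$\epsilon$ and maximal arguments) are all standard; the only points using the specific structure are the identification $\mathscr F^{-1}(\varphi(\delta^*_a\,\cdot))=K_a$ and the majorization by $M(f)$, both immediate from Proposition \ref{prop0}. The cleanest simplification --- and what makes $\mathscr S_0$ the natural dense class rather than $\mathscr S$ --- is that the ``outer'' piece $f*K_{\epsilon^{-1}}$ vanishes identically on $\mathscr S_0$ for small $\epsilon$; had one used $\mathscr S$ instead, $g*K_{\epsilon^{-1}}$ would retain mass $\hat g(0)$ and need not tend to $0$. I therefore expect no serious obstacle, the only care being to keep the uniform majorant and the weighted estimate for $M$ explicit throughout.
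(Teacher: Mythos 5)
Your proof is correct and follows essentially the same route as the paper: the uniform majorant $|f^{(\epsilon)}|\leq CM(f)$ from Proposition \ref{prop0}(iii), convergence on a dense Schwartz-type class, and the $\limsup$/maximal-function argument combined with the $L^p_w$-boundedness of $M$. The only (cosmetic) differences are that the paper uses $\mathscr S$ rather than $\mathscr S_0$ as the dense class and obtains the norm convergence from the a.e.\ convergence by dominated convergence instead of a three-$\epsilon$ argument.
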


\begin{proof}[Proof of Lemma $\ref{lem4}$] 
The pointwise convergence is obvious and the norm convergence 
follows from the dominated convergence theorem of Lebesgue since 
$|f_{(m)}|\leq |f|$.  
\end{proof} 

\begin{proof}[Proof of Lemma $\ref{lem5}$] 
If $f\in \mathscr S$, we easily see that $f^{(\epsilon)}\to f$ pointwise 
as $\epsilon \to 0$.  Therefore, for $f\in L^p_w$, we have    
\begin{align*} 
 \left\|\limsup_{\epsilon\to 0}|f^{(\epsilon)}-f|\right\|_{p,w} 
&\leq \left\|\limsup_{\epsilon\to 0}|(f-h)^{(\epsilon)}-(f-h)
|\right\|_{p,w}
\\ 
&\leq C\|M(f-h)\|_{p,w}  
\\ 
&\leq C\|f-h\|_{p,w}  
\end{align*}
for any $h\in \mathscr S$. Since $\mathscr S$ is dense in $L^p_w$, it follows 
that $\limsup_{\epsilon\to 0}|f^{(\epsilon)}(x)-f(x)|=0$ a. e., which implies 
the pointwise convergence.  
The norm convergence follows from the pointwise convergence and the dominated 
convergence theorem of Lebesgue since $|f^{(\epsilon)}|\leq CM(f) \in L^p_w$. 
\end{proof}

\begin{proof}[Proof of Theorem $\ref{thm4}$] 
Define $f_{m,\epsilon}=(f_{(m)})^{(\epsilon)}$ for $f\in L^p_w$. 
Then $f_{m,\epsilon}\in 
\mathscr S_0$. By Theorem \ref{thm3}, we see that 
\begin{equation}\label{pfthm4e1} 
\|G_\alpha(f_{m,\epsilon})\|_{p,w}
=\|H_\alpha(\mathcal{I}_{-\alpha}f_{m,\epsilon})\|_{p,w} 
\simeq \|\mathcal{I}_{-\alpha}^{(\epsilon/2)}f_{m,\epsilon}\|_{p,w},  
\end{equation}
where $\mathcal{I}_{\beta}^{(\epsilon/2)}(f)  
=\mathscr F^{-1}(\zeta^{(\epsilon/2)}(\rho^*)^{-\beta})*f$, $\beta \in 
\Bbb R$,  for $f\in L^p_w$ 
and  we have used the equality $\mathcal{I}_{-\alpha}f_{m,\epsilon}= 
\mathcal{I}_{-\alpha}^{(\epsilon/2)}f_{m,\epsilon}$. 
Using Lemma \ref{lem4}, we see that $f_{m,\epsilon} \to f^{(\epsilon)}$ 
in $L^p_w$, since 
$$ 
\|f_{m,\epsilon}-f^{(\epsilon)}\|_{p,w}\leq C\|M(f_{(m)}-f)\|_{p,w} 
\leq C\|f_{(m)}-f\|_{p,w} 
$$
and also $f_{m,\epsilon} \to f^{(\epsilon)}$ pointwise, since 
\begin{multline*}
\left|f_{m,\epsilon}(x)- f^{(\epsilon)}(x)\right| 
= \left|\int (f_{(m)}(y)-f(y))\mathscr F^{-1}(\zeta^{(\epsilon)})(x-y)\, dy  
\right| 
\\ 
\leq \|f_{(m)}-f\|_{p,w} 
\left(\int \left|\mathscr F^{-1}(\zeta^{(\epsilon)})(x-y)\right|^{p'} 
w(y)^{-p'/p} \, dy \right).    
\end{multline*}  
 Thus $f_{m,\epsilon}-\Phi_t*f_{m,\epsilon}\to 
f^{(\epsilon)}-\Phi_t*f^{(\epsilon)}$ a.e. as $m\to \infty$ and 
by \eqref{pfthm4e1} 
we have, via Fatou's lemma, 
\begin{align*} \label{pfthm4e2}
\|G_\alpha(f^{(\epsilon)})\|_{p,w} &\leq \liminf_{m\to \infty} 
\|G_\alpha(f_{m, \epsilon})\|_{p,w} 
\\ 
&\leq C\liminf_{m\to \infty}\|\mathcal{I}_{-\alpha}^{(\epsilon/2)}
f_{m,\epsilon}\|_{p,w} 
=C\|\mathcal{I}_{-\alpha}^{(\epsilon/2)}f^{(\epsilon)}\|_{p,w},   \notag  
\end{align*} 
where the last equality follows since $\mathcal{I}_{-\alpha}^{(\epsilon/2)}$ 
is bounded on $L^p_w$.    Thus we see that $G_\alpha(f^{(\epsilon)})\in L^p_w$.  In fact, we also have the reverse inequality.  
To see this we first note that  
\begin{equation}\label{pfthm4e6}
\|G_\alpha(f^{(\epsilon)})- G_\alpha(f_{m,\epsilon})\|_{p,w} 
\leq \|G_\alpha(f^{(\epsilon)}-f_{m,\epsilon})\|_{p,w} 
= \|G_\alpha((f-f_{(m)})^{(\epsilon)})\|_{p,w} 
\end{equation} 
Since 
$$  
(f_{(k)}-f_{(m)})^{(\epsilon)}-\Phi_t*(f_{(k)}-f_{(m)})^{(\epsilon)} 
\to (f-f_{(m)})^{(\epsilon)}-\Phi_t*(f-f_{(m)})^{(\epsilon)} \quad 
\text{a.e. as $k\to \infty$}, 
$$
by Fatou's lemma we have  
\begin{equation}\label{pfthm4e7}
\|G_\alpha((f-f_{(m)})^{(\epsilon)})\|_{p,w} \leq \liminf_{k\to \infty}
\|G_\alpha((f_{(k)}-f_{(m)})^{(\epsilon)})\|_{p,w}.  
\end{equation} 
Since $(f_{(k)}-f_{(m)})^{(\epsilon)}\in \mathscr S_0$, by Theorem \ref{thm3} we have 
\begin{multline*} 
\|G_\alpha((f_{(k)}-f_{(m)})^{(\epsilon)})\|_{p,w}\simeq  
\|\mathcal{I}_{-\alpha}((f_{(k)}-f_{(m)})^{(\epsilon)})\|_{p,w} 
\\ 
= \|\mathcal{I}_{-\alpha}^{(\epsilon/2)}((f_{(k)}-f_{(m)})^{(\epsilon)})
\|_{p,w}.
\end{multline*}  
Since $f_{(m)} \to f$ in $L^p_w$, this implies that 
$$ 
\lim_{k, m\to \infty}\|G_\alpha((f_{(k)}-f_{(m)})^{(\epsilon)})\|_{p,w}=0. 
$$   
Thus by \eqref{pfthm4e6} and \eqref{pfthm4e7}, it follows that 
$G_\alpha(f_{m,\epsilon}) \to G_\alpha(f^{(\epsilon)})$ in $L^p_w$ 
as $m\to \infty$. 
Therefore,  letting $m\to \infty$ in \eqref{pfthm4e1}, we have   
\begin{equation}\label{pfthm4e8}
\|G_\alpha(f^{(\epsilon)})\|_{p,w}
\simeq \|\mathcal{I}_{-\alpha}^{(\epsilon/2)}f^{(\epsilon)}\|_{p,w}. 
\end{equation} 
\par 
Suppose that $f\in W^{\alpha, p}_w$ and let $g=\mathcal{I}_{-\alpha}(f)$. 
We show that 
\begin{equation}\label{pfthm4e3}
\mathcal{I}_{-\alpha}^{(\epsilon/2)}f^{(\epsilon)}=g^{(\epsilon)}
\end{equation} 
 as follows.  We have for $h\in \mathscr S_0$  
\begin{align*} 
\int g^{(\epsilon)}\mathcal{I}_{\alpha}(h) \, dx 
&=\lim_{m\to \infty}\int g_{m, \epsilon}\mathcal{I}_{\alpha}(h) \, dx 
=\lim_{m\to \infty}\int \mathcal{I}_{\alpha}^{(\epsilon/2)}(g_{m, \epsilon})h
 \, dx  
\\ 
&=\int \mathcal{I}_{\alpha}^{(\epsilon/2)}(g^{(\epsilon)})h \, dx.    
\end{align*} 
Also, 
\begin{multline*} 
\int g^{(\epsilon)}\mathcal{I}_{\alpha}(h) \, dx=
\lim_{m\to \infty}\int g_{m,\epsilon}\mathcal{I}_{\alpha}(h) \, dx 
\\ 
=\lim_{m\to \infty}\int g_{(m)}\mathcal{I}_{\alpha}(h^{(\epsilon)}) \, dx 
=\int g\mathcal{I}_{\alpha}(h^{(\epsilon)})\, dx.  
\end{multline*} 
By the definition of $g=\mathcal{I}_{-\alpha}(f)$, 
  $\int g\mathcal{I}_{\alpha}(h^{(\epsilon)}) \, dx= 
\int fh^{(\epsilon)}\, dx$.  
 Thus 
\begin{align*} 
\int g^{(\epsilon)}\mathcal{I}_{\alpha}(h) \, dx &=\int fh^{(\epsilon)}\, dx 
=\lim_{m\to \infty}\int f_{(m)}h^{(\epsilon)}\, dx
\\ 
&=\lim_{m\to \infty}\int f_{m,\epsilon}h\, dx
=\int f^{(\epsilon)}h\, dx. 
\end{align*} 
Therefore  
\begin{equation*} 
\int \mathcal{I}_{\alpha}^{(\epsilon/2)}(g^{(\epsilon)})h \, dx 
=\int f^{(\epsilon)}h\, dx  \quad \text{for all $h\in \mathscr S_0$,}
\end{equation*} 
which implies that $\mathcal{I}_{\alpha}^{(\epsilon/2)}(g^{(\epsilon)})=
f^{(\epsilon)}$. Since 
$\mathcal{I}_{\alpha}^{(\epsilon/2)}$ and 
$\mathcal{I}_{-\alpha}^{(\epsilon/2)}$ are bounded on $L^p_w$ and the 
mapping $f\to f^{(\epsilon)}$ is also bounded on $L^p_w$, by Lemma \ref{lem4} 
we see that 
\begin{align*} 
\mathcal{I}_{-\alpha}^{(\epsilon/2)}(f^{(\epsilon)})&=
\mathcal{I}_{-\alpha}^{(\epsilon/2)}
\mathcal{I}_{\alpha}^{(\epsilon/2)}(g^{(\epsilon)}) 
=\lim_{m\to\infty}\mathcal{I}_{-\alpha}^{(\epsilon/2)}
\mathcal{I}_{\alpha}^{(\epsilon/2)}(g_{m,\epsilon}) 
\\ 
&=\lim_{m\to\infty}g_{m,\epsilon} = g^{(\epsilon)}, 
\end{align*} 
which proves \eqref{pfthm4e3}.  
\par 
By \eqref{pfthm4e8} and \eqref{pfthm4e3}, we have 
$$ 
\|G_\alpha(f^{(\epsilon)})\|_{p,w}\leq C\|g^{(\epsilon)}\|_{p,w}
\leq C\|M(g)\|_{p,w}\leq C\|g\|_{p,w}. 
$$ 
Letting $\epsilon \to 0$ and applying  Lemma \ref{lem5} and 
Fatou's lemma, we have 
\begin{equation}\label{pfthm4e4}
 \|G_\alpha(f)\|_{p,w} \leq C\|\mathcal{I}_{-\alpha}(f)\|_{p, w}.  
\end{equation} 
\par 
Conversely, let us assume that $f\in L^p_w$ and $G_\alpha(f)\in L^p_w$.  
By Minkowski's inequality we see that 
\begin{equation}\label{pfthm4e0}
\|G_\alpha(f^{(\epsilon)})\|_{p,w}\leq C\|M(G_\alpha(f))\|_{p,w}
\leq C\|G_\alpha(f)\|_{p,w}. 
\end{equation}  
Applying  \eqref{pfthm4e8} and \eqref{pfthm4e0}, we see that 
$$ \sup_{\epsilon\in (0,1/2)}\|\mathcal{I}_{-\alpha}^{(\epsilon/2)}
f^{(\epsilon)}\|_{p,w}  
\leq C\sup_{\epsilon\in (0,1/2)} \|G_\alpha(f^{(\epsilon)})\|_{p,w} 
\leq C \|G_\alpha(f)\|_{p,w}. $$
Therefore, there exist a sequence $\{\epsilon_k\}$, $0<\epsilon_k<1/2$, 
 and 
a function $g\in L^p_w$ such that $\epsilon_k \to 0$ and 
$\mathcal{I}_{-\alpha}^{(\epsilon_k/2)}f^{(\epsilon_k)} \to g$ weakly in 
$L^p_w$ as $k\to \infty$ and 
\begin{equation}\label{pfthm4e9}   
\|g\|_{p,w} \leq C\|G_\alpha(f)\|_{p,w}.   
\end{equation}   
\par 
We show that $f=\mathcal{I}_{\alpha}g$. By Lemma \ref{lem5}, $f^{(\epsilon_k)} 
\to f$ in $L^p_w$. So, for $h\in \mathscr S_0$ we have 
\begin{align*}  
\int_{\Bbb R^n}fh\, dx&=\lim_{k\to\infty}\int_{\Bbb R^n}f^{(\epsilon_k)}h\, dx 
=\lim_{k\to\infty}\lim_{m\to\infty}\int_{\Bbb R^n}f_{m,\epsilon_k}h\, dx 
\\ 
&=\lim_{k\to\infty}\lim_{m\to\infty}\int_{\Bbb R^n}
\mathcal{I}_{-\alpha}(f_{m,\epsilon_k})\mathcal{I}_{\alpha}(h)\, dx 
\\
&=\lim_{k\to\infty}\lim_{m\to\infty}\int_{\Bbb R^n}
\mathcal{I}_{-\alpha}^{(\epsilon_k/2)}(f_{m,\epsilon_k})\mathcal{I}_{\alpha}(h)\, dx 
\\ 
&=\lim_{k\to\infty}\int_{\Bbb R^n}\mathcal{I}_{-\alpha}^{(\epsilon_k/2)}
(f^{(\epsilon_k)})\mathcal{I}_{\alpha}(h)\, dx 
=\int_{\Bbb R^n}g\mathcal{I}_{\alpha}(h)\, dx.  
\end{align*} 
This implies that $f=\mathcal{I}_{\alpha}g$ by definition. By \eqref{pfthm4e9} 
we have 
$$ \|\mathcal{I}_{-\alpha}f\|_{p,w}=\|g\|_{p,w} 
\leq C\|G_\alpha(f)\|_{p,w}, $$  
which combined with \eqref{pfthm4e4}, completes the proof of 
Theorem \ref{thm4}.  
\end{proof}  

\section{Characterization of the Sobolev spaces $W_w^{\alpha, p}$ 
by square functions defined with repeated uses of 
averaging operation}\label{s4} 

 Let $\Phi\in \mathcal M^1$. 
Define $\Lambda_t^j f(x)$, $j\geq 1$, by $\Lambda_t^j f(x)=f*\Phi^{(j)}_t(x)$, 
where 
\begin{equation*} 
 \Phi^{(1)}(x)=\Phi(x), \qquad 
\Phi^{(j)}(x)=\overbrace{\Phi*\dots *\Phi}^{j}(x), \quad j\geq 2. 
\end{equation*} 
We also write $\Lambda_t f(x)$ for $\Lambda_t^1 f(x)$.  
Let $I$ be the identity operator and $k$ a positive integer. 
We consider 
\begin{align}\label{eeee4.1} 
(I-\Lambda_t)^k f(x)&= f(x)+ \sum_{j=1}^k(-1)^j \binom{k}{j}\Lambda_t^j f(x) 
\\  
&=f(x)-K^{(k)}_t*f(x) =\int_{\Bbb R^n}(f(x)-f(x-y))K^{(k)}_t(y)\, dy,   \notag 
\end{align} 
for appropriate functions $f$, 
where 
\begin{equation}\label{eeee4.2} 
K^{(k)}(x)=-\sum_{j=1}^k(-1)^j \binom{k}{j}\Phi^{(j)}(x), 
\end{equation} 
and we have used the equation 
\begin{equation}\label{eeee4.3} 
\int_{\Bbb R^n} K^{(k)}(x)\, dx=-\sum_{j=1}^k(-1)^j \binom{k}{j}=1. 
\end{equation} 
Define  
 \begin{equation}\label{e4.1} 
 E_\alpha^{(k)} (f)(x)=\left(\int_0^\infty 
\left|(I-\Lambda_t)^k f(x)
\right|^2 \frac{dt}{t^{1+2\alpha}}\right)^{1/2},  \quad \alpha>0. 
 \end{equation} 
If $\Phi=\chi_0=|B(0,1)|^{-1}\chi_{B(0,1)}$ and $k=2$ in \eqref{e4.1}, 
we have  
\begin{equation*}\label{} 
 E_\alpha^{(2)} (f)(x)=\left(\int_0^\infty \left|f(x)-
 2\dashint_{B(x,t)} f(y)\, dy +\dashint_{B(x,t)}(f)_{B(y,t)}\, dy\right|^2 
\frac{dt}{t^{1+2\alpha}}\right)^{1/2}, 
\end{equation*} 
where $(f)_{B(y,t)}=\dashint_{B(y,t)} f$.  
 Also,  let  
 \begin{equation}\label{e4.2} 
 U_\alpha^{(k)} (f)(x)=
\left(\int_0^\infty \left|(I-\Lambda_t)^k\mathcal{I}_\alpha(f)(x)
\right|^2 \frac{dt}{t^{1+2\alpha}}\right)^{1/2}, 
 \end{equation} 
where $0<\alpha<\gamma$, $f\in \mathscr S_0$.  
Using \eqref{eeee4.1}, we can rewrite $E_\alpha^{(k)} (f)$ in \eqref{e4.1} and 
$U_\alpha^{(k)} (f)$ in \eqref{e4.2} as follows: 
\begin{equation}\label{eeeee4.6} 
 E_\alpha^{(k)} (f)(x)=\left(\int_0^\infty 
\left|f(x)-K^{(k)}_t*f(x)
\right|^2 \frac{dt}{t^{1+2\alpha}}\right)^{1/2},  
\end{equation}  
 \begin{equation}\label{eeeee4.7} 
 U_\alpha^{(k)} (f)(x)=
\left(\int_0^\infty \left|\mathcal{I}_\alpha(f)(x)-
K^{(k)}_t*\mathcal{I}_\alpha(f)(x)
\right|^2 \frac{dt}{t^{1+2\alpha}}\right)^{1/2},  
 \end{equation} 
where $K^{(k)}$ is as in \eqref{eeee4.2}. 
 \par 
As applications of Theorems \ref{thm3} and \ref{thm4} 
we have the following theorems.  
\begin{theorem}\label{thm4.1} 
 Let $0<\alpha<\min(2k, \gamma)$, $1<p<\infty$, $w\in A_p$ and let 
$U_\alpha^{(k)}$ be as in \eqref{e4.2}.   
  Then 
$$\|U_\alpha^{(k)} (f)\|_{p,w} \simeq \|f\|_{p,w}, \quad f\in 
\mathscr S_0(\Bbb R^n).     $$  
\end{theorem}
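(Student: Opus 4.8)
The plan is to identify $U_\alpha^{(k)}$ with the square function $H_\alpha$ of \eqref{cev} and then invoke Theorem \ref{thm3} directly. By the rewriting \eqref{eeeee4.7}, and since the dilation $g\mapsto g_t$ commutes with convolution so that $K^{(k)}_t=(K^{(k)})_t$ with $(K^{(k)})_t(x)=t^{-\gamma}K^{(k)}(\delta_t^{-1}x)$, the operator $U_\alpha^{(k)}$ is exactly $H_\alpha$ of \eqref{cev} with the kernel $\Phi$ replaced by $K^{(k)}$ from \eqref{eeee4.2}. Consequently it suffices to check that $K^{(k)}\in\mathcal M^\alpha$ for $0<\alpha<2k$ and then apply Theorem \ref{thm3}, whose hypotheses are precisely $0<\alpha<\gamma$ together with $\Phi\in\mathcal M^\alpha$.

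First I would note that $K^{(k)}$ is bounded with compact support: each $\Phi^{(j)}$ is a finite convolution power of the bounded, compactly supported $\Phi$, hence is bounded (and continuous for $j\ge 2$) with compact support, and $K^{(k)}$ is a finite linear combination of the $\Phi^{(j)}$. The normalization $\int_{\Bbb R^n}K^{(k)}(x)\,dx=1$ is precisely \eqref{eeee4.3}, which is condition (i) in the definition of $\mathcal M^\alpha$.

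The crux is the moment condition (ii), and this is where the bound $2k$ originates. Since $\mathscr F(\Phi^{(j)})=\hat\Phi^j$, formulas \eqref{eeee4.2} and \eqref{eeee4.3} give the identity
\[
\mathscr F(K^{(k)})(\xi)=1-\bigl(1-\hat\Phi(\xi)\bigr)^k .
\]
Because $\Phi\in\mathcal M^1$ makes the first-order moments vanish, $1-\hat\Phi(\xi)=O(|\xi|^2)$ as $\xi\to0$, whence $\mathscr F(K^{(k)})(\xi)=1+O(|\xi|^{2k})$. Reading off derivatives at the origin through $(\partial_\xi)^a\mathscr F(K^{(k)})(0)=(-2\pi i)^{|a|}\int_{\Bbb R^n}K^{(k)}(x)x^a\,dx$, I conclude that $\int_{\Bbb R^n}K^{(k)}(x)x^a\,dx=0$ for every multi-index $a$ with $1\le|a|\le 2k-1$. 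Hence $K^{(k)}\in\mathcal M^\beta$ for all $\beta<2k$; in particular $K^{(k)}\in\mathcal M^\alpha$ whenever $0<\alpha<2k$, since then $[\alpha]\le 2k-1$.

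Combining these facts, for $0<\alpha<\min(2k,\gamma)$ we have simultaneously $K^{(k)}\in\mathcal M^\alpha$ and $0<\alpha<\gamma$, so Theorem \ref{thm3} applied with $\Phi=K^{(k)}$ gives $\|U_\alpha^{(k)}(f)\|_{p,w}=\|H_\alpha(f)\|_{p,w}\simeq\|f\|_{p,w}$, which is the assertion. I expect the only genuinely delicate point to be the moment computation above, namely verifying the algebraic identity for $\mathscr F(K^{(k)})$ and the order of vanishing $O(|\xi|^{2k})$ furnished by the $\mathcal M^1$ hypothesis; once these are in hand the conclusion is a direct citation of Theorem \ref{thm3}. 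Alternatively, one could bypass Theorem \ref{thm3} and repeat its proof with $\psi^{(\alpha)}=\mathscr L_\alpha-\mathscr L_\alpha*K^{(k)}$, so that $U_\alpha^{(k)}=g_{\psi^{(\alpha)}}$, checking the hypotheses of Theorem \ref{thm5}; there the decay $|\psi^{(\alpha)}(x)|\le C\rho(x)^{\alpha-\gamma-2k}$ for $\rho(x)\ge2$ follows from Taylor's formula, the $2k-1$ vanishing moments of $K^{(k)}$ and Lemma \ref{lem0}, but invoking Theorem \ref{thm3} is the shorter route.
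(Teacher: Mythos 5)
Your argument is correct and coincides with the paper's own proof: the paper likewise rewrites $U_\alpha^{(k)}$ via \eqref{eeeee4.7} as $H_\alpha$ with kernel $K^{(k)}$, verifies $K^{(k)}\in\mathcal M^{2k-1}$ through the identity $1-\mathscr F(K^{(k)})(\xi)=(1-\hat\Phi(\xi))^k=O(|\xi|^{2k})$ and Taylor's formula at the origin, and then cites Theorem \ref{thm3}. No gaps; the moment computation you flag as the delicate point is exactly the step the paper carries out.
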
  
\par 
\begin{theorem}\label{thm4.2} 
 Let $1<p<\infty$, $w\in A_p$ and $0<\alpha<\min(2k, \gamma)$.  
Let $E_\alpha^{(k)}$ be as in \eqref{e4.1}.  Then 
$f\in W^{\alpha, p}_w$ if and only if $f\in L^p_w$ and 
$E_\alpha^{(k)}(f) \in L^p_w;$  also, we have  
$$\|\mathcal{I}_{-\alpha}(f)\|_{p,w}
\simeq \|E_\alpha^{(k)}(f)\|_{p,w}.  $$
\end{theorem}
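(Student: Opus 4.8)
The plan is to deduce Theorem \ref{thm4.2} from Theorem \ref{thm4.1} by repeating, essentially verbatim, the argument that derives Theorem \ref{thm4} from Theorem \ref{thm3}. The operators $E_\alpha^{(k)}$ and $U_\alpha^{(k)}$ occupy the same relative position as $G_\alpha$ and $H_\alpha$: comparing \eqref{eeeee4.6} with \eqref{eeeee4.7} one reads off at once that
\[
U_\alpha^{(k)}(g)=E_\alpha^{(k)}(\mathcal{I}_\alpha(g)),\qquad g\in\mathscr S_0,
\]
because $\mathcal{I}_\alpha(g)-K^{(k)}_t*\mathcal{I}_\alpha(g)$ is precisely the integrand defining $E_\alpha^{(k)}$ evaluated at $\mathcal{I}_\alpha(g)$. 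This is the exact analogue of the relation $H_\alpha(g)=G_\alpha(\mathcal{I}_\alpha(g))$ on which the proof of Theorem \ref{thm4} rests, and it is all that is needed to carry the weighted norm equivalence of Theorem \ref{thm4.1} over to the Sobolev side.

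First I would record the structural facts about $K^{(k)}$ that legitimize the substitution. By \eqref{eeee4.2} the kernel $K^{(k)}$ is a finite linear combination of the iterated convolutions $\Phi^{(j)}$, each of which is bounded and compactly supported whenever $\Phi$ is; hence $K^{(k)}$ is itself bounded with compact support, and by \eqref{eeee4.3} it satisfies $\int_{\Bbb R^n}K^{(k)}=1$. These are exactly the qualitative properties of the averaging kernel used throughout the proof of Theorem \ref{thm4}, so every step there applies to $K^{(k)}$ with no change. The admissible range $0<\alpha<\min(2k,\gamma)$ is not an additional constraint to be checked here: it is already contained in Theorem \ref{thm4.1}, whose proof exploits that $1-\mathscr F(K^{(k)})=(1-\hat\Phi)^k$ vanishes to order $2k$ at the origin by virtue of $\Phi\in\mathcal M^1$.

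With these in place I would run the proof of Theorem \ref{thm4} under the dictionary $G_\alpha\mapsto E_\alpha^{(k)}$, $H_\alpha\mapsto U_\alpha^{(k)}$, $\Phi_t\mapsto K^{(k)}_t$ and Theorem \ref{thm3}$\mapsto$Theorem \ref{thm4.1}. For $f\in L^p_w$ I would form the truncations $f_{m,\epsilon}=(f_{(m)})^{(\epsilon)}\in\mathscr S_0$ of Lemmas \ref{lem4} and \ref{lem5}; applying Theorem \ref{thm4.1} together with the displayed identity gives $\|E_\alpha^{(k)}(f_{m,\epsilon})\|_{p,w}\simeq\|\mathcal{I}_{-\alpha}^{(\epsilon/2)}f_{m,\epsilon}\|_{p,w}$, after which I pass to the limit in $m$ and then in $\epsilon$. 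For the implication $f\in W^{\alpha,p}_w\Rightarrow E_\alpha^{(k)}(f)\in L^p_w$ one uses Fatou's lemma, the $L^p_w$-boundedness of $\mathcal{I}_{\pm\alpha}^{(\epsilon/2)}$ and of $f\mapsto f^{(\epsilon)}$, and the identity $\mathcal{I}_{-\alpha}^{(\epsilon/2)}f^{(\epsilon)}=g^{(\epsilon)}$ with $g=\mathcal{I}_{-\alpha}(f)$, exactly as in the derivation of \eqref{pfthm4e4}. For the converse one extracts from the uniform-in-$\epsilon$ bound on $\|\mathcal{I}_{-\alpha}^{(\epsilon/2)}f^{(\epsilon)}\|_{p,w}$ a weak-$L^p_w$ limit $g$ along some $\epsilon_k\to0$, verifies $f=\mathcal{I}_\alpha(g)$ by testing against $h\in\mathscr S_0$, and concludes $\|\mathcal{I}_{-\alpha}(f)\|_{p,w}=\|g\|_{p,w}\le C\|E_\alpha^{(k)}(f)\|_{p,w}$.

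I do not anticipate a genuine obstacle once Theorem \ref{thm4.1} is granted, since the substantive analysis — the weighted Littlewood-Paley estimates and the extension of the $\alpha$-range through the $2k$-fold vanishing of $1-\mathscr F(K^{(k)})$ — is entirely absorbed into Theorem \ref{thm4.1}. The only points requiring care are the measure-theoretic ones already present in the proof of Theorem \ref{thm4}: the interchanges of limits justified by Fatou's lemma and dominated convergence, and the weak compactness argument producing $g$. The Minkowski-type estimate $E_\alpha^{(k)}(f^{(\epsilon)})(x)\le C\,M(E_\alpha^{(k)}(f))(x)$, needed to control $E_\alpha^{(k)}(f^{(\epsilon)})$, follows as before from the commutation $(I-\Lambda_t)^k(f*\Theta)=((I-\Lambda_t)^k f)*\Theta$ for the fixed kernel $\Theta=\mathscr F^{-1}(\zeta^{(\epsilon)})$, so no new ingredient beyond Theorem \ref{thm4.1} enters.
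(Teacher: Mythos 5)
Your proposal is correct and is essentially the paper's own argument: the paper likewise observes that $E_\alpha^{(k)}$ and $U_\alpha^{(k)}$ are exactly the square functions $G_\alpha$ and $H_\alpha$ built from the kernel $K^{(k)}$, checks that $K^{(k)}$ is bounded, compactly supported, has integral $1$ and lies in $\mathcal M^{2k-1}$ via $1-\mathscr F(K^{(k)})=(1-\hat{\Phi})^k=O(|\xi|^{2k})$, and then deduces Theorem \ref{thm4.2} for $0<\alpha<\min(2k,\gamma)$. The only cosmetic difference is that the paper invokes Theorem \ref{thm4} as a black box with $\Phi$ replaced by $K^{(k)}$, whereas you rerun the truncation/limiting argument of its proof starting from Theorem \ref{thm4.1}; the mathematical content is the same.
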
  

\begin{proof}[Proofs of Theorems $\ref{thm4.1}$ and $\ref{thm4.2}$] 
Using the expressions of $E_\alpha^{(k)} (f)$ and $U_\alpha^{(k)} (f)$ 
in \eqref{eeeee4.6} and \eqref{eeeee4.7}, we can derive 
Theorems $\ref{thm4.1}$ and $\ref{thm4.2}$ from 
Theorems $\ref{thm3}$ and $\ref{thm4}$, respectively, if 
$K^{(k)}\in \mathcal M^{2k-1}$, since then $K^{(k)}\in \mathcal M^{\alpha}$ 
for $\alpha\in (0, \min(2k, \gamma))$.  
\par 
To show that $K^{(k)}\in \mathcal M^{2k-1}$, first we easily see that 
$K^{(k)}$ is bounded and compactly supported. 
Since we have already noted \eqref{eeee4.3}, it remains to show that 
\begin{equation}\label{eeee4.6}
\int_{\Bbb R^n} y^a K^{(k)}(y)\, dy=0 \quad \text{if $1\leq |a|< 2k$.} 
\end{equation} 
 This can be shown 
as follows.   
Since $\Phi\in \mathcal M^1$, 
 we have  $\int y^a \Phi(y)\, dy=0$ for $|a|=1$, which 
implies that $\partial_\xi^a\hat{\Phi}(0)=0$ for $|a|=1$.  
Therefore we have, near $\xi=0$,  
\begin{align}\label{eeee4.7}  
1-\mathscr{F}(K^{(k)})(\xi)=1+\sum_{j=1}^k(-1)^j \binom{k}{j}\hat{\Phi}(\xi)^j
=\left(1-\hat{\Phi}(\xi)\right)^k =O(|\xi|^{2k}).  
\end{align}
Also, by Taylor's formula we see that 
\begin{equation}\label{eeee4.8} 
\mathscr{F}(K^{(k)})(\xi)=
1+\sum_{1\leq |a|<2k} C_a\xi^a \partial_\xi^a\mathscr{F}(K^{(k)})(0)
+ O(|\xi|^{2k}).  
\end{equation} 
From \eqref{eeee4.7} and \eqref{eeee4.8} it follows that 
\begin{equation*} 
\sum_{1\leq |a|<2k} C_a\xi^a \partial_\xi^a\mathscr{F}(K^{(k)})(0)
= O(|\xi|^{2k}).  
\end{equation*}
This implies that
 $\partial_\xi^a\mathscr{F}(K^{(k)})(0)=0$ for $1\leq |a|<2k$, 
and hence we have \eqref{eeee4.6}.  
\end{proof}

\begin{remark}\label{sec4re}  
In the definitions 
of $E^{(k)}_\alpha$ and $U^{(k)}_\alpha$ in \eqref{e4.1} and \eqref{e4.2}, 
if we assume only that $\Phi$ belongs to $\mathcal M^0$, 
then we have analogues of Theorems \ref{thm4.1} and \ref{thm4.2} for 
the range  $(0, \min(k, \gamma))$ of $\alpha$.  

\end{remark} 

\section{The Sobolev spaces $W_w^{\alpha, p}$ and distributional derivatives}
\label{s5} 

In $\Bbb R^2$, we consider $P=\mathop{\mathrm{diag}}(1,2)$, $\delta_t=
 \mathop{\mathrm{diag}}(t,t^2)$.  Then, $\gamma=3$ and  
 $$\rho(x_1,x_2)=\frac{1}{\sqrt{2}}\sqrt{x_1^2+ \sqrt{x_1^4+4x_2^2}},   $$ 
 $\rho^*=\rho$, $\delta_t^*=\delta_t$.  
Under this setting, let $W_w^{\alpha, p}$ be the weighted Sobolev space on 
$\Bbb R^2$ defined in Section \ref{s1} with $0<\alpha<3$, $1<p<\infty$, 
$w\in A_p$.  Then $W_w^{2, p}$ can be characterized by using distributional 
derivatives as follows.

\begin{theorem}\label{thm5.1}  
 Let $f\in L^p_w$ with $1<p<\infty$, $w \in A_p$. 
Let $(\partial/\partial x_1)^2f$, $\partial/\partial x_2 f$ be the 
distributional derivatives in $\mathscr S'$ $($the space of tempered 
distributions$)$. Then, 
$f\in W_w^{2, p}$ if and only if $(\partial/\partial x_1)^2f \in L^p_w$ and  
 $\partial/\partial x_2 f \in L^p_w$; further 
$$ \|\mathcal{I}_{-\alpha}(f)\|_{2,w} \simeq 
\|(\partial/\partial x_1)^2 f\|_{p,w} + \|\partial/\partial x_2 f\|_{p,w}. 
$$ 
\end{theorem}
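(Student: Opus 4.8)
The plan is to reduce the statement, in the spirit of \cite[Theorem 3 of Chap.~V]{St}, to the $L^p_w$-boundedness of a fixed finite collection of Fourier multipliers homogeneous of degree $0$ with respect to $\delta^*_t=\delta_t=\mathop{\mathrm{diag}}(t,t^2)$, with the potential operator $\mathcal{I}_{-2}$ (symbol $\rho^*(\xi)^2=\rho(\xi)^2=\frac{1}{2}(\xi_1^2+\sqrt{\xi_1^4+4\xi_2^2})$) playing the role of the Laplacian. The distributional derivatives in the statement carry the symbols $(2\pi i\xi_1)^2=-4\pi^2\xi_1^2$ and $2\pi i\xi_2$, and the key structural fact is that each of $\xi_1^2$, $\xi_2$ and $\rho(\xi)^2$ is homogeneous of degree $2$ with respect to $\delta^*_t$ (since $(\delta^*_t\xi)_1=t\xi_1$ and $(\delta^*_t\xi)_2=t^2\xi_2$), while $\xi_1^4+\xi_2^2$ is homogeneous of degree $4$. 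Accordingly I would introduce
$$m_1(\xi)=\frac{\xi_1^2}{\rho(\xi)^2},\quad m_2(\xi)=\frac{\xi_2}{\rho(\xi)^2},\quad n_1(\xi)=\frac{\rho(\xi)^2\xi_1^2}{\xi_1^4+\xi_2^2},\quad n_2(\xi)=\frac{\rho(\xi)^2\xi_2}{\xi_1^4+\xi_2^2},$$
all homogeneous of degree $0$. First I would check that they belong to $C^\infty(\Bbb R^2\setminus\{0\})$ and are bounded: $\rho(\xi)^2$ is smooth and strictly positive off the origin, and $\xi_1^4+\xi_2^2$ is likewise strictly positive off the origin, so no denominator vanishes; a continuous degree-$0$ homogeneous function is then bounded because it is determined by its values on the compact sphere $\{\rho^*(\xi)=1\}$. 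By Lemma \ref{lem2}, each of $m_1,m_2,n_1,n_2$ lies in $M^p_w$ for all $p\in(1,\infty)$ and $w\in A_p$.

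The second step records the symbol identities $-4\pi^2\xi_1^2=-4\pi^2m_1(\xi)\rho(\xi)^2$, $2\pi i\xi_2=2\pi i\,m_2(\xi)\rho(\xi)^2$ and, by direct computation, $\rho(\xi)^2=n_1(\xi)\xi_1^2+n_2(\xi)\xi_2$. On $\mathscr S_0$, where $\mathcal{I}_{\pm2}$ and all the $T$'s act by simple multiplication of Fourier transforms that are smooth and supported away from the origin, these become the operator identities
$$(\partial/\partial x_1)^2=-4\pi^2T_{m_1}\mathcal{I}_{-2},\quad \partial/\partial x_2=2\pi i\,T_{m_2}\mathcal{I}_{-2},\quad \mathcal{I}_{-2}=-\frac{1}{4\pi^2}T_{n_1}(\partial/\partial x_1)^2+\frac{1}{2\pi i}T_{n_2}\,\partial/\partial x_2.$$
Together with the $M^p_w$ bounds from the first step, these immediately give $\|\mathcal{I}_{-2}(f)\|_{p,w}\simeq\|(\partial/\partial x_1)^2f\|_{p,w}+\|\partial/\partial x_2 f\|_{p,w}$ for $f\in\mathscr S_0$.

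The third step, which I expect to be the \emph{main obstacle}, is to pass from these clean $\mathscr S_0$ identities to general $f\in L^p_w$ with the derivatives read distributionally; the difficulty is that distributional derivatives are tested against all of $\mathscr S$, whereas $f=\mathcal{I}_2(g)$ is characterized only by pairings against $\mathscr S_0$, and the symbols $m_j$ are merely continuous (not smooth) at the origin, so one cannot substitute $(\partial/\partial x_j)\phi$ directly. For the forward implication I would use the regularization from the proof of Theorem \ref{thm4}: if $f\in W^{2,p}_w$ and $g=\mathcal{I}_{-2}(f)\in L^p_w$, then $f^{(\epsilon)}$ has frequency support away from the origin, the identity \eqref{pfthm4e3} gives $\mathcal{I}_{-2}^{(\epsilon/2)}f^{(\epsilon)}=g^{(\epsilon)}$, and the clean identity applied at the level of $f^{(\epsilon)}$ yields $(\partial/\partial x_1)^2f^{(\epsilon)}=-4\pi^2T_{m_1}g^{(\epsilon)}$ and $\partial/\partial x_2 f^{(\epsilon)}=2\pi i\,T_{m_2}g^{(\epsilon)}$. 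Since $f^{(\epsilon)}\to f$ in $L^p_w$ and hence in $\mathscr S'$, and differentiation is continuous on $\mathscr S'$, while $g^{(\epsilon)}\to g$ in $L^p_w$ forces $T_{m_j}g^{(\epsilon)}\to T_{m_j}g$ in $L^p_w$, I would let $\epsilon\to0$ to conclude that the distributional derivatives equal $-4\pi^2T_{m_1}g$ and $2\pi i\,T_{m_2}g$, both in $L^p_w$, with norms controlled by $C\|g\|_{p,w}=C\|\mathcal{I}_{-2}(f)\|_{p,w}$.

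For the converse I would argue constructively. Assuming $u_1:=(\partial/\partial x_1)^2f\in L^p_w$ and $u_2:=\partial/\partial x_2 f\in L^p_w$, set $h=-\frac{1}{4\pi^2}T_{n_1}u_1+\frac{1}{2\pi i}T_{n_2}u_2\in L^p_w$, which satisfies $\|h\|_{p,w}\le C(\|u_1\|_{p,w}+\|u_2\|_{p,w})$ by the $M^p_w$ bounds. It then remains to verify $\mathcal{I}_{-2}(f)=h$, i.e.\ $\int f\phi\,dx=\int h\,\mathcal{I}_2(\phi)\,dx$ for every $\phi\in\mathscr S_0$. Here I would move the multipliers onto the test function (the adjoint of $T_{n_j}$ is $T_{\tilde n_j}$ with $\tilde n_j(\xi)=n_j(-\xi)$, and $T_{\tilde n_j}\mathcal{I}_2(\phi)\in\mathscr S_0\subset\mathscr S$), then use the definition of the distributional derivatives of $f\in\mathscr S'$ to transfer the derivatives back onto the test functions; the evenness $\tilde n_1=n_1$, $\tilde n_2=-n_2$ together with the symbol identity $n_1\xi_1^2+n_2\xi_2=\rho(\xi)^2$ collapses the resulting symbol to $\hat\phi$, giving $\int h\,\mathcal{I}_2(\phi)\,dx=\int f\phi\,dx$. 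This shows $f\in W^{2,p}_w$ with $\|\mathcal{I}_{-2}(f)\|_{p,w}\le C(\|u_1\|_{p,w}+\|u_2\|_{p,w})$, and combining the two implications yields the asserted norm equivalence.
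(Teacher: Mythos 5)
Your proposal is correct, and its forward half coincides with the paper's: both identify the distributional derivatives of $f=\mathcal I_2(g)$ with $T_{m}g$ for the degree-$0$ homogeneous multipliers $m_1(\xi)=\xi_1^2/\rho^*(\xi)^2$ and $m_2(\xi)=\xi_2/\rho^*(\xi)^2$ (the paper's $k(\rho^*)^{-2}$, $\ell(\rho^*)^{-2}$ up to constants), invoke Lemma \ref{lem2} for their $L^p_w$-boundedness, and use the $f_{(m)}$, $f^{(\epsilon)}$ regularization together with the passage from $\mathscr S_0$-pairings to $\mathscr S'$-identities. The converse is where you genuinely diverge. The paper combines the two hypotheses into the single datum $\Theta+\Xi$, forms the one multiplier $N(\xi)=(-4\pi^2\xi_1^2-2\pi i\xi_2)/\rho^*(\xi)^2$, observes that its numerator has real part $-4\pi^2\xi_1^2$ and imaginary part $-2\pi\xi_2$ so that $N$ is non-vanishing off the origin, and then applies $T_{\widetilde N^{-1}}$ (again via Lemma \ref{lem2}) to recover $\mathcal I_{-2}(f)=T_{\widetilde N^{-1}}(\Theta+\Xi)$. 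You instead build a two-multiplier resolution $n_1(\xi)\xi_1^2+n_2(\xi)\xi_2=\rho^*(\xi)^2$ with $n_j$ having the strictly positive, degree-$4$ homogeneous denominator $\xi_1^4+\xi_2^2$, and reconstruct $\mathcal I_{-2}(f)$ as $-\tfrac{1}{4\pi^2}T_{n_1}u_1+\tfrac{1}{2\pi i}T_{n_2}u_2$; your parity bookkeeping $\tilde n_1=n_1$, $\tilde n_2=-n_2$ and the collapse of the symbol to $\hat\phi$ check out. The trade-off: the paper's route is shorter but leans on the somewhat fortuitous fact that the particular combination $(\partial/\partial x_1)^2-\partial/\partial x_2$ has an invertible symbol (one piece real, the other purely imaginary), while your resolution-of-the-symbol argument treats the derivatives separately, only needs the symbols to have no common zero off the origin, and is closer in spirit to the classical argument of \cite[Chap.~V]{St}, hence more readily generalizable to other collections of homogeneous differential operators. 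Both converses require the same routine care in transferring the adjoint identities from $\mathscr S$ to $L^p_w$ limits, which you flag and which the paper handles by the reference back to \eqref{e5.3}.
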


\begin{proof} 
Suppose that $f\in W_w^{2, p}$. Let $g=\mathcal{I}_{-2}(f) \in 
L^p_w$.  Then we have 
\begin{equation}\label{e5.1} 
\int f h \, dx= \int g \mathcal{I}_{2}(h) \, dx \quad 
\text{for all $h \in \mathscr S_0$.}   
\end{equation}  
Let $k(\xi)=-4\pi^2\xi_1^2$. 
Let $g_{m,\epsilon}=g_{(m)}*\mathscr F^{-1}(\zeta^{(\epsilon)})$ be as in Section \ref{s3}.  
Then by \eqref{e5.1}  we see that for $h \in \mathscr S_0$ 
\begin{align}\label{e5.2} 
&\int f (\partial/\partial x_1)^2h\, dx
=  \int g \mathcal{I}_{2}\left((\partial/\partial x_1)^2h\right) \, dx 
=\int g \mathcal{I}_{2}\left(T_k h\right) \, dx 
\\   \notag 
&=\lim_{\epsilon\to 0}\lim_{m\to \infty} \int g_{m,\epsilon} 
\mathcal{I}_{2}\left(T_k h\right) \, dx 
=\lim_{\epsilon\to 0}\lim_{m\to \infty} \int T_{k(\rho^*)^{-2}}
\left(g_{m,\epsilon}\right) h  \, dx. 
\end{align} 
Since  $k(\rho^*)^{-2}$ is homogeneous of degree $0$ with respect to 
$\delta_t^*$ and infinitely differentiable in $\Bbb R^2\setminus \{0\}$, 
by Lemma \ref{lem2} the multiplier operator $T_{k(\rho^*)^{-2}}$ is
 bounded on $L^p_w$. Thus 
$T_{k(\rho^*)^{-2}}\left(g_{m,\epsilon}\right)\to T_{k(\rho^*)^{-2}}
\left(g\right)$ in $L^p_w$ as $m\to \infty$, $\epsilon \to 0$ since 
$g_{m,\epsilon} \to g$ in $L^p_w$ as $m\to \infty$, $\epsilon \to 0$.  
Therefore, by \eqref{e5.2} we have  
\begin{equation} \label{e5.3} 
\int f (\partial/\partial x_1)^2h\, dx= \int 
T_{k(\rho^*)^{-2}}\left(g\right) h \, dx \quad h\in \mathscr S_0,  
\end{equation}  
which implies that 
\begin{equation}\label{e5.4} 
\int f (\partial/\partial x_1)^2\psi\, dx= \int 
T_{k(\rho^*)^{-2}}\left(g\right) \psi \, dx \quad \text{for all 
$\psi \in \mathscr S$.}  
\end{equation}  
It follows that 
\begin{equation}\label{e5.5} 
(\partial/\partial x_1)^2f= T_{k(\rho^*)^{-2}}\left(g\right) \quad 
\text{in $\mathscr S'$.}     
\end{equation}  
To see \eqref{e5.4}, substitute $\psi-
\mathscr F^{-1}(\varphi(\delta_\epsilon^{-1}\xi)\hat{\psi}(\xi))$ for $h$ in 
\eqref{e5.3}, where $\varphi$ is as in Lemma \ref{lem5}, and let 
$\epsilon \to 0$. 
 
\par 
Let $\ell(\xi)=2\pi i\xi_2$. Then, 
arguing similarly as above 
and noting that $\ell(\rho^*)^{-2}$ is homogeneous of degree 
$0$ with respect to 
$\delta_t^*$ and infinitely differentiable in $\Bbb R^2\setminus \{0\}$, 
we see that $T_{\ell(\rho^*)^{-2}}\left(g\right)\in L^p_w$ and
\begin{equation*}
- \int f \partial/\partial x_2\psi\, dx= \int 
T_{\ell(\rho^*)^{-2}}\left(g\right) \psi \, dx \quad \text{for all 
$\psi \in \mathscr S$,}  
\end{equation*}  
which implies that 
\begin{equation}\label{e5.6} 
\partial/\partial x_2 f= T_{\ell(\rho^*)^{-2}}\left(g\right) \quad 
\text{in $\mathscr S'$.}     
\end{equation}  
Combining \eqref{e5.5} and \eqref{e5.6}, we have 
\begin{equation} \label{e5.7} 
\|(\partial/\partial x_1)^2 f\|_{p,w} + \|\partial/\partial x_2 f\|_{p,w}
\leq C\|g\|_{p,w}=C\|\mathcal{I}_{-2}(f)\|_{p,w}.  
\end{equation} 
\par 
Conversely, suppose that $(\partial/\partial x_1)^2 f=:\Theta \in L^p_w$ and  
$\partial/\partial x_2 f=: \Xi \in L^p_w$. Then, for $h\in \mathscr S_0$ 
we have 
\begin{equation*} 
\int f(\partial/\partial x_1)^2h \, dx =\int \Theta h \, dx, 
\quad 
-\int f\partial/\partial x_2 h \, dx =\int \Xi h \, dx,  
\end{equation*} 
and hence 
\begin{equation} \label{e5.8} 
\int f (T_k h -T_\ell h) \, dx =
\int f\left((\partial/\partial x_1)^2 h -\partial/\partial x_2 h \right)\, dx 
=\int (\Theta +\Xi)h \, dx,    
\end{equation} 
where $k(\xi)$ and $\ell(\xi)$ are as above. 
Let 
$$N(\xi)=\frac{k(\xi)-\ell(\xi)}{\rho^*(\xi)^2}
=\frac{-4\pi^2\xi_1^2 - 2\pi i\xi_2}{\rho^*(\xi)^2}.  
$$ 
Then, substituting $\mathcal{I}_{2}(h)$ for $h$ in \eqref{e5.8}, we have 
\begin{equation} \label{e5.9} 
\int f T_N h \, dx =\int (\Theta +\Xi)\mathcal{I}_{2}(h) \, dx.   
\end{equation} 
We note that the functions $N$ and $\widetilde{N}^{-1}$ are 
homogeneous of degree $0$ with respect to $\delta_t^*$ and 
infinitely differentiable in $\Bbb R^2\setminus \{0\}$, 
where $\widetilde{N}(\xi)=N(-\xi)$. 
 So,  $T_{\widetilde{N}^{-1}}$ is bounded on $L^p_w$ by 
Lemma \ref{lem2}.   Substituting $T_{N^{-1}}h$ for $h$ in \eqref{e5.9}, 
we have 
\begin{equation} \label{e5.10} 
\int f h \, dx =\int (\Theta +\Xi)T_{N^{-1}}\left(\mathcal{I}_{2}(h)\right)  
\, dx 
=\int T_{\widetilde{N}^{-1}}(\Theta +\Xi)\mathcal{I}_{2}(h) 
\, dx,      
\end{equation} 
where the last equality follows as \eqref{e5.3}, since 
$T_{\widetilde{N}^{-1}}$ is bounded on $L^p_w$.  
  By \eqref{e5.10} we see that 
$f\in W^{2,p}_w$ and 
$$\mathcal{I}_{-2}(f)=T_{\widetilde{N}^{-1}}(\Theta +\Xi) $$ 
and 
$$ 
\|\mathcal{I}_{-2}(f)\|_{p,w}\leq C\|\Theta\|_{p,w} +C\|\Xi\|_{p,w} 
=C\|(\partial/\partial x_1)^2 f\|_{p,w}+ C\|\partial/\partial x_2 f\|_{p,w},  
$$  
which combined with \eqref{e5.7} completes the proof of the theorem. 
\end{proof}

We conclude this note with two remarks. 

\begin{remark} \label{s3r1} 
To characterize the Sobolev spaces $W^{\alpha,p}$ (unweighted spaces) 
we can also apply the square 
functions of the Lusin area integral type instead of the Littlewood-Paley 
function type (see \cite{SWYY}). In \cite{Sajf}, certain Sobolev spaces 
($H^1$ Sobolev spaces) were characterized by using certain square functions of 
the Lusin area integral type. 
 The characterization of those Sobolev spaces 
by square functions of the Littlewood-Paley function type analogous to Theorem 
$\ref{thm4}$ is yet to be proved.   
\end{remark}

\begin{remark} \label{s3r2}  
Let us consider another square function of Marcinkiewicz type: 
\begin{equation*}
D_\alpha(f)(x)=\left(\int_{\Bbb R^n}|I_\alpha(f)(x+y)- I_\alpha(f)(x)|^2 
|y|^{-n-2\alpha}\, dy\right)^{1/2},     
\end{equation*} 
where $I_\alpha$ is as in \eqref{1.04}. 
Let $0<\alpha<1$ and $p_0=2n/(n+2\alpha)>1$.  Then it is known that 
the operator $D_\alpha$ is bounded on $L^p(\Bbb R^n)$ if $p_0<p <\infty$ 
(\cite{St2}) and that $D_\alpha$ is of weak type $(p_0,p_0)$ (\cite{F}). 
In \cite{Sascan} analogues of these results were established in the case of 
dilations $\delta_t=t^P$ when $P$ is diagonal.  
\end{remark}


\begin{thebibliography}{99} 


\bibitem{AMV} 
R. Alabern, J. Mateu and J. Verdera, {\it A new characterization of Sobolev 
spaces on $\Bbb R^n$},  Math. Ann. {\bf 354} (2012), 589--626.  


\bibitem{C} A. P. Calder\'on,  
 {\it Inequalities for the maximal function relative to a metric},   
Studia Math.  {\bf 57} (1976),  297--306.    

\bibitem{CT}
A.~P.~Calder\'on and A. Torchinsky, {\it Parabolic maximal functions associated with a distribution}, Advances in Math. {\bf 16} (1975), 1--64. 

\bibitem{CT2}
A.~P.~Calder\'on and A. Torchinsky, {\it Parabolic maximal functions 
associated with a distribution. II}, Advances in Math. {\bf 24} (1977), 
101--171. 



\bibitem{Ca} O. N. Capri, {\it On an inequality in the theory of parabolic $H^p$ spaces},  Revista de la Uni\'{o}n Matem\'{a}tica Argentina {\bf 32} (1985), 
17--28.  


\bibitem{CF}  R. R. Coifman and C. Fefferman, 
{\it Weighted norm inequalities for maximal functions and singular 
integrals}, Studia Math. {\bf 51} (1974), 241--250.  

\bibitem{FR} E. B. Fabes and  N. Rivi\`{e}re,  
{\it Singular integrals with mixed homogeneity},  
 Studia Math. {\bf 27}  (1966),  19--38.   

\bibitem{F}
C.~Fefferman, 
{\it Inequalities for strongly singular convolution operators}, 
Acta Math.\ {\bf 124} (1970), 9--36.  



 \bibitem{GR} J. Garcia-Cuerva and J. L. Rubio de Francia,  
 {\it Weighted Norm Inequalities and Related Topics}, 
1985,  North-Holland. 

\bibitem{HL}  
 P. Haj{\l}asz, Z. Liu, {\it A Marcinkiewicz integral type characterization of 
 the Sobolev space}, Publ. Mat. {\bf 61} (2017), 83--104.  


\bibitem{H}
 L.~H\"ormander,  
{\it Estimates for translation invariant operators in $L^p$ spaces},  
 Acta Math.\  {\bf 104}  (1960),    93--139.   
 
 \bibitem{KW} D. S. Kurz and R. L. Wheeden, 
 {\it Results on weighted norm inequalities for multipliers}, Trans. Amer. 
 math. Soc. {\bf 255} (1979), 343--362. 


 \bibitem{LA} P. D. Lax,  
 {\it  Linear Algebra}, 
1997,  John Wiley \& Sons, Inc. 

 
\bibitem{M} 
J. Marcinkiewicz,  
{\it Sur quelques int\'{e}grales du type de Dini}, 
 Annales de la Soci\'et\'e Polonaise
 {\bf 17} (1938),  42--50.  

\bibitem{RN} F. Riesz and B\'{e}la Sz.-Nagy, 
{\it Functional Analysis}, Frederick Ungar Publishing 
Company, New York, 1955. 



\bibitem{Ri} N. Rivi\`{e}re, 
{\it Singular integrals and multiplier operators},  
 Ark. Mat.   {\bf 9} (1971),    243--278.  
 

\bibitem{Sa} 
 S. Sato,    
{\it Remarks on square functions in the Littlewood-Paley theory},  
 Bull.\ Austral.\ Math.\ Soc.\  {\bf 58}  (1998),   199--211.  


  \bibitem{Sa3} S. Sato, 
{\it Estimates for  singular integrals along surfaces of revolution},  
J. Aust. Math. Soc. {\bf 86} (2009), 
413--430. 


\bibitem{Sa2} S. Sato, {\it Littlewood-Paley operators and Sobolev spaces}, 
 Illinois J. Math. {\bf 58} (2014), 1025--1039.


\bibitem{Sa5} S. Sato, 
{\it Square functions related to integral of Marcinkiewicz  and 
Sobolev spaces},  Linear and Nonlinear Analysis {\bf 2} (2016), 
237--252 (special issue on ISBFS 2015). 

\bibitem{Sa6} S. Sato,  
 {\it Spherical square functions of Marcinkiewicz type with Riesz  
 potentials},  Arch. Math. {\bf 108} (2017), 415--426.  


\bibitem{Sa4} S. Sato, {\it Littlewood-Paley equivalence and homogeneous 
Fourier multipliers}, 
Integr. Equ. Oper. Theory {\bf 87} (2017), 15--44. 



\bibitem{Sajf} 
S. Sato, 
{\it Characterization of $H^1$ Sobolev spaces by 
square functions of Marcinkiewicz type}, 
 J Fourier Anal Appl {\bf 25} (3), 842--873 (2019), 
 https://doi.org/10.1007/s00041-018-9618-2. 


\bibitem{Sascan} S. Sato, {\it Weak type estimates for functions of 
Marcinkiewicz type with fractional integrals of mixed homogeneity}, 
 Mathematica Scandinavica, {\bf 125} (1) (2019), 
135--162, https://doi.org/10.7146/math.scand.a-114725.  


\bibitem{Sacze} S. Sato, 
{\it Boundedness of Littlewood-Paley operators relative to non-isotropic 
dilations}, Czech. Math. J. {\bf 69} (2019), 337--351.  

\bibitem{SWYY}
S. Sato, F. Wang, D. Yang and W. Yuan,  
{\it Generalized Littlewood-Paley characterizations of 
fractional Sobolev spaces}, Commun. Contemp. Math. {\bf 20} 
1750077 (2018),  
 https://doi.org/10.1142/S0219199717500778. 


 \bibitem{St2}
  E.~M.~Stein, {\it The characterization of functions arising as potentials},  
Bull. Amer. Math. Soc.  {\bf 67} (1961),  102--104.   



\bibitem{St} E. M. Stein,  
{\it Singular Integrals and Differentiability Properties of Functions}, 
1970, Princeton Univ. Press.   

\bibitem{SW}  E.~M.~Stein and S.~Wainger,  
 {\it Problems in harmonic analysis related to curvature},   
 Bull.~Amer.~Math.~Soc.  {\bf 84}  (1978),    1239--1295. 

\bibitem{ST}
  J.~-O.~Str\"{o}mberg and A.~Torchinsky, 
  Weighted Hardy Spaces,   
  Lecture Notes in Math.  1381,  Springer-Verlag,  
 Berlin Heidelberg New York London Paris Tokyo  Hong Kong, 1989.    


\bibitem{Wa} 
 D. Waterman,    
{\it On an integral of Marcinkiewicz},  
Trans. Amer. Math. Soc. {\bf 91} (1959),  129--138. 

\bibitem{Z} 
 A. Zygmund,    {\it On certain integrals},   Trans. Amer. Math. Soc. 
 {\bf 58} (1944),  170--204.   
 







\end{thebibliography}
\end{document}